\newcommand{\ubar}[1]{\underaccent{\bar}{#1}}
\theoremstyle{plain}
\newtheorem{theorem}{Theorem}
\newtheorem{proposition}{Proposition}[section]
\newtheorem{lemma}[proposition]{Lemma}
\newtheorem{remark}[proposition]{Remark}
\theoremstyle{problem}
\newtheorem {problem}{Problem}
\numberwithin{equation}{section}
\def\Xint#1{\mathchoice
	{\XXint\displaystyle\textstyle{#1}}
	{\XXint\textstyle\scriptstyle{#1}}
	{\XXint\scriptstyle\scriptscriptstyle{#1}}
	{\XXint\scriptscriptstyle\scriptscriptstyle{#1}}
	\!\int}
\def\XXint#1#2#3{{\setbox0=\hbox{$#1{#2#3}{\int}$ }
		\vcenter{\hbox{$#2#3$ }}\kern-.6\wd0}}
\def\dashint{\Xint-}
\def\MH{{\mu,R}}
\def\a{\alpha}
\def\b{\beta}
\def\c{\cdot}
\def\d{\delta}
\def\g{\gamma}
\def\G{\Gamma}
\def\k{\kappa}
\def\K{\mathcal{K}}
\def\ld{\lambda}
\def\Ld{\Lambda}
\def\n{\nabla}
\def\o{\omega}
\def\O{\Omega}
\def\ov{\overline}
\def\p{\partial}
\def\q{\quad}
\def\r{\rho}
\def\ul{\ubar}
\def\v{\epsilon}
\def\B{\mathcal{B}}
\def\MO{\mathcal{O}}
\def\mn{\mathbf{n}}
\def\mp{\mathbf{p}}
\def\MG{\mathcal{G}}
\def\M{\mathfrak{M}}
\def\N{\mathcal{N}}
\def\R{\mathbb{R}}
\def\S{\mathcal{S}}
\def\t{\mathfrak{t}}
\def\u{\mathbf{u}}
\def\h{\mathfrak{h}}
\def\ur{\Upsilon}
\def\mcG{{\mathcal G}}
\def\bp{{\mathbf p}}
\begin{document}
\title[two-dimensional jet flows for full Euler system]{\bf Two-dimensional jet flows for compressible full Euler system with general vorticity}
\author{Yan Li}
\address{School of Mathematical Sciences, Shanghai Jiao Tong University, 800 Dongchuan Road, Shanghai, 200240, China}
\email{liyanly@sjtu.edu.cn}

\begin{abstract}
	In this paper, we consider the well-posedness theory of two-dimensional compressible subsonic jet flows for steady full Euler system with general vorticity. Inspired by the analysis in \cite{LSTX2023}, we show that the stream function formulation for such system admits a variational structure. Then the existence and uniqueness of a  smooth subsonic jet flow can be established by the variational method developed by Alt, Caffarelli and Friedman. Furthermore, the far fields behavior of the flow and the existence of a critical upstream pressure are also obtained. 
\end{abstract}
		
\keywords{free boundary, full Euler system, jet, subsonic flows, vorticity.}
\subjclass[2010]{
	35Q31, 35R35, 35J20, 35J70, 35M32, 76N10}

\thanks{Updated on \today}
		
\maketitle
		
\section{Introduction and main results}
Steady compressible subsonic flows in infinitely long nozzles have been studied extensively. In \cite{MR0096477} Bers first asserted that there exists a unique subsonic irrotational flow in an infinitely long nozzle if the mass flux is less than a critical value. The rigorous mathematical proof of this assertion was given in \cite{MR2375709} for two-dimensional flows, in \cite{MR2644144} for three-dimensional axisymmetric flows, and in \cite{MR2824469} for arbitrary multi-dimensional flows. As for rotational flows (with non-zero vorticity), the well-posedness and critical mass flux of two-dimensional subsonic flows were obtained in \cite{MR2607929} by using stream function formulations. This method also works for non-isentropic flows (\cite{MR3023398,MR3017027,MR3914482}), three-dimensional axisymmetric flows (\cite{MR3017027,MR3537905}), flows with large vorticity (\cite{MR3196988,MR3914482}), flows in periodic nozzles (\cite{MR2879733}) and non-smooth nozzles (\cite{MR3283559}) and so on.

Compared with flows in fixed nozzles, jet flows are usually more difficult to analyze, since free boundaries appear when the flows  issue from the nozzles. Early researches on such free boundary problems relied on hodograph tranformation and comformal mappings, which were restricted to two-dimensional flows and special nozzles, see \cite{MR0088230,MR0119655}. Alt, Caffarelli and Friedman developed a new method to solve two-dimensional and three-dimensional axisymmetric jet problems for irrotational
flows (\cite{MR637494,MR682265,MR772122}) by investigating the associated Lagrangian functionals (\cite{MR618549,MR752578}). This variational approach turns out to be very powerful to deal with different kinds of free boundary problems, cf. \cite{MR679313,MR3670047,MR3814594,MR4127792,DW_vorticity,DT_finite,DH_two,CD2020} and references therein. More results about subsonic-sonic jet flows and jet flows in bounded domains can be found in \cite{MR3048597,MR3934109} and references therein.

Based on the variational method mentioned above, the well-posedness theory of two-dimensional isentropic subsonic jet flows with general incoming flows  was  established in \cite{LSTX2023}. It was shown that the jet problem for isentropic flows enjoys a variational structure even when the vorticity is non-zero, so that the framework developed by Alt, Caffarelli and Friedman can be applied to obtain the existence and uniqueness of smooth solutions to the subsonic jet problem. Furthermore, a critical mass flux of the flows was obtained in the sense that as long as the incoming mass flux is larger than the critical value, the well-posedness theory holds true. It is natural to ask whether this variational method can be adapted to study the non-isentropic jet flows. In this paper, we give a positive answer and establish the well-posedness of two-dimensional non-isentropic subsonic jet flows with general vorticity.

Two-dimensional steady compressible jet flows satisfy the following full Euler system
\begin{eqnarray}\label{Euler}
	\left\{ \begin{array}{llll}
		\displaystyle  \n\cdot(\r \mathbf{u})=0,\\
		\displaystyle \n \cdot(\r \mathbf{u}\otimes \mathbf{u})+\n P=0,\\
		\displaystyle \n\cdot(\r \u E+P\u)=0
	\end{array}
	\right.
\end{eqnarray}
in the flow region,
where $\u=(u_1,u_2)$ denotes the velocity, $\r$ is the density, $P$ is the pressure, and $E$ is the total energy of the flow. For the polytropic gas, one has
$$E=\frac{|\u|^2}{2}+\frac{P}{(\g-1)\r},$$
where the constant $\gamma>1$ is called the adiabatic exponent. The local sound speed and the Mach number of the flow are defined as
\begin{equation}\label{C and M}
	c=\sqrt{\frac{\g P}{\r}} \quad {\rm and} \quad M=\frac{|\u|}{c},
\end{equation}
respectively. The flow is called subsonic if $M<1$, sonic if $M=1$, and supersonic if $M>1$, respectively.

Now consider a nozzle in $\R^2$ bounded by two solid boundaries, which is symmetric about the $x_1$-axis. Denote
\begin{equation}\label{nozzle}
	\N_0:=\{(x_1,0): x_1\in \R\} \q{\rm and} \q \N_1:=\{(x_1,x_2): x_1=\Theta(x_2)\}
\end{equation}
as the symmetry axis and the solid upper boundary of the nozzle, respectively. Here $\Theta\in C^{1,\bar\a}([1,\bar{H}])$ for some given $\bar{H}>1$ and $\bar\a\in(0,1)$, moreover, it satisfies
\begin{equation}\label{nozzleb}
	\Theta(1)=0 \q {\rm and} \q \lim_{x_2\to\bar{H}-}\Theta(x_2)=-\infty,
\end{equation}
i.e., the outlet of the nozzle is at $A:=(0,1)$ and the nozzle is asymptotically horizontal with height $\bar{H}$ at upstream $x_1\to-\infty$. In order to study the uniqueness of solution, we further assume that there exists an $h_*\in[1,\bar H)$ such that the nozzle is monotone decreasing for $x_2\in(h_*,\bar H)$, i.e.,
\begin{equation}\label{nozzle3}
\Theta'(x_2)\leq 0 \quad\text{for } x_2\in (h_*,\bar H).
\end{equation}

The main goal of this paper is to study the following jet problem.

\begin{problem}\label{problem1}
Given a density $\bar \rho=\bar \rho(x_2)$,
a horizontal velocity $\bar u=\bar u(x_2)$ and a constant pressure $\bar P>0$ of the flow at the upstream, find $(\r, \u, P)$, the free boundary $\G$, and the  outer pressure $\ul{P}$ which we assume to be a positive constant, such that the following statements hold:
\begin{enumerate}
  \item[\rm (i)] The free boundary $\G$ joins the outlet of the nozzle as a continuous differentiable curve and tends asymptotically horizontal at downstream as $x_1\to\infty$.

  \item[\rm (ii)] The solution $(\r,\u,P)$ solves the Euler system \eqref{Euler} in the flow region $\mathcal{O}$ bounded by $\N_0$, $\N_1$ and $\G$. It
  takes the incoming data at the upstream, i.e.,
  \begin{equation}\label{upstream condition}
  \rho(x_1,x_2)\to\bar{\rho}(x_2),\q u_1(x_1,x_2)\to\bar{u}(x_2),\q
  P(x_1,x_2)\to\bar{P},
  \q \text{as } x_1\to-\infty.
  \end{equation}
  Moreover, it satisfies the boundary conditions
  \begin{equation}\label{boundary condition}
  P=\ul{P} \  \text{on } \G \q\text{and} \q \u\c\mn=0 \  \text{on } \N_1\cup \G,
  \end{equation}
  where $\mn$ is the unit normal along $\N_1\cup\G$.
\end{enumerate}
\end{problem}

The main results in this paper can be stated as follows.

\begin{theorem}\label{result}
Let the nozzle boundary $\N_1$ defined in \eqref{nozzle} satisfy \eqref{nozzleb}-\eqref{nozzle3} and $\bar P>0$ be a constant. Given a density $\bar\rho\in C^{1,1}([0,\bar{H}])$ and a horizontal velocity $\bar{u}\in C^{1,1}([0,\bar{H}])$ of the flow at the upstream. Suppose
\begin{equation}\label{thm_condition1}\begin{split}
&\bar\rho_*:=\inf_{x_2\in[0,\bar H]} \bar \rho(x_2)>0,
\q \bar{\rho}'(0)=0, \q \bar{\rho}'(\bar H)=0,\\
&\bar u_*:=\inf_{x_2\in[0,\bar H]}\bar u(x_2)>0,
\q \bar{u}'(0)=0,\q \bar{u}'(\bar{H})\geq0.
\end{split}
\end{equation}
There exist constants $\bar\k,\,\bar P_c>0$ depending on $\bar \rho_*$, $\bar u$, $\gamma$ and the nozzle, such that the following statements hold:
\begin{enumerate}
  \item[\rm (i)] (Existence and properties of solutions) For any pressure $\bar P>\bar P_c$ at the upstream, if
   \begin{equation}\label{thm_condition2}
  	\bar P\|\bar\rho'\|_{C^{0,1}([0,\bar H])}<\bar\k,
  \end{equation}
  then there exist functions  $\rho,\mathbf{u},P\in C^{1,\alpha}(\mathcal{O})\cap C^0(\overline{\mathcal{O}})$ (for any $\alpha\in (0,1)$) where $\mathcal O$ is the flow region, the free boundary $\Gamma$, and the outer pressure $\ul{P}$ such that $(\r,\u,P,\G,\ubar P)$ solves Problem {\ref{problem1}}. Furthermore, the following properties hold:
  \begin{enumerate} 
  	\item  (Smooth fit) The free boundary $\G$ joins the outlet of the nozzle as a $C^1$ curve.
  	
  	\item  The free boundary $\Gamma$ is given by a graph $x_1=\Upsilon(x_2)$, $x_2\in (\ubar H, 1]$, where $\Upsilon$ is a $C^{2,\alpha}$ function, $\ubar H\in(0,1)$, and $\lim_{x_2\rightarrow \ubar H+} \Upsilon(x_2)=\infty$. For $x_1$ sufficiently large, the free boundary  can also be written as $x_2=f(x_1)$, where $f$ is a  $C^{2,\alpha}$ function and satisfies
  	$$\lim_{x_1\rightarrow \infty}f(x_1)=\ubar H
  	\quad\text{and}\quad  \lim_{x_1\rightarrow \infty}f'(x_1)=0.$$
  	
  	\item The flow is globally uniformly subsonic and has negative vertical velocity in the flow region $\MO$, i.e.,
  	\begin{equation*}
  		\sup_{\overline{\MO}}\frac{|\u|^2}{c^2}<1 \q\text{and}\q u_2<0 \ \text{in }\MO.
  	\end{equation*}
 
  	\item (Upstream and downstream asymptotics) The flow satisfies the asymptotic behavior
  	\begin{equation}\label{asymptotic rup}
  		\begin{split}
  			\|(\r,u_1,u_2,P)(x_1,\c)-(\bar{\r}(\c),\bar{u}(\c),0,\bar{P})\|_{C^{1,\alpha}([0,\bar{H}))}\to0 \q\text{as } x_1\to-\infty
  	\end{split}\end{equation}
  	and
  	\begin{equation}\label{asymptotic downstream}
  		\begin{split}
  			\|(\r,u_1,u_2,P)(x_1,\c)-(\ul{\r}(\c),\ul{u}(\c),0,\ul{P})\|_{C^{1,\alpha}([0,\ul{H}))}\to0 \q\text{as } x_1\to\infty,
  	\end{split} \end{equation}
  	where $(\ul{\r},\ul{u})\in (C^{1,\alpha}([0,\ul{H}]))^2$ are positive functions. Moreover, the downstream asymptotics $\ul{\r}$, $\ul{u}$, $\ul{P}$ and $\ul{H}$ are uniquely determined by $\bar \rho$, $\bar{u}$, $\bar{P}$, $\gamma$, and $\bar H$.
  \end{enumerate}

  \item[\rm (ii)] (Uniqueness) The Euler flow which satisfies all properties in part (i) is unique.

  \item[\rm (iii)] (Critical pressure) $\bar P_c$ is the critical pressure at the upstream for the existence of subsonic jet flows in the following sense: either
  \begin{equation*}
  \sup_{\overline{\MO}}\frac{|\u|^2}{c^2}\to1 \q\text{as } \bar P\to \bar P_c+,
  \end{equation*}
  or there is no $\sigma>0$ such that for all $\bar P\in(\bar P_c-\sigma, \bar P_c)$, there are Euler flows satisfying all properties in part (i) and
  \begin{equation*}
  \sup_{\bar P\in(\bar P_c-\sigma,\bar P_c)}\sup_{\overline{\MO}}\frac{|\u|^2}{c^2}<1.
  \end{equation*}
\end{enumerate}
\end{theorem}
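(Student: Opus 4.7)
The plan is to extend the Alt--Caffarelli--Friedman (ACF) variational approach, as carried out in \cite{LSTX2023} for the isentropic case, to the full Euler system. First I would introduce a stream function $\psi$ via $\p_{x_2}\psi=\r u_1$ and $\p_{x_1}\psi=-\r u_2$, so that mass conservation holds automatically. For steady solutions of \eqref{Euler}, both the Bernoulli quantity $B=\tfrac{|\u|^2}{2}+\tfrac{\g P}{(\g-1)\r}$ and the entropy $S=P/\r^{\g}$ are constant along streamlines, so that $B=B(\psi)$ and $S=S(\psi)$ are uniquely pinned down by the upstream data $(\bar\r,\bar u,\bar P)$. Solving the Bernoulli relation for $\r=\r(|\n\psi|^2,\psi)$ in the subsonic regime and using Crocco's identity to rewrite the momentum equations, the entire system \eqref{Euler} reduces to a single quasilinear PDE
\begin{equation*}
\n\cdot\!\left(\frac{\n\psi}{\r(|\n\psi|^2,\psi)}\right)=g(|\n\psi|^2,\psi),
\end{equation*}
with $g$ built from $B'(\psi)$, $S'(\psi)$ and $\r$. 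Mimicking the computation in \cite{LSTX2023}, the first substantial step is to verify that this non-isentropic equation is still the Euler--Lagrange equation of a Lagrangian $\Phi(|\n\psi|^2,\psi)$ in the subsonic region.

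With this variational structure in hand, I would truncate the constitutive relation so that the functional is uniformly convex in $\n\psi$ and work on large bounded approximating domains $D_\mu$ obtained by cutting the nozzle at $x_1=\pm\mu$. On $D_\mu$ I would minimize
\begin{equation*}
J_\mu(\psi)=\int_{D_\mu}\bigl(\Phi_{\rm tr}(|\n\psi|^2,\psi)+\ld^2\chi_{\{\psi<Q\}}\bigr)\,dx,
\end{equation*}
subject to Dirichlet data equal to the prescribed stream function $\int_0^{x_2}\bar\r\bar u\,ds$ on the upstream face, equal to the total mass flux $Q=\int_0^{\bar H}\bar\r\bar u\,ds$ on $\N_1$, and equal to $0$ on $\N_0$. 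The constant $\ld$ is chosen so that the one-phase free boundary condition produced by ACF reproduces the physical condition $P=\ul P$ on $\G$; since $\G$ coincides with the streamline $\{\psi=Q\}$, Bernoulli pins down $|\n\psi|$ on $\G$ in terms of $\ul P$ and thereby fixes $\ld$. The ACF theorems then yield a Lipschitz minimizer $\psi_\mu$ satisfying the truncated stream equation in $\{\psi_\mu<Q\}$ and a $C^{2,\a}$ free boundary $\G_\mu$ away from degeneracies.

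Next, I would remove the truncations. The limit $\mu\to\infty$ is handled by uniform Lipschitz bounds and non-degeneracy estimates from ACF combined with standard interior elliptic theory. The crux is to show that the subsonic cutoff is inactive everywhere: the hypothesis $\bar P>\bar P_c$ supplies the gross bound on $|\u|/c$, while the smallness \eqref{thm_condition2} controls the new entropy contribution (roughly $S'(\psi)\sim\bar P\bar\r'/\bar\r^{\g+1}$), absent in the isentropic problem, permitting a maximum-principle argument on a modified Bernoulli quantity. The monotonicity \eqref{nozzle3} is then exploited to propagate $u_2<0$ and to prove smooth fit at $A=(0,1)$ by Hopf comparison on $\p_{x_1}\psi$ between the nozzle wall $\N_1$ and the free boundary $\G$. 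The downstream asymptotic quadruple $(\ul\r,\ul u,\ul P,\ul H)$ is uniquely determined by matching the functions $B$, $S$ together with the conserved mass flux $Q$ and the pressure condition on $\G$, which gives \eqref{asymptotic rup}--\eqref{asymptotic downstream}.

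For part (ii), uniqueness follows from a stream-function comparison between two candidate solutions together with a horizontal sliding argument that uses \eqref{nozzle3} to force the free boundaries to coincide. For part (iii), defining $\bar P_c$ as the infimum of upstream pressures admitting uniformly subsonic solutions with the properties in (i), the dichotomy is obtained by compactness: if uniform subsonicity persisted across $\bar P_c$, the implicit function theorem would permit continuation past $\bar P_c$ and contradict the definition of infimum. The principal obstacle throughout is the very first step--identifying the correct Lagrangian so that the full-Euler stream equation is variational and the ACF one-phase condition corresponds to $P=\ul P$ in the presence of a nontrivial entropy function $S(\psi)$--and it is precisely to render the attendant perturbative estimates tractable that the smallness hypothesis \eqref{thm_condition2} is imposed.
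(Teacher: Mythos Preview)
Your outline is correct and matches the paper's strategy essentially step for step: reduce to a single quasilinear stream-function equation via the transported invariants $\mathcal B(\psi),\mathcal S(\psi)$, exhibit the variational structure (the paper's Lemma~\ref{EL} and Proposition~\ref{Gproperties}), run the ACF machinery on truncated domains with a subsonic cutoff, pass to the limit, and then argue the critical-pressure dichotomy by compactness. One small correction in your attribution of hypotheses: the monotonicity $\partial_{x_1}\psi\ge 0$ (hence $u_2\le 0$) and the smooth fit are obtained in the paper from a specially designed sub/supersolution boundary datum on the truncated domain (cf.~\eqref{eq:bdry_datum}), not from the nozzle condition \eqref{nozzle3}, which is invoked only for uniqueness and for the critical-pressure argument (see Remark~1.1).
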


\begin{remark}
The condition \eqref{nozzle3} of the nozzle boundary $\N_1$ at far field is only used for the uniqueness of the solution and the existence of the critical pressure.
\end{remark}

\begin{remark}
 The conditions of $\bar u$ in Theorem \ref{result} are the same as that in \cite[Theorem 1]{LSTX2023}, where the density $\bar\rho$ is a positive constant.
\end{remark}

\begin{remark}\label{rmk_p_constant}
		Suppose the flow satisfies the asymptotic behavior with high order compatibility at the downstream, i.e.,
	\begin{align*}
		(\rho(x_1,x_2),u_1(x_1,x_2),u_2(x_1,x_2),P(x_1,x_2))\to(\ubar\rho(x_2),\ubar u_1(x_2),\ubar u_2(x_2),\ubar P(x_2))
	\end{align*}
	and
	\begin{equation*}
		\n(\rho(x_1,x_2),u_1(x_1,x_2),u_2(x_1,x_2),P(x_1,x_2))\to\n(\ubar\rho(x_2),\ubar u_1(x_2),\ubar u_2(x_2),\ubar P(x_2))
	\end{equation*}
	for some smooth functions $(\ubar\rho,\ubar u_1,\ubar u_2,\ubar P)$ as $x_1\to\infty$. It follows from the Euler system \eqref{Euler} that
	\begin{equation}\label{eq_0}
		(\ubar\rho\ubar u_2)'=0,\q (\ubar\r \ubar u_1\ubar u_2)'=0,\q (\ubar \r \ubar u_2^2)'+\ubar P'=0,\q  \left(\ubar\r \ubar u_2\left(\frac{\ubar u_1^2+\ubar u_2^2}{2}+\frac{\g \ubar P}{(\g-1)\ubar\r}\right)\right)'=0.
	\end{equation}
	The first equation in \eqref{eq_0} shows that $\ubar\rho \ubar u_2\equiv C$ for some constant $C$. Since $\ubar\r \ubar u_2=0$ on the $x_1$-axis (since the flow is symmetric about the $x_1$-axis) implies $C=0$ and since the solution is supposed to satisfy $\ubar\rho>0$, one has $\ubar u_2=0$. Then it follows from the third equation in \eqref{eq_0} that $\ubar P'=0$.
	This means that the downstream pressure $\ubar P$ is a constant. Consequently, in view of \eqref{entropy}, the downstream density $\ubar\rho$ of a non-isentropic flow is a function. Same arguments hold for the upstream flow states.
\end{remark}

To prove Theorem \ref{result}, we first establish the existence of solutions to the jet problem with sufficiently large pressure at the upstream. We derive an equivalent formulation of the full Euler system in terms of the stream function, which is a second order quasilinear equation with an inhomogeneous term coming from the vorticity and the entropy. Moreover, the equation is elliptic in the subsonic region and degenerate when the flow approaches the sonic state. To deal with this possible degeneracy, we use a subsonic truncation to get a uniformly elliptic equation. A domain truncation is also used to avoid the difficulties caused by the unboundedness of the domain.
Then we reformulate the non-isentropic jet problem for compressible Euler flows with non-zero vorticity into a variational problem, which has exactly the same form as in the isentropic case (cf. \eqref{variation problem}-\eqref{K} below and (3.3)-(3.5) in \cite{LSTX2023}). Thus the existence and regularity of solutions to the variational problem can be obtained by using the same arguments as in \cite{LSTX2023}. Some fine properties of the solutions are also obtained. Then we remove the domain and subsonic truncations to get the desired solution to the original jet problem. Furthermore, the uniqueness of the solution to the jet problem and the existence of a critical upstream pressure are established.

This paper is organized as follows. In Section \ref{stream formulation and subsonic truncation}, we reformulate the jet problem for the full Euler flows in terms of the stream function formulations, and introduce the subsonic truncation. In Section \ref{variational formulation for the free boundary problem}, we use domain truncations and give the variational formulation for the truncated problems. The existence, regularity and fine properties of solutions to the truncated problems are investigated in Section \ref{sec:truncated problem}, where the continuous fit and smooth fit of the free boundary are also established. In Section \ref{sec:limit_solution}, we remove the domain and subsonic truncations, then we obtain the existence and uniqueness of the subsonic solution to the jet problem. The existence of the critical pressure is given in Section \ref{Existence of critical flux}.

In the rest of the paper, for convention the repeated indices mean the summation.

\section{Stream function formulation and subsonic truncation}\label{stream formulation and subsonic truncation}

In this section, inspired by \cite{LSTX2023,MR2607929} we introduce the stream function to show that the full Euler system is equivalent to a second order quasilinear equation, which is elliptic in the subsonic region and becomes singular at the sonic state. Then a subsonic truncation is used to modify this quasilinear equation into a uniformly elliptic equation.

\subsection{The equation for the stream function}\label{the equation for the stream function}

It follows from the continuity equation \eqref{Euler}$_1$ that there is a stream function $\psi$ satisfying
\begin{equation}\label{psi gradient}
\n\psi=(-\r u_2,\r u_1).
\end{equation}
Moreover, by \eqref{Euler}, if the flow is away from vacuum, then
\begin{equation}\label{BS}
	\u\c\n B=0 \q\text{and}\q \u\c\n S=0,
\end{equation}
where $B$ is the Bernoulli function defined by
\begin{equation}\label{Bernoulli}
	B:=\frac{|\u|^2}{2}+\frac{\g P}{(\g-1)\r},
\end{equation}
and $S$ is the entropy function defined by
\begin{equation}\label{entropy}
	S:=\frac{\g P}{(\g-1)\r^\g}.
\end{equation}
This implies the Bernoulli function $B$ and the entropy function $S$ are constants along each streamline. Now we show that actually $B$ and $S$ can be expressed as functions of the stream function $\psi$ under suitable assumptions. For this we denote the mass flux of the flow as
\begin{equation}\label{def:Q}
	Q:=\int_{0}^{\bar H}(\bar\rho\bar u)(x_2)dx_2.
\end{equation}

\begin{proposition}\label{prop:BS_def}
	Given a constant pressure $\bar P>0$, a density $\bar \rho \in C^{1,1}([0,\bar H])$ and a horizontal velocity $\bar u \in C^{1,1}([0,\bar H])$ of the flow at the upstream, which satisfy \eqref{thm_condition1}. Suppose that $(\rho, \mathbf{u},P)$ is a solution to the Euler system \eqref{Euler}, and each streamline is globally well-defined in the flow region. Then there exist $C^{1,1}$ functions $\mathcal{B}:[0,Q]\rightarrow \R$ and  $\mathcal{S}:[0,Q]\rightarrow \R$ such that
	\begin{equation}\label{BBSS}
		(B,S)(x_1,x_2)=(\B,\S)(\psi(x_1,x_2))
	\end{equation}
	and
	\begin{equation}\label{BS relation}
		\B(\psi)=\frac{|\nabla \psi|^2}{2\rho^2}+\r^{\g-1}\S(\psi).
	\end{equation}
	Moreover, denote
	\begin{equation}\label{k0}
		\kappa_0:=\|{\bar u}'\|_{C^{0,1}([0, \bar H])} \q\text{and}\q \kappa_1:=\|{\bar \rho}'\|_{C^{0,1}([0,\bar H])}.
	\end{equation}
	Then
	\begin{equation}\label{k0_BS}\begin{split}
		&\|\mathcal{B}'\|_{L^\infty([0, Q])}\leq  \frac{C}{\bar\rho_*}\left(\k_0+\frac{\bar P\k_1}{(\bar\rho_*)^2}\right),\q
		\|\mathcal{S}'\|_{L^\infty([0, Q])}\leq \frac{C\bar P\k_1}{(\bar\rho_*)^{\g+2}},
		\\
		&\|\mathcal{B}''\|_{L^\infty([0,Q])}\leq \frac{C}{(\bar\rho_*)^2}\left(\k_0+\frac{\k_0\k_1}{\bar\rho_*}+\frac{\bar P\k_1}{(\bar\rho_*)^2}\left(1+\k_0+\frac{\k_1}{\bar\rho_*}\right)\right),\\
		&\|\mathcal{S}''\|_{L^\infty([0,Q])}\leq \frac{C\bar P\k_1}{(\bar\rho_*)^{\g+3}}\left(1+\k_0+\frac{\k_1}{\bar\rho_*}\right),
	\end{split}\end{equation}
	where $C=C(\gamma,\bar u_*)>0$, $\bar\rho_*$ and $\bar u_*$ are defined in \eqref{thm_condition1}.
\end{proposition}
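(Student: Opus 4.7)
The plan is to exploit that both the Bernoulli function $B$ and the entropy $S$ are constant along each streamline by \eqref{BS}, so each can be written as a function of the stream function $\psi$ once streamlines are labelled by their $\psi$-value. Since $(\r,u_1,u_2,P)\to(\bar\r,\bar u,0,\bar P)$ as $x_1\to-\infty$ and $\p_{x_2}\psi=\r u_1$, normalizing $\psi=0$ on the symmetry axis $\N_0$ gives
\begin{equation*}
\psi(x_1,x_2)\to \eta(x_2):=\int_0^{x_2}\bar\r(s)\bar u(s)\,ds \quad\text{as } x_1\to-\infty.
\end{equation*}
Since $\bar\r_*,\bar u_*>0$, one has $\eta'=\bar\r\bar u\geq\bar\r_*\bar u_*>0$, so $\eta:[0,\bar H]\to[0,Q]$ is a $C^{1,1}$ bijection with $\eta^{-1}\in C^{1,1}$ and $(\eta^{-1})'(\psi)=1/((\bar\r\bar u)\circ\eta^{-1}(\psi))$.

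Using the hypothesis that each streamline extends back to the upstream, I would set
\begin{equation*}
\B(\psi):=\left.\left(\frac{\bar u^2}{2}+\frac{\g\bar P}{(\g-1)\bar\r}\right)\right|_{x_2=\eta^{-1}(\psi)}, \qquad \S(\psi):=\left.\frac{\g\bar P}{(\g-1)\bar\r^\g}\right|_{x_2=\eta^{-1}(\psi)},
\end{equation*}
and verify \eqref{BBSS} by noting these are the values of $B,S$ on the streamline labelled by $\psi$ at the upstream, combined with the transport identities $\u\c\n B=\u\c\n S=0$ from \eqref{BS}. The identity \eqref{BS relation} is then an algebraic consequence of $|\n\psi|^2=\r^2|\u|^2$ and $\r^{\g-1}\S(\psi)=\g P/((\g-1)\r)$ (using the definition of $S$ and $S=\S(\psi)$), which rewrites $B=|\u|^2/2+\g P/((\g-1)\r)$ in the desired form.

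For the derivative estimates, the chain rule together with $(\eta^{-1})'=1/(\bar\r\bar u)\circ\eta^{-1}$ gives, at $x_2=\eta^{-1}(\psi)$,
\begin{equation*}
\B'(\psi)=\frac{\bar u'}{\bar\r}-\frac{\g\bar P\,\bar\r'}{(\g-1)\bar\r^3\bar u}, \qquad \S'(\psi)=-\frac{\g^2\bar P\,\bar\r'}{(\g-1)\bar\r^{\g+2}\bar u}.
\end{equation*}
Substituting $\bar\r\geq\bar\r_*$, $\bar u\geq\bar u_*$, $\|\bar u'\|_{L^\infty}\leq\k_0$, $\|\bar\r'\|_{L^\infty}\leq\k_1$, and allowing $C$ to depend on $\g$ and $\bar u_*$, reproduces the first-order bounds in \eqref{k0_BS}. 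Differentiating once more generates cross terms in $\bar u''$, $\bar\r''$, $(\bar u')^2$, $(\bar\r')^2$ and $\bar u'\bar\r'$, each multiplied by appropriate negative powers of $\bar\r$ and $\bar u$; since $\bar u,\bar\r\in C^{1,1}$ the second derivatives are controlled in $L^\infty$ by $\k_0,\k_1$ respectively, and careful bookkeeping of the remaining factors reproduces the $\B''$ and $\S''$ estimates.

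The only conceptual subtlety is the global well-definedness of $\B,\S$ on the whole interval $[0,Q]$, which rests on the hypothesis that each streamline traces back to $x_1=-\infty$; this is precisely what allows the labelling of streamlines by their upstream height and hence the passage from ``constants along streamlines'' to ``functions of $\psi$''. Beyond that, the proof is a direct chain-rule computation, with the most time-consuming (though entirely routine) step being the tracking of cross terms in the second derivative.
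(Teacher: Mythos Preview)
Your proposal is correct and follows essentially the same approach as the paper: both define $\B,\S$ by composing the upstream Bernoulli and entropy functions with the inverse of the upstream stream function (your $\eta^{-1}$ is the paper's $\h$), both compute the same explicit first derivatives via the chain rule, and both obtain the bounds by substituting the lower bounds $\bar\rho_*,\bar u_*$ and the $C^{0,1}$ norms $\kappa_0,\kappa_1$. The only difference is that the paper writes out the second derivatives of $\B$ and $\S$ explicitly before bounding them, whereas you summarize this as ``careful bookkeeping''; the computations are identical in substance.
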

\begin{proof}
	Since the Bernoulli function $B$ and the entropy function $S$ are conserved along each streamline, which is globally well-defined in the flow region, then $B$ and $S$ are uniquely determined by their values at the upstream. Let $\h(z):[0,Q]\to[0,\bar H]$ be the position of the streamline at the  upstream where the stream function takes the value $\psi=z$, i.e.
\begin{equation}\label{h}
z=\int_0^{\h(z)}(\bar{\r}\bar{u})(x_2)dx_2.
\end{equation}
Note that $\h(z)$ is well-defined as $\bar\rho,\, \bar u>0$. Since the Bernoulli function and the entropy function at the upstream are
\begin{equation}\label{def:B_upstream}
	\bar B(x_2):=\lim_{x_1\rightarrow -\infty} B(x_1,x_2)=\frac{\bar u(x_2)^2}{2}+\frac{\g \bar{P}}{(\g-1)\bar{\r}(x_2)}
\end{equation}
and
\begin{equation}\label{def:B_upstream}
	\bar S(x_2):=\lim_{x_1\rightarrow -\infty} S(x_1,x_2)=\frac{\g \bar{P}}{(\g-1)\bar{\r}(x_2)^\g}
\end{equation}
for $x_2\in[0,\bar H]$, if we define two functions $\mathcal B$ and $\S$ as
\begin{equation}\label{Bpsi}
\B(z):=\bar B(\h(z))=\frac{\bar{u}(\h(z))^2}{2}+\frac{\g \bar{P}}{(\g-1)\bar{\r}(\h(z))}
\end{equation}
and
\begin{equation}\label{Spsi}
\S(z):=\bar S(\h(z))=\frac{\g\bar{P}}{(\g-1)\bar{\r}(\h(z))^\g}
\end{equation}
for $z\in[0,Q]$, then one has \eqref{BBSS}. Since by \eqref{psi gradient} the flow velocity $\u:=(u_1,u_2)$ satisfies $|\u|^2=|\n\psi|^2/\r^2$, the equality \eqref{BS relation} follows from the definitions of the Bernoulli function in \eqref{Bernoulli} and the entropy function in \eqref{entropy}, as well as \eqref{BBSS}.

To estimate the first and second derivatives of the functions $\B$ and $\S$, we note that by
differentiating \eqref{h} with respect to $z$ one gets
$$\h'(z)=\frac{1}{(\bar \r\bar u)(\h(z))}.$$
Thus differentiating \eqref{Bpsi} and \eqref{Spsi} directly yield
\begin{equation}\label{BS deri}\begin{split}
		&\B'(z)=\frac{\bar{u}'(\h(z))}{\bar{\rho}(\h(z))}-\frac{\g\bar P}{\g-1}\left(\frac{\bar\rho'}{\bar\rho^3\bar u}\right)(\h(z)),
		\q \S'(z)=-\frac{\g^2\bar P}{\g-1}\left(\frac{\bar\rho'}{\bar\rho^{\g+2}\bar u}\right)(\h(z)),\\
		&\B''(z)=\frac1{\bar\rho^2\bar u}\left[\bar u''-\frac{\bar\rho'\bar u'}{\bar\rho}-\frac{\g\bar P}{\g-1}\left(\frac{\bar\rho''}{\bar\rho^2\bar u}-\frac{3(\bar\rho')^2}{\bar\rho^3\bar u}-\frac{\bar\rho'\bar u'}{\bar\rho^2\bar u^2}\right)\right](\h(z)),\\
		&\S''(z)=-\frac{\g^2\bar P}{\g-1}\left[\frac1{\bar{\r}^{\g+3}\bar{u}^2}\left(\bar{\rho}''-\frac{\bar\rho'\bar u'}{\bar u}-(\g+2)\frac{(\bar\rho')^2}{\bar \rho}\right)\right](\h(z)).
\end{split}\end{equation}
Then \eqref{k0_BS} follows from the definitions of $\k_0$ and $\k_1$ in \eqref{k0}.
\end{proof}

Now we deduce the connection between $(\B,\S)$ and the vorticity function $\o:=\p_{x_1}u_2-\p_{x_2}u_1$. By \eqref{Bernoulli} and \eqref{entropy} one has
\begin{equation}\label{BS_relation}
	B=\frac{u_1^2+u_2^2}{2}+\left(\frac{\g P}{\g-1}\right)^{\frac{\g-1}{\g}}S^{\frac1\g}.
\end{equation}
Differentiating \eqref{BS_relation} and noting that
$$\r=\left(\frac{\g P}{(\g-1)S}\right)^{\frac1\g},$$
one gets
\begin{equation}\label{Ber pt}
\p_{x_1}B=u_1\p_{x_1}u_1+u_2\p_{x_1}u_2+\frac{\r^{\g-1}}\g\p_{x_1}S+\frac{\p_{x_1}P}{\r}
\end{equation}
and
\begin{equation}\label{Ber pt2}
 \p_{x_2}B= u_1\p_{x_2}u_1+u_2\p_{x_2}u_2+\frac{\r^{\g-1}}\g\p_{x_2}S+\frac{\p_{x_2}P}{\r}.
\end{equation}
These together with
\begin{equation}\label{B_equ}
u_1\p_{x_1}u_1+u_2\p_{x_2}u_1+\frac{\p_{x_1}P}{\r}=0
\quad\text{and}\quad  u_1\p_{x_1}u_2+u_2\p_{x_2}u_2+\frac{\p_{x_2}P}{\r}=0,
\end{equation}
which are obtained from the first three equations in \eqref{Euler}, give
\begin{equation}\label{BS_equ1}
\p_{x_1}B-\frac {\r^{\g-1}}\g \p_{x_1}S =u_2\omega\q{\rm and}\q \p_{x_2}B-\frac {\r^{\g-1}}\g \p_{x_2}S=-u_1\omega.
\end{equation}
Since it follows from \eqref{psi gradient} and \eqref{BBSS} that
\begin{equation}\label{pBS}
	(\p_{x_1}B,\p_{x_2}B)=\r \B'(\psi)(-u_2,u_1)\q{\rm and}\q 	(\p_{x_1}S,\p_{x_2}S)=\r \S'(\psi)(-u_2,u_1),
\end{equation}
substituting \eqref{pBS} into \eqref{BS_equ1} yields
\begin{equation}\label{vorticity}
	\o=-\r \B'(\psi)+\frac{\r^\g}{\g}\S'(\psi)
\end{equation}
provided $|\mathbf u|\neq 0$. Thus smooth solutions of the full Euler system  \eqref{Euler} satisfy a system consisting of the continuity equation \eqref{Euler}$_1$, \eqref{BS} and \eqref{vorticity}. The following proposition shows that the two systems are actually equivalent under appropriate conditions.

\begin{proposition}\label{prop:equivalent_system}
	Let $\tilde\O\subset \R^2$ be the domain bounded by two streamlines $\N_0$ and
	$$\tilde{\N}=\{(x_1,x_2): x_1=\tilde\Theta(x_2), \ul h<x_2<\bar H\},$$
	where $0<\ul{h}<\bar{H}<\infty$ and $\tilde \Theta:(\ul h,\bar H)\to\mathbb R$ is a $C^1$ function with
	$$\lim_{x_2\to\bar H-}\tilde\Theta(x_2)=-\infty \q{\rm and}\q \lim_{x_2\to\ul{h}+}\tilde\Theta(x_2)=\infty.$$
	Let $\rho:\overline{\tilde \Omega}\to(0,\infty)$, $\mathbf u=(u_1,u_2):\overline{\tilde \Omega}\to\mathbb R^2$ and $P:\overline{\tilde \Omega}\to(0,\infty)$
	be $C^{1,1}$ in $\tilde \Omega$ and continuous up to $\partial {\tilde \Omega}$ except finitely many points.
	Suppose $\u$ satisfies the slip boundary condition $\u\c\mn=0$ on $\p \tilde{\O}$, $(\rho,\u,P)$ satisfies the upstream asymptotic behavior  \eqref{asymptotic rup} for some positive $C^{1,1}$ functions $(\bar\rho,\bar u)$ and a positive constant $\bar P$. Moreover, suppose that
    \begin{equation}\label{u2}
	u_2<0 \q\text{in } \tilde{\O}.
    \end{equation}
	Then $(\rho,\u,P)$ solves the full Euler system \eqref{Euler} in $\tilde \Omega$ if and only if $(\rho,\u,P)$ solves the system consisting of the continuity equation \eqref{Euler}$_1$, \eqref{BS} and \eqref{vorticity} in $\tilde{\O}$.
\end{proposition}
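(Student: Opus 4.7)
The plan is to treat the two directions separately, in both cases exploiting the assumption $u_2<0$ to ensure that the backward streamline from any point of $\overline{\tilde\O}$ stays in $\tilde\O$ and extends to $x_1=-\infty$. Together with the slip condition $\u\c\mn=0$ on $\p\tilde\O$ and the upstream asymptotics \eqref{asymptotic rup}, this lets us apply Proposition \ref{prop:BS_def} to write $B=\B(\psi)$ and $S=\S(\psi)$ throughout $\overline{\tilde\O}$, with $\B,\S$ given by the explicit formulas \eqref{Bpsi}--\eqref{Spsi}.

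For the forward direction (Euler $\Rightarrow$ reduced system), the continuity equation is \eqref{Euler}$_1$ itself and \eqref{BS} is the standard fact, recalled before \eqref{Bernoulli}, that the Bernoulli function and the entropy are convected by the flow. The vorticity relation \eqref{vorticity} is then exactly the computation already performed in \eqref{BS_relation}--\eqref{vorticity}: one differentiates the algebraic identity \eqref{BS_relation} to obtain \eqref{Ber pt}--\eqref{Ber pt2}, eliminates the velocity-squared terms via the momentum equations \eqref{B_equ}, arrives at \eqref{BS_equ1}, and substitutes \eqref{pBS} (which follows from $B=\B(\psi)$, $S=\S(\psi)$ and $\n\psi=(-\r u_2,\r u_1)$).

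For the reverse direction I would run this computation in the opposite order. The identities \eqref{Ber pt}--\eqref{Ber pt2} are purely algebraic and do not rely on \eqref{Euler}, so evaluating their left-hand sides via \eqref{pBS} gives
\[
u_2\!\left(-\r\B'(\psi)+\tfrac{\r^\g}{\g}\S'(\psi)\right)=u_1\p_{x_1}u_1+u_2\p_{x_1}u_2+\tfrac{\p_{x_1}P}{\r},
\]
and the analogous identity in the $x_2$-direction with coefficient $-u_1$ on the left. By \eqref{vorticity} the two left-hand sides equal $u_2\o$ and $-u_1\o$, so after expanding $\o=\p_{x_1}u_2-\p_{x_2}u_1$ the two momentum equations in \eqref{B_equ} fall out by direct cancellation. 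The energy equation is then automatic from the identity $\r\u E+P\u=\r B\u$ (since $\r E+P=\r B$) together with $\n\c(\r\u)=0$ and $\u\c\n B=0$.

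The main obstacle I anticipate is the global streamline picture that makes Proposition \ref{prop:BS_def} applicable on the non-compact domain $\tilde\O$: one has to verify that every backward streamline stays inside $\tilde\O$ without crossing $\tilde\N$ or the axis and genuinely reaches $x_1=-\infty$, using $u_2<0$, the slip condition, and the shape of $\tilde\N$, while handling the finitely many boundary points where $(\r,\u,P)$ is not continuous up to $\p\tilde\O$ by a limiting argument. Once this picture is in place, the remainder of the proof reduces to the algebraic manipulations sketched above.
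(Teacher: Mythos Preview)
Your proposal is correct. The forward direction and the global streamline picture match the paper's treatment. For the reverse direction, however, you take a genuinely different and more elementary route than the paper.

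The paper's argument for the reverse implication is indirect: from \eqref{vorticity} and the continuity equation it first derives that the momentum residuals $R_i:=u_j\p_{x_j}u_i+\tfrac{\p_{x_i}P}{\r}$ satisfy $\p_{x_1}R_2-\p_{x_2}R_1=0$, so that $(R_1,R_2)=\n\Phi$ for some potential $\Phi$; it then uses \eqref{BS} together with \eqref{Ber pt}--\eqref{Ber pt2} to show $\u\c\n\Phi=0$, and finally invokes the upstream asymptotics to conclude $\Phi\equiv C$, whence $R_1=R_2=0$. Your approach instead first upgrades \eqref{BS} to the pointwise identities $B=\B(\psi)$, $S=\S(\psi)$ via the global streamline picture and the upstream data, and then substitutes \eqref{pBS} and \eqref{vorticity} directly into the algebraic identities \eqref{Ber pt}--\eqref{Ber pt2}; the momentum equations drop out by one-line cancellation, with no potential function needed. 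Both routes consume the same ingredients (global streamlines, upstream asymptotics, \eqref{BS}, \eqref{vorticity}), but yours uses them earlier and avoids the curl computation and the auxiliary $\Phi$.

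One small caveat: you cite Proposition~\ref{prop:BS_def} to obtain $B=\B(\psi)$, $S=\S(\psi)$, but that proposition is stated under the hypothesis that the full Euler system holds, which is precisely what you are proving in the reverse direction. The conclusion you need is still valid---the \emph{proof} of Proposition~\ref{prop:BS_def} only uses that $B$ and $S$ are constant along streamlines (i.e., \eqref{BS}), the global streamline picture, and the upstream asymptotics---but you should say this explicitly rather than invoke the proposition as a black box.
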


\begin{proof}
	In view of \eqref{u2}, through each point in $\tilde{\O}$, there is one and only one streamline which satisfies
	\begin{equation*}
		\left\{ \begin{split}
			&\frac{dx_1}{ds}=u_1(x_1(s),x_2(s)),\\
			&\frac{dx_2}{ds}=u_2(x_1(s),x_2(s)),
		\end{split}
		\right.
	\end{equation*}
	and can be defined globally in the domain. Furthermore, any streamline through some point in $\tilde{\O}$ cannot touch $\p\tilde\O$. To prove it, without loss of generality we assume that there exists a streamline through $(-\bar x_1,\bar x_2)$ (with $\bar x_1>0$ sufficiently large) in $\tilde{\O}$ passing through $(\tilde x_1, 0)$. By  the continuity equation \eqref{Euler}$_1$ and the slip boundary condition, along each streamline one has
	$$0=\int_{0}^{\bar x_2}(\r u_1)(-\bar x_1,s)ds,$$
	which contradicts the asymptotic behavior \eqref{asymptotic rup}.
	As each streamline is globally well-defined in $\tilde\O$, from Proposition \ref{prop:BS_def} and previous analysis, it suffices to derive the full Euler system \eqref{Euler} from the system consisting of \eqref{Euler}$_1$, \eqref{BS} and \eqref{vorticity}.
	
	It follows from \eqref{vorticity} and \eqref{psi gradient} that
$$\r\u\c\n\left(\frac{\o}{\r}-\frac{\r^{\g-1}}\g\S'(\psi)\right)=0.$$
With the help of \eqref{psi gradient}, \eqref{pBS} and \eqref{entropy}, one can check that the above equality is equivalent to
\begin{equation}\label{equ4}
\r\u\c\n\left(\frac{\o}{\r}\right)+\n\c\left(\frac{\p_{x_2}P}{\r},-\frac{\p_{x_1}P}{\r}\right)=0.
\end{equation}
Since the continuity equation \eqref{Euler}$_1$ leads to
$$\r\u\c\n\left(\frac{\o}{\r}\right)=\u\c\n\o-\o\frac{\u\c\n\r}{\r}=\u\c\n\o+\o\n\c\u,$$
it follows from \eqref{equ4} that
$$\p_{x_1}\left(u_1\p_{x_1}u_2+u_2\p_{x_2}u_2+\frac{\p_{x_2}P}{\r}\right)-\p_{x_2}\left(u_1\p_{x_1}u_1+u_2\p_{x_2}u_1+\frac{\p_{x_1}P}{\r}\right)=0.$$
Therefore there exists a function $\Phi$ such that
$$\p_{x_1}\Phi=u_1\p_{x_1}u_1+u_2\p_{x_2}u_1+\frac{\p_{x_1}P}{\r}\q{\rm and}\q \p_{x_2}\Phi=u_1\p_{x_1}u_2+u_2\p_{x_2}u_2+\frac{\p_{x_2}P}{\r}.$$
Moreover, combining \eqref{BS}, \eqref{Ber pt} and \eqref{Ber pt2}  together yields
$$\u\c\left(u_1\p_{x_1}u_1+u_2\p_{x_2}u_1+\frac{\p_{x_1}P}{\r}, u_1\p_{x_1}u_2+u_2\p_{x_2}u_2+\frac{\p_{x_2}P}{\r}\right)=0,$$
that is
$$\u\c\n\Phi=0.$$
This means $\Phi$ is a constant along each streamline, in particular along $\N_0$ and $\tilde{\N}$. Notice that \eqref{asymptotic rup} implies $\p_{x_2}\Phi\to0$ as $x_1\to-\infty$, hence one has
\begin{equation*}
\Phi\to C\q\text{as } x_1\to-\infty.
\end{equation*}
Therefore, one concludes that $\Phi\equiv C$ in the whole domain $\tilde{\O}$. This implies that $\p_{x_1}\Phi=\p_{x_2}\Phi\equiv0$ in $\tilde{\O}$, i.e., \eqref{B_equ}
holds globally in $\tilde{\O}$.

Combining the continuity equation \eqref{Euler}$_1$,  \eqref{B_equ} and the Bernoulli law (the first equality in \eqref{BS})  together gives the full Euler system \eqref{Euler}.
\end{proof}

For later purpose, we extend the Bernoulli function $\B$ and the entropy function $\S$ from $[0,Q]$ to $\R$ as follows: firstly in view of \eqref{thm_condition1}, $\bar\rho$ and $\bar u$ can be extended to $C^{1,1}$ functions in $\R$, which are still denoted by $\bar\rho$ and $\bar u$ respectively, such that
$$\bar\rho,\,\bar u>0 \ \text{ on } \R, \quad {\bar\rho'=\bar u'= 0 \  \text{ on } (-\infty,0],}
\quad \bar\rho'=0, \, \bar u'\geq 0
\ \text{ on } [\bar H,\infty).$$
Furthermore, the extension can be made such that
$$\|\bar \rho\|_{C^{1,1}(\R)}=\|\bar \rho\|_{C^{1,1}([0,\bar H])}\q \text{and}\q
\|\bar u\|_{C^{1,1}(\R)}\leq C\|\bar u\|_{C^{1,1}([0,\bar H])}$$
and
\begin{align}\label{rhou_R_upper}
	\bar \rho_\ast=\inf_{\R}\bar\rho \leq \sup_{\R}\bar \rho=\bar \rho^\ast\q\text{and}\q
	\bar u_\ast=\inf_{\R}\bar u \leq \sup_{\R} \bar u:=\tilde u^\ast<C\bar u^*,
\end{align}
where the constants $\bar\rho_*$ and $\bar u_*$ are defined in \eqref{thm_condition1}, $\bar\rho^*$ and $\bar u^*$ are defined by
\begin{align}\label{rhou_bar_upper}
	\bar\rho^*:=\sup_{x_2\in[0,\bar H]} \bar \rho(x_2)\q\text{and}\q
	\bar u^*:=\sup_{x_2\in[0,\bar H]}\bar u(x_2),
\end{align}
and $C=C(\bar u)>0$.
Consequently, using \eqref{Bpsi} and \eqref{Spsi} one naturally gets extensions of $\mathcal B$ and $\S$ to $C^{1,1}$ functions in $\R$, which are still denoted by $\mathcal{B}$ and $\S$ respectively, such that
\begin{equation}\label{eq:sign_B}
	\mathcal B'(z)=\mathcal S'(z)=0 \ \text{ on } (-\infty, 0], \q \B'(z)\geq0, \, \S'(z)=0 \ \text{ on } [Q,\infty),
\end{equation}
and
\begin{equation*}\label{eq:B_derivative}
	\|\mathcal{B}'\|_{C^{0,1}(\R)}\leq C \|\mathcal{B}'\|_{C^{0,1}([0, Q])}, \q \|\mathcal{S}'\|_{C^{0,1}(\R)}= \|\mathcal{S}'\|_{C^{0,1}([0, Q])}
\end{equation*}
for some $C=C(\bar u)>0$. Moreover, $\B$ and $\S$ are bounded in $\R$, i.e.,
\begin{align}\label{BS_bound}
	0<B_*\leq\B(z)\leq B^*<\infty \q\text{and}\q
	0<S_*\leq\S(z)\leq S^*<\infty, \q z\in \R,
\end{align}
where
\begin{align}\label{B**}
	B_*:=\frac{(\bar{u}_*)^2}{2}+\frac{\g \bar{P}}{(\g-1)\bar{\r}^*},\q
	B^*:=\frac{(\tilde{u}^*)^2}{2}+\frac{\g \bar{P}}{(\g-1)\bar{\r}_*},
\end{align}
and
\begin{align}\label{S**}
	S_*:=\frac{\g\bar{P}}{(\g-1)(\bar{\r}^*)^\g},
	\q
	S^*:=\frac{\g\bar{P}}{(\g-1)(\bar{\r}_*)^\g},
\end{align}
and where $\bar\rho^*$, $\bar\rho_*$, $\bar u_*$, $\tilde u^*$ are the same constants as in \eqref{rhou_R_upper}.
	
Before formulating the Euler system into a quasilinear equation for the stream function $\psi$, let us digress for the study on the flow state.
It follows from \eqref{Bernoulli}-\eqref{entropy} and Proposition \ref{prop:BS_def} that the flow speed $q$ satisfy
\begin{equation}\label{eq:q_rho}
	q=\mathfrak q(\rho,z)=\sqrt{2(\B(z)-\r^{\g-1}\S(z))}.
\end{equation}
Besides, using \eqref{entropy} and the definition of the sound speed in \eqref{C and M} one has
$$c=\sqrt{(\g-1)\rho^{\g-1}\mathcal S(z)}.$$
Let
\begin{equation}\label{rho cm}
	\varrho_c(z):=\left(\frac{2\B(z)}{(\g+1)\S(z)}\right)^{\frac1{\g-1}} \q\text{and}\q \varrho_m(z):=\left(\frac{\B(z)}{\S(z)}\right)^{\frac1{\g-1}}
\end{equation}
be the critical density and maximum density respectively. The flow speed $q=\mathfrak q(\rho,z)$ is well-defined if and only if $\rho\leq \varrho_m(z)$. The flow is subsonic, i.e., $q\in[0,c)$, if and only if
$q\in[0,\mathfrak{q}(\varrho_c(z),z))$, or equivalently   $\r\in(\varrho_c(z),\varrho_m(z)]$. Denote the square of the momentum and the square of the critical momentum by
\begin{equation}\label{t}
	\t(\rho, z):=\rho^2\mathfrak{q}^2(\rho, z)=2\rho^2(\B(z)-\rho^{\g-1}\S(z))
\end{equation}
and
\begin{equation}\label{tc}
	\t_c(z):=\t(\varrho_c(z),z)=(\g-1)\left(\frac{2\B(z)}{\g+1}\right)^{\frac{\g+1}{\g-1}}\S(z)^{-\frac2{\g-1}},
\end{equation}
respectively.

By the definition of the sonic speed in \eqref{C and M}, the flow is subsonic at the upstream if
$(\tilde u^*)^2<\g\bar P/\bar\rho^*$, where $\tilde u^*$ is defined in \eqref{rhou_R_upper}. This inequality holds provided
\begin{equation}\label{Pbar_*}
	\bar P>\tilde P_*, \quad\text{where}\q \tilde P_*:=\frac1{\g}\bar\rho^*(\tilde u^*)^2\geq\bar P_*:=\frac1{\g}\sup_{x_2\in[0,\bar H]}(\bar\rho\bar u^2)(x_2).
\end{equation}
Then by \eqref{B**} and \eqref{S**} one has
\begin{equation}\label{B/S}
	B^*<\frac{(\g+1)\g\bar P}{2(\g-1)\bar\rho_*} \q\text{and}\q
	(\bar \rho_*)^{\g-1}<\frac{\B(z)}{\S(z)}<\frac{\g+1}{2}(\bar\rho^*)^{\g-1}, \q z\in\R.
\end{equation}
Thus in view of \eqref{rho cm} and \eqref{tc}, for $z\in \R$,
\begin{align}\label{rho_bound}
	0<\left(\frac{2}{\g+1}\right)^{\frac1{\g-1}}\bar\rho_*<\varrho_c(z)<\varrho_m(z)<\left(\frac{\g+1}2\right)^{\frac1{\g-1}}\bar\rho^*,
\end{align}
and
\begin{align}\label{tc_bound}
0<\left(\frac2{\g+1}\right)^{\frac{\g+1}{\g-1}}\g\bar P\frac{(\bar\rho_*)^2}{\bar\rho^*}\leq\mathfrak t_c(z)\leq \g\bar P\frac{(\bar\rho^*)^2}{\bar\rho_*}.
\end{align}
Furthermore, since
\begin{equation}\label{tc_deri_expression}
	\t_c'(z)=\left(\frac2{\gamma+1}\right)^{\frac{\gamma+1}{\gamma-1}}\left((\gamma+1)\left(\frac{\B(z)}{\S(z)}\right)^{\frac2{\gamma-1}}\B'(z)-2\left(\frac{\B(z)}{\S(z)}\right)^{\frac{\gamma+1}{\gamma-1}}\S'(z)\right),
\end{equation}
by the $L^\infty$-estimates for the derivatives of $\B$ and $\S$ in \eqref{k0_BS}, and the upper bound of $\B/\S$ in \eqref{B/S}, one has
\begin{equation}\label{tc_deri1}
		|\t'_c(z)|\leq C\frac{(\bar\rho^*)^2}{\bar\rho_*}\left(\k_0+\frac{\bar P\k_1}{(\bar\rho_*)^2}\left(1+\left(\frac{\bar\rho^*}{\bar\rho_*}\right)^{\g-1}\right)\right)
\end{equation}
for some constant $C=C(\g,\bar u_*)>0$.
Similarly, by the explicit expression of $\t_c''(z)$ one has
\begin{equation}\label{tc_deri2}\begin{split}
		|\t''(z)|\leq& C\frac{(\bar\rho^*)^2}{(\bar\rho_*)^2}\left(\k_0+\frac{\k_0\k_1}{\bar\rho_*}+\frac{\bar P\k_1}{(\bar\rho_*)^2}\left(1+\k_0+\frac{\k_1}{\bar\rho_*}\right)\left(1+\left(\frac{\bar\rho^*}{\bar\rho_*}\right)^{\g-1}\right)\right)\\
		&+C\frac{(\bar\rho^*)^{\g+2}}{\bar P(\bar\rho_*)^{\g+1}}\left(\k_0+\frac{\bar P\k_1}{(\bar\rho_*)^2}\left(1+\left(\frac{\bar\rho^*}{\bar\rho_*}\right)^{\g-1}\right)\right)\left(\k_0+\frac{\bar P\k_1}{(\bar\rho_*)^2}\right),
	\end{split}
\end{equation}
for some constant $C=C(\g,\bar u_*)>0$.

\begin{lemma}\label{g}
	Suppose the density function $\r$ and the stream function $\psi$ satisfy \eqref{BS relation}. Then the following statements hold:
	\begin{itemize}
		\item [(i)] The density function $\r$ can be expressed as a function of $|\n\psi|^2$ and $\psi$ in the subsonic region, i.e.,
		\begin{equation}\label{rho g}
			\r=\frac 1{g(|\n\psi|^2,\psi)}, \q \text{if } \r\in (\varrho_c(\psi),\varrho_m(\psi)],
		\end{equation}
		where $\varrho_c$ and $\varrho_m$ are functions defined in \eqref{rho cm}, and
		$$g:\{(t,z):0\leq t<\mathfrak t_c(z),\, z\in \R\}\to \R$$
		is a function smooth in $t$ and $C^{1,1}$ in $z$ with $\mathfrak t_c$ defined in \eqref{tc}. Furthermore,
		\begin{equation}\label{g bound}
			\left(\sup_{z\in\R}\varrho_m(z)\right)^{-1}=:g_*\leq g(t,z)\leq g^*:=\left(\inf_{z\in\R}\varrho_c(z)\right)^{-1}.
		\end{equation}
		\item [(ii)] The function $g$ satisfies the identity
		\begin{equation}\label{dzg dtg}
			\p_z g(t,z)=-2\p_t g(t,z)\frac{\B'(z)-g(t,z)^{1-\g}\S'(z)}{g(t,z)^2}, \q t\in[0,\mathfrak t_c(z)),\, z\in \R.
		\end{equation}
	\end{itemize}
\end{lemma}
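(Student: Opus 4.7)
The plan is to reinterpret \eqref{BS relation} as the algebraic equation
$$t=\mathfrak{t}(\rho,z):=2\rho^{2}\bigl(\mathcal B(z)-\rho^{\gamma-1}\mathcal S(z)\bigr)$$
with $t:=|\nabla\psi|^{2}$ and $z:=\psi$, and to invert it in $\rho$ on the subsonic branch using the implicit function theorem. So for part (i) I would first compute
$$\partial_{\rho}\mathfrak t(\rho,z)=2\rho\bigl(2\mathcal B(z)-(\gamma+1)\rho^{\gamma-1}\mathcal S(z)\bigr),$$
observe from the definition \eqref{rho cm} that this derivative vanishes precisely at $\rho=\varrho_{c}(z)$ and is strictly negative on $(\varrho_{c}(z),\varrho_{m}(z)]$, and note that $\mathfrak t(\varrho_{c}(z),z)=\mathfrak t_{c}(z)$ while $\mathfrak t(\varrho_{m}(z),z)=0$. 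Hence, for each $z\in\R$ and each $t\in[0,\mathfrak t_{c}(z))$, there is a unique $\rho\in(\varrho_{c}(z),\varrho_{m}(z)]$ solving $\mathfrak t(\rho,z)=t$; call it $\mathcal R(t,z)$ and set $g(t,z):=1/\mathcal R(t,z)$.

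Next I would invoke the implicit function theorem on the $C^{1,1}$ map $\mathfrak t$ (which is analytic in $\rho$ and $C^{1,1}$ in $z$ via the extended $\mathcal B,\mathcal S$): since $\partial_{\rho}\mathfrak t\neq 0$ throughout the subsonic branch, $\mathcal R$ and therefore $g$ is smooth in $t$ and $C^{1,1}$ in $z$ on the set $\{(t,z):0\leq t<\mathfrak t_{c}(z),\,z\in\R\}$. The two-sided bound \eqref{g bound} then drops out immediately from $\rho\in(\varrho_{c}(z),\varrho_{m}(z)]$ combined with the uniform bounds on $\varrho_{c},\varrho_{m}$ recorded in \eqref{rho_bound}, which guarantee $g_{*},g^{*}\in(0,\infty)$.

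For part (ii), I would differentiate the identity
$$t=\frac{2\mathcal B(z)}{g(t,z)^{2}}-\frac{2\mathcal S(z)}{g(t,z)^{\gamma+1}}$$
separately in $t$ and in $z$. The $t$-derivative gives
$$1=\frac{\partial_{t}g}{g^{3}}\Bigl(-4\mathcal B(z)+\tfrac{2(\gamma+1)\mathcal S(z)}{g^{\gamma-1}}\Bigr),$$
and the $z$-derivative gives
$$0=\frac{\partial_{z}g}{g^{3}}\Bigl(-4\mathcal B(z)+\tfrac{2(\gamma+1)\mathcal S(z)}{g^{\gamma-1}}\Bigr)+\frac{2\mathcal B'(z)}{g^{2}}-\frac{2\mathcal S'(z)}{g^{\gamma+1}}.$$
Dividing these two equalities eliminates the common bracket (which is nonzero on the subsonic branch, being a multiple of $\partial_{\rho}\mathfrak t$ at $\rho=1/g$) and yields exactly \eqref{dzg dtg}.

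The only subtle point is checking that the implicit function argument really applies uniformly on the open set $\{0\leq t<\mathfrak t_{c}(z)\}$, which amounts to verifying that the bracket $-4\mathcal B(z)+2(\gamma+1)\mathcal S(z)g^{1-\gamma}$ stays uniformly negative on this branch and to confirming that the extended $\mathcal B,\mathcal S$ still satisfy the inequality $\mathcal B(z)/\mathcal S(z)>(\bar\rho_{*})^{\gamma-1}$ globally in $z$ (available from \eqref{B/S}); once that is in hand the rest is straightforward calculus, so I anticipate no substantive obstacle.
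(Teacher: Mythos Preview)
Your proposal is correct and follows essentially the same approach as the paper: both invert $\mathfrak t(\rho,z)$ on the subsonic branch via the implicit/inverse function theorem using $\partial_\rho\mathfrak t<0$ there, and both obtain \eqref{dzg dtg} by differentiating the defining identity in $t$ and $z$ and eliminating the common factor (the paper does this in the $\rho$ variable and then substitutes $\rho=1/g$, whereas you differentiate directly in $g$, which is cosmetically different but the same computation). Your final remark about uniform negativity of the bracket and invoking \eqref{B/S}/\eqref{rho_bound} is slightly more than the lemma itself requires---pointwise nonvanishing of $\partial_\rho\mathfrak t$ on the subsonic branch suffices for the implicit function step, and \eqref{g bound} is just the tautology $\rho\in(\varrho_c(z),\varrho_m(z)]\Rightarrow g=1/\rho\in[g_*,g^*]$---but there is no error.
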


\begin{proof}
\emph{(i).} From the expression \eqref{t}, straightforward computations give
\begin{equation}\label{dF}
\p_\r\t(\r, z)=4\r\left(\B(z)-\frac{\g+1}2\r^{\g-1}\S(z)\right).
\end{equation}
Now, with $\varrho_c$ and $\varrho_m$ defined in (\ref{rho cm}) one has
\begin{enumerate}
  \item[\rm (a)] $\r\mapsto\t(\r,z)$ achieves its maximum $\t_c(z)$ at $\r=\varrho_c(z)$;
  \item[\rm (b)] $\p_\r\t(\r,z)<0$ when $\varrho_c(z)<\r\leq\varrho_m(z)$.
\end{enumerate}
Thus by the inverse function theorem, if $\rho\in(\varrho_c(z),\varrho_m(z)]$, one can express $\rho$ as a function of $t:=\t(\rho,z)\in[0,\t_c(z))$ and $z$, that is, $\rho=\rho(t,z)$. Let
\begin{equation}\label{g_def}
g(t,z):=\frac1{\r(t,z)}, \q t\in[0,\mathfrak t_c(z)),\, z\in \R.
\end{equation}
The function $g$ is smooth in $t$ by the inverse function theorem and $C^{1,1}$ in $z$ by the $C^{1,1}$ regularity of $\B$ and $\S$.
This together with \eqref{BS relation} gives \eqref{rho g}. Consequently \eqref{g bound} holds.

\emph{(ii).}
Differentiating \eqref{t} with respect to $t:=\t(\rho,z)$ and $z$, respectively, one gets
\begin{equation}\label{bdt}
	\frac12=\frac{\p_t\r}{\r}(t-(\g-1)\r^{\g+1}\S(z))
\end{equation}
and
\begin{equation}\label{bdz}
	\frac{\p_z\r}{\r}(t-(\g-1)\r^{\g+1}\S(z))=-\r^2(\B'(z)-\r^{\g-1}\S'(z)).
\end{equation}
Combining \eqref{bdt} and \eqref{bdz} together gives
$$\p_z\r=-2\r^2\p_t\r(\B'(z)-\r^{\g-1}\S'(z)).$$
Thus by \eqref{rho g}, the equality \eqref{dzg dtg} holds. This completes the proof of the lemma.
\end{proof}

The following lemma gives the equation of the stream function in the subsonic region.

\begin{lemma}\label{stream formulation lem}
Let $(\r,\u,P)$ be a solution to the system consisting of  \eqref{Euler}$_1$, \eqref{BS} and \eqref{vorticity}. Assume $(\r,\u,P)$ satisfies the assumptions in Proposition \ref{prop:equivalent_system}. Then in the subsonic region $|\nabla\psi|^2<\t_c(\psi)$ where $\t_c$ is defined in \eqref{tc}, the stream function $\psi$ solves
\begin{equation}\label{elliptic equ}
\n\c(g(|\n \psi|^2,\psi)\n \psi)=\frac{\B'(\psi)}{g(|\n \psi|^2,\psi)}-\frac{\S'(\psi)}{\g g(|\n \psi|^2,\psi)^\g},
\end{equation}
where $g$ is defined in \eqref{g_def}, $\B$ is the Bernoulli function  defined in \eqref{Bpsi}, and $\S$ is the entropy function defined in \eqref{Spsi}. Moreover, the equation \eqref{elliptic equ} is elliptic if and only if $|\n\psi|^2<\t_c(\psi)$.
\end{lemma}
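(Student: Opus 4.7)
The plan is to derive the PDE by turning the left-hand side into (minus) the vorticity using \eqref{psi gradient} and Lemma \ref{g}, and then to read off the right-hand side directly from \eqref{vorticity}. For the ellipticity criterion I will linearize the divergence form operator at a generic $\psi$ and compute the two eigenvalues of the resulting symmetric $2\times 2$ principal symbol, using the identity \eqref{bdt} to rewrite one of them in terms of $\partial_{\rho}\mathfrak t$.

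First, since we work in the subsonic range $\rho\in(\varrho_c(\psi),\varrho_m(\psi)]$, Lemma \ref{g}(i) gives $\rho=1/g(|\nabla\psi|^2,\psi)$, so
\[
g(|\nabla\psi|^2,\psi)\,\nabla\psi=\frac{\nabla\psi}{\rho}=(-u_2,u_1)
\]
by \eqref{psi gradient}. Taking the divergence,
\[
\nabla\cdot\bigl(g(|\nabla\psi|^2,\psi)\nabla\psi\bigr)=-(\partial_{x_1}u_2-\partial_{x_2}u_1)=-\omega.
\]
Substituting the expression \eqref{vorticity} for $\omega$ and again using $\rho=1/g$ gives exactly
\[
\nabla\cdot\bigl(g(|\nabla\psi|^2,\psi)\nabla\psi\bigr)=\rho\mathcal B'(\psi)-\frac{\rho^{\gamma}}{\gamma}\mathcal S'(\psi)=\frac{\mathcal B'(\psi)}{g(|\nabla\psi|^2,\psi)}-\frac{\mathcal S'(\psi)}{\gamma\, g(|\nabla\psi|^2,\psi)^{\gamma}},
\]
which is \eqref{elliptic equ}. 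This part is a direct computation; nothing is routine but nothing is tricky.

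For ellipticity, expanding $\partial_{x_i}(g\,\psi_{x_i})$ with $g=g(|\nabla\psi|^2,\psi)$ shows that the coefficient matrix of the second-order derivatives of $\psi$ is
\[
A_{ij}=g\,\delta_{ij}+2(\partial_t g)\,\psi_{x_i}\psi_{x_j},
\]
whose eigenvalues are $g$ (in the direction orthogonal to $\nabla\psi$) and $\lambda:=g+2(\partial_t g)\,|\nabla\psi|^2$ (in the direction of $\nabla\psi$). By \eqref{g bound} the first eigenvalue $g$ is strictly positive, so the whole question is the sign of $\lambda$.

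To compute $\lambda$, write $t=|\nabla\psi|^2=\mathfrak t(\rho,\psi)$. Differentiating $g=1/\rho$ in $t$ gives $\partial_t g=-\rho^{-2}\partial_t\rho$, and from \eqref{bdt},
\[
\partial_t\rho=\frac{\rho}{2\bigl(t-(\gamma-1)\rho^{\gamma+1}\mathcal S(\psi)\bigr)}.
\]
The denominator can be rewritten with \eqref{t} and \eqref{dF} as $\rho\,\partial_{\rho}\mathfrak t(\rho,\psi)/2$, yielding
\[
\lambda=g+2t\,\partial_t g=\frac{1}{\rho}-\frac{2t}{\rho^{2}\,\partial_{\rho}\mathfrak t(\rho,\psi)}\cdot\frac{\rho}{1}\;=\;-\frac{2(\gamma-1)\rho^{\gamma-1}\mathcal S(\psi)}{\partial_\rho\mathfrak t(\rho,\psi)}=-\frac{2c^{2}}{\partial_\rho\mathfrak t(\rho,\psi)},
\]
where the last identity uses $c^{2}=(\gamma-1)\rho^{\gamma-1}\mathcal S(\psi)$ (obtained by combining \eqref{C and M}, \eqref{entropy}). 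Since $c^{2}>0$, the sign of $\lambda$ is opposite to that of $\partial_\rho\mathfrak t$; by (a)--(b) in the proof of Lemma \ref{g}(i), $\partial_\rho\mathfrak t<0$ exactly on $(\varrho_c(\psi),\varrho_m(\psi)]$, which is exactly the range in which $g(\cdot,\psi)$ is defined, i.e.\ $|\nabla\psi|^2<\mathfrak t_c(\psi)$. Hence $A$ is positive definite precisely on this set, proving the elliptic/degenerate dichotomy.

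The only mildly delicate point is the manipulation that turns $t-(\gamma-1)\rho^{\gamma+1}\mathcal S$ into $\rho\,\partial_\rho\mathfrak t/2$ and then recognizes $(\gamma-1)\rho^{\gamma-1}\mathcal S$ as $c^{2}$; once that identification is in place, the equivalence between ellipticity and subsonicity is immediate from the sign information on $\partial_\rho\mathfrak t$ already recorded in the proof of Lemma \ref{g}.
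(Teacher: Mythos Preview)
Your argument is correct and follows essentially the same route as the paper: the equation is obtained by writing $\nabla\cdot(g\nabla\psi)=-\omega$ and invoking \eqref{vorticity}, and ellipticity is checked by computing the eigenvalues $g$ and $g+2t\,\partial_t g$ of the principal symbol. Your explicit identification $\lambda=-2c^{2}/\partial_\rho\mathfrak t$ is a clean variant of the paper's argument (which instead uses \eqref{dtg} to show $\partial_t g\ge 0$ and $\partial_t g\to+\infty$ at the sonic state); note, however, that the factor ``$\cdot\frac{\rho}{1}$'' in your displayed computation of $\lambda$ is spurious and should be deleted---without it the intermediate expression $\frac{1}{\rho}-\frac{2t}{\rho^{2}\partial_\rho\mathfrak t}$ reduces correctly to $-2c^{2}/\partial_\rho\mathfrak t$.
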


\begin{proof}
Expressing the vorticity $\o$ in terms of the stream function $\psi$, and using \eqref{vorticity} one has
\begin{equation*}
	-\n\c\Big(\frac{\n\psi}{\r}\Big)=\o=-\r\B'(\psi)+\frac{\r^\g}{\g}\S'(\psi).
\end{equation*}
In view of \eqref{rho g} the above equation can be rewritten as  \eqref{elliptic equ}.

The equation \eqref{elliptic equ} can be written in the nondivergence form as follows
$$a^{ij}(\n\psi,\psi)\p_{ij}\psi+\p_zg(|\n\psi|^2,\psi)|\n\psi|^2=\frac{\B'(\psi)}{g(|\n \psi|^2,\psi)}-\frac{\S'(\psi)}{\g g(|\n \psi|^2,\psi)^\g},$$
where the matrix
$$(a^{ij})=g(|\n\psi|^2,\psi)I_2+2\p_t g(|\n\psi|^2,\psi)\n\psi\otimes\n\psi$$
is symmetric with the eigenvalues
$$\b_0=g(|\n \psi|^2,\psi) \q{\rm and}\q \b_1=g(|\n\psi|^2,\psi)+2\p_tg(|\n\psi|^2,\psi)|\n\psi|^2.$$
Differentiating the identity $t=\t(\frac{1}{g(t,z)},z)$ gives
\begin{equation}\label{dtg}
\p_tg(t,z)=-\frac{g(t,z)^2}{\p_\r\t(\frac1{g(t,z)},z)} \q\text{for } t\in[0,\t_c(z)),\, z\in\R.
\end{equation}
This implies
$$\p_tg(t,z)\geq0 \q {\rm and}\q \lim_{t\to\t_c(z)-}\p_tg(t,z)=+\infty.$$
Thus $\b_0$ has uniform upper and lower bounds
(cf. \eqref{g bound} and \eqref{rho_bound}),
and $\b_1$ has a uniform lower bound but blows up when $|\n\psi|^2$ approaches $\t_c(\psi)$. Therefore, the equation (\ref{elliptic equ}) is elliptic as long as $|\n\psi|^2<\t_c(\psi)$, and is singular when $|\n\psi|^2=\t_c(\psi)$. This completes the proof of the lemma.
\end{proof}

\subsection{Reformulation for the jet flows in terms of the stream function}

Let
\begin{equation}\label{Ld}
\Ld:=\r_0\sqrt{2\B(Q)-2\r_0^{\g-1}\S(Q)} \q {\rm with}\q \r_0:=\left(\frac{\g\ul P}{(\g-1)\S(Q)}\right)^{\frac1\g}
\end{equation}
be the constant momentum on the free boundary, where
$\B$ is defined in \eqref{Bpsi}, $\S$ is defined in \eqref{Spsi}, and 
$\ul P>0$ is the pressure on the free boundary as in Problem \ref{problem1}.
Then Problem {\ref{problem1}} can be solved as long as the following problem in terms of the stream function is solved.

\begin{problem}\label{jet problem}
Given a density $\bar\rho\in C^{1,1}([0,\bar H])$ and a horizontal velocity $\bar u\in C^{1,1}([0,\bar H])$ satisfying \eqref{thm_condition1}. Given a constant pressure $\bar P>0$ of the flow at the upstream. Find a triple $(\psi,\G_\psi,\Ld)$ satisfying $\psi\in C^{2,\alpha}(\{\psi<Q\})\cap C^{1}(\overline{\{\psi<Q\}})$ ($\alpha\in(0,1)$), $\p_{x_1}\psi>0$ in $\{0<\psi<Q\}$ and
\begin{equation}\label{jet equ}
\begin{cases}\begin{split}
&\n\c(g(|\n \psi|^2,\psi)\n \psi)=\frac{\B'(\psi)}{g(|\n \psi|^2,\psi)}-\frac{\S'(\psi)}{\g g(|\n \psi|^2,\psi)^\g} &&\text{ in } \{\psi<Q\},\\
&\psi=0 &&\text{ on } \N_0,\\
&\psi=Q &&\text{ on } \N_1\cup \G_\psi,\\
&|\n\psi|=\Ld &&\text{ on } \G_\psi,
\end{split}\end{cases}\end{equation}
where the free boundary $\G_\psi:=\p\{\psi<Q\}\backslash \N_1$. Furthermore, the free boundary $\G_\psi$ and the flow
\begin{equation}\label{rup_psi}
	(\r, \u, P)=\left(\frac1{g(|\n\psi|^2,\psi)}, \, g(|\n\psi|^2,\psi)\nabla^{\perp}\psi, \, \frac{(\gamma-1)\S(\psi)}{\gamma g(|\n\psi|^2,\psi)^\gamma}\right)
\end{equation}
are expected to have the following properties:
\begin{enumerate}
   \item The free boundary $\Gamma_\psi$ is given by a graph $x_1=\Upsilon(x_2)$, $x_2\in (\ubar{H}, 1]$ for some $C^{2,\alpha}$  function $\Upsilon$ and some $\ubar{H}\in (0,1)$.
	
	\item The free boundary $\Gamma_\psi$ fits the nozzle at $A=(0,1)$ continuous differentiably, i.e. $\Upsilon(1)=\Theta(1)$ and $\Upsilon'(1)=\Theta'(1)$.

	\item For $x_1$ sufficiently large, the free boundary is also an $x_1$-graph, i.e., it can be written as $x_2=f(x_1)$ for some $C^{2,\alpha}$ function $f$. Furthermore, at downstream $x_1\to\infty$, one has
	$$\lim_{x_1\to \infty}f(x_1)=\ul{H} \q\text{and}\q \lim_{x_1\to\infty}f'(x_1)=0.$$
	
	\item The flow is subsonic in the flow region, i.e. $|\nabla\psi|^2<\t_c(\psi)$ in $\{\psi<Q\}$, where $\t_c$ is defined in \eqref{tc}.
	
	\item At the upstream the flow satisfies the asymptotic behavior \eqref{asymptotic rup}. At the downstream the flow satisfies 
	$$\lim_{x_1\to\infty}(\r,\u,P)=(\ul{\r},\ul{u},0,\ul P)$$
	for some positive functions $\ul{\r}=\ul{\r}(x_2)$ and $\ul{u}=\ul{u}(x_2)$, and some constant $\ul P>0$.
	
\end{enumerate}
\end{problem}

As in \cite[Proposition 2.5]{LSTX2023}, we have the following observation, which asserts that $(\rho,\mathbf u,P)$ defined in \eqref{rup_psi} is a solution of the Euler system \eqref{Euler} even if $\rho,\mathbf u,P$ are only $C^{1,\alpha}$.
 
\begin{proposition}\label{prop:equiv_sol}
Given a density $\bar\rho\in C^{1,1}([0,\bar H])$ and a horizontal velocity $\bar u\in C^{1,1}([0,\bar H])$ satisfying \eqref{thm_condition1}. Given a constant pressure $\bar P>0$ of the flow at the upstream. Assume that a function $\psi\in C^{2,\alpha}(\{\psi<Q\})\cap C^{1}(\overline{\{\psi<Q\}})$ is a subsonic solution of \eqref{jet equ}, where $\alpha\in(0,1)$. Then the flow $(\rho,\mathbf u,P)$ defined in \eqref{rup_psi} solves the Euler system \eqref{Euler} in the flow region $\{\psi<Q\}$.
\end{proposition}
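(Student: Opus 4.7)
The plan is to verify each equation of the Euler system \eqref{Euler} by exploiting that every quantity in \eqref{rup_psi} is a composition with $\psi$. First, the continuity equation \eqref{Euler}$_1$ is automatic since \eqref{rup_psi} yields $\r\u = \n^\perp\psi$, which is divergence-free. Second, substituting \eqref{rup_psi} into the definitions \eqref{Bernoulli} of $B$ and \eqref{entropy} of $S$ and invoking the identity \eqref{BS relation} defining $g$ (Lemma \ref{g}), one checks directly that $B\equiv\B(\psi)$ and $S\equiv\S(\psi)$; since $\u=g\n^\perp\psi$ is orthogonal to $\n\psi$, the Bernoulli and entropy conservation laws \eqref{BS} both follow. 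Third, computing $\o=\p_{x_1}u_2-\p_{x_2}u_1$ directly from \eqref{rup_psi} produces $\o=-\n\c(g(|\n\psi|^2,\psi)\n\psi)$, which combined with the stream function equation \eqref{jet equ}$_1$ yields the vorticity identity \eqref{vorticity}.

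At this stage $(\r,\u,P)$ satisfies the reduced system \eqref{Euler}$_1$, \eqref{BS} and \eqref{vorticity}, and I would invoke Proposition \ref{prop:equivalent_system} to conclude that the full Euler system \eqref{Euler} holds. Verifying its hypotheses is routine: the slip condition $\u\c\mn=0$ on $\N_0\cup\N_1\cup\G_\psi$ holds because $\psi$ is constant along each boundary component; the sign condition $u_2=-g\p_{x_1}\psi<0$ in the flow region follows from $\p_{x_1}\psi>0$ on $\{0<\psi<Q\}$ together with $g>0$ (see \eqref{g bound}); and the upstream asymptotics \eqref{asymptotic rup} are built into Problem \ref{jet problem}.

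The main obstacle is a regularity mismatch, since Proposition \ref{prop:equivalent_system} is stated for $C^{1,1}$ flows while \eqref{rup_psi} only yields $(\r,\u,P)\in C^{1,\a}$. I would overcome this by retracing the proof of Proposition \ref{prop:equivalent_system} and observing that the only step demanding regularity beyond $C^{1,\a}$ is the differentiation of $\o/\r-\r^{\g-1}\S'(\psi)/\g$; by \eqref{vorticity} this quantity equals $-\B'(\psi)$, and since $\B\in C^{1,1}$ the chain rule yields the pointwise a.e.\ gradient $-\B''(\psi)\n\psi\in L^\infty$, which is all that is required for the identities in that proof to make sense and carry through to the momentum equations \eqref{B_equ}. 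Finally, the energy equation in \eqref{Euler} drops out of the algebraic identity $\r E+P=\r B$: expanding gives $\n\c(\r\u E+P\u)=\n\c(B\r\u)=B\n\c(\r\u)+\r\u\c\n B=0$ by the continuity equation and the Bernoulli law in \eqref{BS}.
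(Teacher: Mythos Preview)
Your reduction to the system \eqref{Euler}$_1$, \eqref{BS}, \eqref{vorticity} is correct, but the detour through Proposition~\ref{prop:equivalent_system} introduces hypotheses that are \emph{not} part of the statement you are asked to prove. Proposition~\ref{prop:equiv_sol} only assumes that $\psi$ solves the system \eqref{jet equ}; neither the sign condition $\p_{x_1}\psi>0$ nor the upstream asymptotics \eqref{asymptotic rup} are contained in \eqref{jet equ}. In Problem~\ref{jet problem} these are listed among the properties the solution is ``expected to have'', and in the paper they are established only later (Sections~\ref{Remove the domain truncation} and~\ref{Asymptotic behavior}). The proof of Proposition~\ref{prop:equivalent_system} uses the asymptotics in an essential way---to pin down the constant value of the potential $\Phi$---so your argument, as written, proves a strictly weaker statement than the proposition claims.

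The paper sidesteps both this logical issue and the $C^{1,1}$-versus-$C^{1,\alpha}$ regularity worry you raised by taking a more direct route. From the definition of $g$ one has the pointwise identity $\B(\psi)=\tfrac{|\u|^2}{2}+\rho^{\gamma-1}\S(\psi)$ in the flow region; differentiating this in $x_1$ and $x_2$ (which needs only $\psi\in C^{2,\alpha}$ and $\B,\S\in C^1$) and using $\p_{x_i}P/\rho=\p_{x_i}(\rho^{\gamma-1}\S(\psi))-\tfrac{\rho^{\gamma-1}}{\gamma}\S'(\psi)\p_{x_i}\psi$ gives
\[
u_1\p_{x_i}u_1+u_2\p_{x_i}u_2+\tfrac{\rho^{\gamma-1}}{\gamma}\S'(\psi)\p_{x_i}\psi+\tfrac{\p_{x_i}P}{\rho}=\B'(\psi)\p_{x_i}\psi,\qquad i=1,2.
\]
Now the stream function equation in \eqref{jet equ} rewrites as $\B'(\psi)-\tfrac{\rho^{\gamma-1}}{\gamma}\S'(\psi)=-\omega/\rho$, and inserting this together with $\n\psi=(-\rho u_2,\rho u_1)$ into the displayed identities yields the momentum equations \eqref{B_equ} directly---no potential $\Phi$, no streamline argument, no asymptotics, and no derivative of $\omega$ is needed. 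The energy equation then follows exactly as you wrote. This is the content of the paper's proof; your derivation of \eqref{Euler}$_1$, \eqref{BS}, \eqref{vorticity} is the right starting point, but the final step should be this algebraic combination rather than an appeal to Proposition~\ref{prop:equivalent_system}.
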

\begin{proof}
	Due to the regularity of the function $\psi$,  the functions $\rho$, $\mathbf u:=(u_1,u_2)$ and $P$ defined in \eqref{rup_psi} belong to $C^{1,\alpha}(\{\psi<Q\})\cap C(\overline{\{\psi<Q\}})$. Since $\rho$ and $\mathbf u$  satisfy \eqref{psi gradient}, it follows from the definition of the function $g$ in Lemma \ref{g} (i) that  the density $\rho=1/g$ satisfies
	\begin{align*}
		\frac{u_1^2+u_2^2}{2}+\r^{\g-1}\S(\psi)=	\frac{|\nabla\psi|^2}{2\rho^2}+\r^{\g-1}\S(\psi)=\mathcal{B}(\psi)
	\end{align*}
	in the flow region.
	Differentiating the above equality with respect to $x_1$ and $x_2$, respectively, one gets
	\begin{equation}\label{eq1}
		u_1\partial_{x_1}u_1+u_{2}\partial_{x_1}u_2 +\frac{\r^{\g-1}}\g\S'(\psi)\p_{x_1}\psi+\frac{\p_{x_1}P}{\r}=\B'(\psi)\partial_{x_1}\psi,
	\end{equation}
	and
	\begin{align}\label{eq2}
		u_1\partial_{x_2}u_1+u_2\partial_{x_2}u_{2}+\frac{\r^{\g-1}}\g\S'(\psi)\p_{x_2}\psi+\frac{\p_{x_2}P}{\r}=\B'(\psi)\partial_{x_2}\psi.
	\end{align}
	Note that by the quasilinear equation in \eqref{jet equ} one has
	$$\B'(\psi)-\frac{\r^{\g-1}}{\g}\S'(\psi)=-\frac{\p_{x_1}u_2-\p_{x_2}u_1}{\rho}.$$
	Thus the equalities \eqref{eq1} and \eqref{eq2} together with \eqref{psi gradient} yield \eqref{B_equ}. Note that $\rho$ and $\mathbf u$ satisfy the continuity equation \eqref{Euler}$_1$. Then the Euler system \eqref{Euler} follows from \eqref{Euler}$_1$, \eqref{B_equ} and the equality $\rho\mathbf u\cdot \nabla \B(\psi)=0$ where $\B(\psi)$ satisfies \eqref{BS relation}. This finishes the proof of the proposition.
\end{proof}

\subsection{Subsonic truncation}\label{subsec_subsonic truncation}

One of the major difficulties to solve Problem \ref{jet problem} is that the equation \eqref{elliptic equ} becomes degenerate as the flow approaches the sonic state, so we use a subsonic truncation to avoid this degeneracy. Let $\varpi:\R\to[0,1]$ be a smooth nonincreasing function such that
$$\varpi(s)=\begin{cases}
1& \text{if } s\leq-1,\\
0& \text{if } s\leq-\frac12,
\end{cases}
\q
{\rm and}\q
|\varpi'|+|\varpi''|\leq8.$$
For fixed $\v\in(0,1/4)$, 
let $\varpi_\v(s):=\varpi((s-1)/\v)$. Define
\begin{equation}\label{g modified}
g_\v(t,z)=g(t,z)\varpi_\v(t/\t_c(z))+(1-\varpi_\v(t/\t_c(z)))g^*,
\end{equation}
where $\t_c(z)$ is given by \eqref{tc} and $g^*$ is the upper bound of $g$ in \eqref{g bound}. The properties of $g_\v$ are summarized in the following lemma.

\begin{lemma}\label{g properties}
Let $g$ be the function defined in Lemma \ref{g} and let $g_\v$ be the subsonic truncation of $g$ defined in (\ref{g modified}). Then the function $g_\v(t,z)$ is smooth with respect to $t$ and $C^{1,1}$ with respect to $z$. Furthermore, suppose the upstream  pressure $\bar P$ of the flow satisfies \eqref{Pbar_*}. There exist positive constants $c_*=c_*(\g)$ and $c^*=c^*(\g,\bar u_*)$ such that for all $t\geq 0$, one has
  \begin{align}
  &c_*(\bar\rho^*)^{-1}\leq g_\v(t,z)\leq (c_*\bar\rho_*)^{-1}, \label{gm}\\
  &c_*(\bar\rho^*)^{-1}\leq g_\v(t,z)+2\p_tg_\v(t,z)t\leq (c_*\v\bar\rho_*)^{-1}, \label{g dtgmt}\\
  &|\p_zg_\v(t,z)|\leq  c^*\v^{-1}\frac{(\bar\rho^*)^3}{\bar P(\bar\rho_*)^4}\left(\k_0+\frac{\bar P\k_1}{(\bar\rho_*)^2}\left(1+\left(\frac{\bar\rho^*}{\bar\rho_*}\right)^{\g-1}\right)\right),\label{dzgm1}\\
  &|t\p_zg_\v(t,z)|\leq c^*\v^{-1}\left(\frac{\bar\rho^*}{\bar\rho_*}\right)^5\left(\k_0+\frac{\bar P\k_1}{(\bar\rho_*)^2}\left(1+\left(\frac{\bar\rho^*}{\bar\rho_*}\right)^{\g-1}\right)\right),\label{dzgm2}
  \end{align}
 where $\bar\rho_*$ is defined in \eqref{thm_condition1}, and $\bar\rho^*$ is defined in \eqref{rhou_bar_upper}.
\end{lemma}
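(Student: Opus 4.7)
My plan is to treat $g_\v$ as a smooth convex combination of $g$ and the constant $g^\ast$, with weight depending on $t$ and $z$ only through $t/\t_c(z)$, and to reduce every claim to properties of $g$ from Lemma \ref{g} together with the explicit bounds \eqref{rho_bound}--\eqref{tc_deri2} on $\varrho_c$, $\varrho_m$, $\t_c$ and $\t_c'$. Smoothness in $t$ and $C^{1,1}$-regularity in $z$ follow directly: $\varpi_\v(t/\t_c(z))$ is supported in $\{t\leq(1-\v/2)\t_c(z)\}$, a region strictly inside the subsonic set where $g$ has the regularity of Lemma \ref{g}, and $\t_c\in C^{1,1}(\R)$. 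The two-sided bound \eqref{gm} is then immediate because $g_\v$ is pointwise between $g_\ast$ and $g^\ast$, both of which are controlled via \eqref{g bound} and \eqref{rho_bound}.

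For the ellipticity estimate \eqref{g dtgmt} I would differentiate to obtain
\begin{equation*}
g_\v+2t\,\p_t g_\v=\varpi_\v\bigl(g+2t\p_t g\bigr)+(1-\varpi_\v)\,g^\ast+\frac{2t}{\t_c(z)}\,(g-g^\ast)\,\varpi_\v'\!\bigl(t/\t_c(z)\bigr).
\end{equation*}
Outside the support of $\varpi_\v$ the quantity equals $g^\ast$. On the support, i.e.\ $t\leq(1-\v/2)\t_c(z)$, the key input is the Taylor expansion of $\t(\cdot,z)$ around $\varrho_c(z)$: since $\p_\r\t(\varrho_c,z)=0$ and $\p_\r^2\t(\varrho_c,z)=-4(\g-1)\B(z)$ by \eqref{dF}, one gets the uniform separation $|\rho-\varrho_c(z)|\geq c\sqrt{\v\,\t_c(z)/\B(z)}$, and plugging this into \eqref{dtg} yields $|\p_t g|=O(\v^{-1/2})$ with constants controlled by $\bar\rho^\ast/\bar\rho_\ast$. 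Combined with $g\leq 1/(c\bar\rho_\ast)$ and $t\leq\t_c$ this gives the claimed $\v^{-1}$ upper bound, with room to spare. The positive lower bound survives because each of the three summands is nonnegative: the first by $\p_t g\geq 0$, the second trivially, and the third because $g\leq g^\ast$ together with $\varpi_\v'\leq 0$.

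For \eqref{dzgm1}--\eqref{dzgm2} I would compute
\begin{equation*}
\p_z g_\v=\varpi_\v\,\p_z g-(g-g^\ast)\,\varpi_\v'\!\bigl(t/\t_c(z)\bigr)\,\frac{t\,\t_c'(z)}{\t_c(z)^2},
\end{equation*}
substitute the identity \eqref{dzg dtg} into the first term to replace $\p_z g$ by $-2\p_t g\,(\B'-g^{1-\g}\S')/g^2$, and then combine the bound for $|\p_t g|$ from the previous paragraph, the $L^\infty$-estimates for $\B'$, $\S'$ from \eqref{k0_BS}, the bound for $g$ from \eqref{gm}, and the bounds for $\t_c$, $\t_c'$ from \eqref{tc_bound}--\eqref{tc_deri1}. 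The factor $t/\t_c(z)^2$ in the cutoff term is controlled by $1/\t_c(z)$ since $t\leq\t_c(z)$. This produces \eqref{dzgm1}; multiplying through by $t$ and again using $t\leq\t_c(z)$ absorbs one factor of $\t_c(z)$ and yields \eqref{dzgm2} after rebalancing the $\bar\rho^\ast/\bar\rho_\ast$ ratios.

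The main obstacle is the ellipticity estimate \eqref{g dtgmt}. The quantity $g+2t\p_t g$ is exactly the larger eigenvalue $\beta_1$ in Lemma \ref{stream formulation lem} and genuinely blows up at the sonic threshold $t=\t_c(z)$. Simultaneously preserving a uniform positive lower bound after the convex combination with $g^\ast$ and quantifying the blow-up as $O(\v^{-1})$ is the one step that is not routine bookkeeping; it hinges on the quantitative nondegeneracy of $\p_\r\t(\rho,z)$ away from $\varrho_c(z)$, made effective through the Taylor expansion above and the fact that $\varpi_\v$ confines us to $t\leq(1-\v/2)\t_c$.
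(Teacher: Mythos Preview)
Your proposal is correct and tracks the paper's proof closely for \eqref{gm}, \eqref{dzgm1}, and \eqref{dzgm2}; the one place where you diverge is the upper bound in \eqref{g dtgmt}. You control $\p_t g$ on $\{t\le(1-\v/2)\t_c\}$ via a second-order Taylor expansion of $\t(\cdot,z)$ at $\varrho_c(z)$, extracting $|\rho-\varrho_c|\gtrsim\sqrt{\v}$ and converting this into a lower bound on $|\p_\r\t|$ (which indeed works, using $\p_\r\t=2(\g+1)\rho\S(\varrho_c^{\g-1}-\rho^{\g-1})$ and the mean value theorem on $\rho\mapsto\rho^{\g-1}$). The paper instead uses the one-line algebraic estimate
\[
\p_\r\t(\rho,z)=\frac{2}{\rho}\bigl(\t(\rho,z)-(\g-1)\rho^{\g+1}\S(z)\bigr)\le\frac{2}{\rho}\bigl(\t(\rho,z)-\t_c(z)\bigr)\le-\frac{\v\t_c(z)}{\rho},
\]
which immediately gives $0<\p_t g\le g/(\v\t_c)$ without any Taylor bookkeeping. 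Your route yields the sharper $\p_t g=O(\v^{-1/2})$, but since the cutoff term $\frac{2t}{\t_c}(g-g^*)\varpi_\v'$ already contributes $O(\v^{-1})$, this gain does not propagate to the final bound---so the paper's argument is both shorter and sufficient. Your treatment of the lower bound (all three summands nonnegative, the first two forming a convex combination of quantities $\ge g_*$) is fine; the paper packages the same observation as $\p_t g_\v\ge 0$, hence $g_\v+2t\p_t g_\v\ge g_\v\ge g_*$.
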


\begin{proof}
	
	It follows from Lemma \ref{g} and the definition of $\varpi_\v$ that $g_\v$ is smooth with  respect to $t$ and $C^{1,1}$ with respect to $z$.
	
    \emph{(i).}  Clearly \eqref{gm} follows directly from \eqref{g bound} and \eqref{g modified}.

	To show \eqref{g dtgmt},  we first claim that if $0\leq t\leq\left(1-\frac{\epsilon}2\right) \t_c(z)$ then \begin{equation}\label{eq:g_dt_bound}
		0<\p_t g(t,z)\leq \frac{g(t,z)}{\epsilon \t_c(z)},
	\end{equation}
	{where $\t_c$ is defined in \eqref{tc}.} In view of the expression of $\p_t g(t,z)$ in \eqref{dtg} it suffices to estimate $\p_{\rho}\t(\rho,z)$ at $\rho=\rho(t,z)$ for $0\leq t\leq\left(1-\frac{\epsilon}2\right) \t_c(z)$. In fact, it follows from \eqref{t} and \eqref{dF} that for $0\leq t\leq\left(1-\frac{\epsilon}2\right) \t_c(z)$  and $\rho\geq \varrho_c(z)$ it holds that
	\begin{align*}
			\p_\r\t\left(\rho,z\right)
		&{=}\frac{2}{\rho}\left(\t\left(\rho,z\right)-(\g-1)\rho^{\g+1}\S(z)\right)\leq\frac{2}{\rho}\left(\t\left(\rho,z\right)-(\g-1)\varrho_c(z)^{\g+1}\S(z)\right)\\
		&= \frac{2}{\rho}\left(\t\left(\rho,z\right)-\t_c(z)\right)\leq-\frac{\v\t_c(z)}{\rho}.
	\end{align*}
	Thus from \eqref{dtg} we conclude that
	\begin{equation*}
		0<\p_tg(t,z)=-\frac{g^2(t,z)}{\p_\rho\t(\rho, z)}\Big|_{\rho=\frac{1}{g(t,z)}}\leq \frac{g(t,z)}{\epsilon\t_c(z)},
	\end{equation*}
	which is the claimed inequality \eqref{eq:g_dt_bound}.
	With \eqref{eq:g_dt_bound} at hand, we thus have
	\begin{equation}\label{eq:g_dt}
		\begin{split}
			0\leq\p_tg_\epsilon(t,z) &= \partial_t g (t, z) \varpi_\epsilon\left(\frac{t}{\t_c(z)}\right)+(g(t, z)-g^*) \varpi'_\epsilon\left(\frac{t}{\t_c(z)}\right)\frac{1}{\t_c(z)}\\
			&\leq \frac{g(t,z)}{\epsilon \t_c(z)}+\frac{8g^\ast}{\epsilon \t_c(z)} \leq \frac{9 g^\ast}{\epsilon \t_c(z)}.
		\end{split}
	\end{equation}
	Note that
	\begin{equation}\label{gmdt_support}
		\p_tg_\epsilon(t,z)=0 \q \text{ in }\{(t,z):t>(1-\epsilon/2)\mathfrak t_c(z),\, z\in \R\}.
	\end{equation}
	Thus
	\begin{align*}
		0\leq t\p_tg_\epsilon(t,z)\leq\frac{9 g^\ast \t_c(z)}{\epsilon \t_c(z)}= \frac{9 g^\ast}{\epsilon}.
	\end{align*}
	Then the estimate \eqref{g dtgmt} follows directly from  \eqref{gm}.
	
	\emph{(ii).}
	 Direct computations yield
	\begin{equation}\label{gm_dz}
		\begin{aligned}
			&\p_z g_\epsilon (t,z)= \p_z g(t,z)\varpi_\epsilon\left(\frac{t}{\t_c(z)}\right) + (g^\ast-g(t,z))\varpi_\epsilon'\left(\frac{t}{\t_c(z)}\right) \frac{t\t_c'(z)}{\t_c(z)^2}\\
			\stackrel{\eqref{dzg dtg}}{=} & -2\p_t g(t,z)\frac{\B'(z)-g(t,z)^{1-\g}\S'(z)}{g(t,z)^2} \varpi_\epsilon\left(\frac{t}{\t_c(z)}\right)+ (g^\ast-g(t,z))\varpi_\epsilon'\left(\frac{t}{\t_c(z)}\right) \frac{t\t_c'(z)}{\t_c(z)^2}.
		\end{aligned}
	\end{equation}
	Note that both sums in the expression of $\p_tg_\epsilon$, cf. \eqref{eq:g_dt}, are nonnegative. Thus we get from  the expression of $\p_zg_\epsilon$ in \eqref{gm_dz} that
	\begin{equation}\label{eq:dzgm_dtgm}\begin{split}
			|\p_zg_\epsilon(t,z)|\leq& \max\left\{\frac{2|\B'(z)-g(t,z)^{1-\g}\S'(z)|}{g(t,z)^2},\frac{t\mathfrak t_c'(z)}{\mathfrak t_c(z)}\right\} \p_tg_\epsilon(t,z).
		\end{split}
	\end{equation}
	By the $L^\infty$-bounds of $\B'$ and $\S'$ in \eqref{k0_BS}, the bounds of $g$ in \eqref{g bound} and \eqref{rho_bound}, one has
	\begin{align}\label{lable_2}
	\frac{2|\B'(z)-g(t,z)^{1-\g}\S'(z)|}{g(t,z)^2}\leq \frac{C(\bar\rho^*)^2}{\bar\rho_*}\left(\k_0+\frac{\bar P\k_1}{(\bar\rho_*)^2}\left(1+\left(\frac{\bar\rho^*}{\bar\rho_*}\right)^{\g-1}\right)\right),
	\end{align}
	where $C=C(\g,\bar u_*)$.
	This together with the estimate for $|\t_c'|$ in \eqref{tc_deri1}, the estimate for $\p_tg_\v$ in \eqref{eq:g_dt}, the lower bound of $\t_c$ in \eqref{tc_bound}, and the support condition  of $\p_tg_\epsilon$ in \eqref{gmdt_support} yields  \eqref{dzgm1}.
	To show \eqref{dzgm2}, first we note that from \eqref{eq:dzgm_dtgm} and \eqref{gmdt_support} one has
	\begin{equation}\label{label_1}
		\p_zg_\epsilon(t,z)=0 \q \text{ in }\{(t,z):t>(1-\epsilon/2)\mathfrak t_c(z),\, z\in \R\}.
	\end{equation}
	Then \eqref{dzgm2} follows from \eqref{dzgm1} and the upper bound of $\t_c$ in \eqref{tc_bound}. 
	This finishes the proof of the lemma.
\end{proof}

\section{Variational formulation for the truncated  problem}\label{variational formulation for the free boundary problem}

In this section we show that the equation \eqref{elliptic equ}, which has a complicated memory term, can be written as an Euler-Lagrange equation of an energy functional. Thus  Problem \ref{jet problem} can be transformed into a domain variation problem as in the isentropic case (cf. \cite{LSTX2023}). This is the key to take advantage of the variational method developed by Alt, Caffarelli and Friedman to solve the jet problem.

In this section, we always assume that the upstream pressure $\bar P$ of the flow satisfies \eqref{Pbar_*}.

Let $\O$ be the domain bounded by $\N_0$ and $\N_1\cup([0,\infty)\times\{1\})$. Noticing that $\O$ is unbounded, we make an approximation by considering the problem in a series of truncated domains $\O_{\mu,R}:=\O\cap\{-\mu<x_1<R\}$, where $\mu$ and $R$ are two large positive numbers. To define the energy functional, we set
\begin{equation}\label{G}
G_\v(t,z):=\frac12\int_0^tg_\v(\tau,z)d\tau+\frac{\g-1}{\g}(g_\v(0,z)^{-\g}\S(z)-g_\v(0,Q)^{-\g}\S(Q)),
\end{equation}
which is smooth in $t$ and $C^{1,1}$ in $z$  by the regularity of $g_\v$ (cf. Lemma \ref{g properties}), and set
$$\Phi_\v(t,z):=-G_\v(t,z)+2\p_tG_\v(t,z)t.$$
Given a function $\psi^{\sharp}_{\mu,R}\in C(\p \O_{\mu,R})\cap H^{1}(\Omega_{\mu,R})$ with $0\leq\psi^{\sharp}_{\mu,R}\leq Q$, we consider the minimization problem:
\begin{equation}\label{variation problem}
	\text{find }\psi\in \K_{\psi^{\sharp}_{\mu,R}}  \ \text{ s.t. } \ J_{\mu,R,\Ld}^\v(\psi)= \inf_{\varphi\in \K_{\psi^{\sharp}_{\mu,R}}} J_{\mu,R,\Ld}^\v(\varphi),
\end{equation}
where the energy functional
\begin{equation}\label{J}
	J^\v_{\mu,R,\Ld}(\varphi):=\int_{\Omega_{\mu,R}}G_\v(|\n \varphi|^2,\varphi)+\ld_\v^2\chi_{\{\varphi<Q\}}dx, \q\q \ld_\v: =\sqrt{\Phi_\v(\Ld^2,Q)},
\end{equation}
with $\Ld$ defined in \eqref{Ld}, and the admissible set
\begin{equation}\label{K}
	\K_{\psi^{\sharp}_{\mu,R}}:=\{\varphi\in H^1(\O_{\mu,R}):\varphi=\psi^{\sharp}_{\mu,R}\text{ on } \p\O_{\mu,R}\}.
\end{equation}
Note that $\Phi_\v(\Ld^2,Q)>0$. Indeed, with the aid of the elliptic condition \eqref{g dtgmt}, a straightforward computation shows that $t\mapsto\Phi_\v(t,z)$ is monotone increasing:
\begin{align}\label{Phi_dt}
	0<\frac12c_*(\bar\rho^*)^{-1}\leq\p_t\Phi_\v(t,z)=\frac12g_\v(t,z)+\p_tg_\v(t,z)t\leq\frac12 (c_*\v\bar\rho_*)^{-1}.
\end{align}
Since $\Phi_\v(0,Q)=-G_\v(0,Q)=0,$ one has $\Phi_\v(t,Q)>0$ for all $t>0$.

The existence and H\"{o}lder regularity of minimizers for \eqref{variation problem} will be shown in Section \ref{sec:truncated problem}. Let us assume the existence and continuity of a minimizer $\psi$ firstly. We derive the Euler-Lagrange equation for the variational problem \eqref{variation problem} in the open set $\{\psi<Q\}$.

\begin{lemma}\label{EL}
Let $\psi$ be a minimizer of (\ref{variation problem}). Assume that $\O_{\mu,R}\cap\{\psi<Q\}$ is open. Then $\psi$ is a solution to
\begin{equation}\label{EL equ}
\n\c(g_\v(|\n\psi|^2,\psi)\n\psi)-\p_zG_\v(|\n\psi|^2,\psi)=0 \q\text{in } \O_{\mu,R}\cap\{\psi<Q\}.
\end{equation}
Furthermore, if $|\n\psi|^2\leq(1-\v)\t_c(\psi)$, it holds that
\begin{equation}\label{dzG_subsonic}
\p_zG_\v(|\n\psi|^2,\psi)=\frac{\B'(\psi)}{g(|\n \psi|^2,\psi)}-\frac{\S'(\psi)}{\g g(|\n \psi|^2,\psi)^\g}.
\end{equation}
\end{lemma}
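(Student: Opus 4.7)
The plan is a first-variation computation followed by an explicit identity for $\p_zG_\v$ in the subsonic range.

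For the first part, I would test the minimizer against an arbitrary $\phi\in C_c^\infty(\O_{\mu,R}\cap\{\psi<Q\})$. Because this set is open by hypothesis and $\operatorname{supp}\phi$ is compact, there is $\delta>0$ with $\psi\le Q-\delta$ on $\operatorname{supp}\phi$, so for $|\tau|$ sufficiently small the perturbation $\psi+\tau\phi$ is admissible and satisfies $\chi_{\{\psi+\tau\phi<Q\}}=\chi_{\{\psi<Q\}}$ pointwise in $\O_{\mu,R}$. The free-boundary term therefore contributes nothing to $\frac{d}{d\tau}J_{\mu,R,\Ld}^\v(\psi+\tau\phi)\big|_{\tau=0}$, and minimality gives
\begin{equation*}
\int_{\O_{\mu,R}}\bigl[2\p_tG_\v(|\n\psi|^2,\psi)\,\n\psi\cdot\n\phi+\p_zG_\v(|\n\psi|^2,\psi)\,\phi\bigr]\,dx=0.
\end{equation*}
Since $2\p_tG_\v=g_\v$ directly from \eqref{G}, this is precisely the weak form of \eqref{EL equ}.

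For the second part, when $|\n\psi|^2\le(1-\v)\t_c(\psi)$ the cutoff $\varpi_\v$ evaluated along the integration variable is identically $1$, hence $g_\v(\tau,\psi)=g(\tau,\psi)$ for every $\tau\in[0,|\n\psi|^2]$, and \eqref{G} reduces at $(t,z)=(|\n\psi|^2,\psi)$ to
\begin{equation*}
G_\v(t,z)=\frac12\int_0^t g(\tau,z)\,d\tau+\frac{\g-1}{\g}\bigl(g(0,z)^{-\g}\S(z)-g(0,Q)^{-\g}\S(Q)\bigr).
\end{equation*}
I would then apply Lemma \ref{g}(ii) to substitute $\p_zg(\tau,z)=-2\p_\tau g(\tau,z)(\B'(z)-g(\tau,z)^{1-\g}\S'(z))/g(\tau,z)^2$ and perform the change of variable $u=g(\tau,z)$, so that the integral collapses to
\begin{equation*}
\frac12\int_0^t \p_zg(\tau,z)\,d\tau=\B'(z)\left(\frac{1}{g(t,z)}-\frac{1}{g(0,z)}\right)-\frac{\S'(z)}{\g}\left(\frac{1}{g(t,z)^\g}-\frac{1}{g(0,z)^\g}\right).
\end{equation*}
What remains is to verify that the two residues at $u=g(0,z)$ cancel against $\frac{\g-1}{\g}\frac{d}{dz}(g(0,z)^{-\g}\S(z))$. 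The maximum-density relation $g(0,z)=1/\varrho_m(z)$ together with $\varrho_m(z)^{\g-1}=\B(z)/\S(z)$ from \eqref{rho cm} gives the clean identity $g(0,z)^{-\g}\S(z)=\varrho_m(z)\B(z)$, and logarithmic differentiation $\varrho_m'/\varrho_m=(\B'/\B-\S'/\S)/(\g-1)$ produces the required cancellation, leaving exactly \eqref{dzG_subsonic}.

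The main obstacle is this final cancellation: the prefactor $\frac{\g-1}{\g}$ and the specific combination $g(0,z)^{-\g}\S(z)$ in the definition \eqref{G} of $G_\v$ are tuned precisely so that all $\tau=0$ boundary contributions drop out, so that $\p_zG_\v$ matches the inhomogeneous term of the stream-function equation \eqref{elliptic equ}. Identifying the correct Lagrangian for the non-isentropic system is the substantive input, after which the first-variation step and the change-of-variable calculation are both routine.
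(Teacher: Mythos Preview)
Your proposal is correct and follows essentially the same route as the paper: the first-variation step is identical, and for \eqref{dzG_subsonic} both proofs invoke the identity \eqref{dzg dtg} to evaluate $\frac12\int_0^t\p_zg(\tau,z)\,d\tau$ and then verify that the $\tau=0$ boundary contributions cancel against $\frac{\g-1}{\g}\frac{d}{dz}\bigl(g(0,z)^{-\g}\S(z)\bigr)$. The only cosmetic difference is in that cancellation step: the paper computes $\p_z\rho(0,z)$ directly from \eqref{bdz} at $t=0$, whereas you use the equivalent identity $g(0,z)^{-\g}\S(z)=\varrho_m(z)\B(z)$ together with logarithmic differentiation of $\varrho_m$.
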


\begin{proof}
Let $\eta\in C^\infty_0(\O_{\mu,R}\cap\{\psi<Q\})$. Direct computations give
$$\frac{d}{d\vartheta}J_{\mu,R,\Ld}^\v(\psi+\vartheta\eta)\big|_{\vartheta=0}=\int_{\O_{\mu,R}}2\p_tG_\v(|\n\psi|^2,\psi)\n\psi\c\n\eta+\p_zG_\v(|\n\psi|^2,\psi)\eta.$$
By the definition of $G_\v$ in \eqref{G}, minimizers of \eqref{variation problem} satisfy the equation \eqref{EL equ}.

In addition, note that
\begin{equation}\label{gm=g}
	g_\v=g \q \text{ in } \mathcal R_\v:=\{(t,z): 0\leq t\leq(1-\v)\t_c(z),\, z\in\R\}.
\end{equation}
By the definitions of $G_\v$ in \eqref{G} and $g_\v$ in \eqref{g modified}, for $(t,z)\in \mathcal R_\epsilon$, one has
\begin{align*}
\p_zG_\v(t,z)&=\frac12\int_0^t\p_z g(\tau,z)d\tau-(\g-1)g(0,z)^{-\g-1}\p_zg(0,z)\S(z)+\frac{\g-1}{\g}g(0,z)^{-\g}\S'(z)\\
&=: \text{I}+\text{II}+\text{III}.
\end{align*}
Using \eqref{dzg dtg} yields
\begin{align*}
\text{I}&=-\int_0^t\p_t g(\tau,z)\frac{\B'(z)-g(\tau,z)^{1-\g}\S'(z)}{g(\tau,z)^2}d\tau\\
&=\B'(z)\int_0^t\p_t\r(\tau,z)d\tau-\frac1\g\S'(z)\int_0^t\p_t(\r(\tau,z)^\g)d\tau\\
&=\B'(z)\left(\r(t,z)-\r(0,z)\right)-\frac1\g\S'(z)\left(\r(t,z)^\g-\r(0,z)^\g\right).
\end{align*}
Since taking $t=0$ in \eqref{bdz} gives
\begin{equation}\label{dzrho_0z}
\p_z\r(0,z)=\frac{\B'(z)-\r(0,z)^{\g-1}\S'(z)}{(\g-1)\r(0,z)^{\g-2}\S(z)},
\end{equation}
one has
\begin{equation}\label{II}\begin{split}
\text{II}=(\g-1)\r(0,z)^{\g-1}\p_z\r(0,z)\S(z)
=\r(0,z)(\B'(z)-\r(0,z)^{\g-1}\S'(z)).
\end{split}\end{equation}
Moreover,
$$\text{III}=\frac{\g-1}{\g}\r(0,z)^\g\S'(z).$$
Combining the above equalities together yields
\begin{equation}\label{dzG}
	\p_zG_\v(t,z)=\frac{\B'(z)}{g(t,z)}-\frac{\S'(z)}{\g g(t,z)^\g},  \quad (t,z)\in \mathcal R_{\v}.
\end{equation}
This finishes the proof for the lemma.
\end{proof}

The following lemma implies that minimizers for \eqref{variation problem} satisfy the free boundary condition in a weak sense.
The proof is the same as that in \cite[Lemma 3.2]{LSTX2023}, so we omit it here.

\begin{lemma}\label{free BC}
Let $\psi$ be a minimizer for \eqref{variation problem}. Assume that $\O_{\mu,R}\cap\{\psi<Q\}$ is open. Then
$$\Phi_\v\left(|\n\psi|^2,\psi\right)=\ld_\v^2 \quad\text{on } \G_{\psi}:=\O_{\mu,R}\cap\p\{\psi<Q\}$$
in the sense that
$$\lim_{s\to0+}\int_{\p\{\psi<Q-s\}}\left(\Phi_\v\left(|\n\psi|^2,\psi\right)-\ld_\v^2\right)(\eta\c\nu)d\mathcal{H}^1=0\q\text{for any }
\eta\in C_0^\infty(\O_{\mu,R};\R^2),$$
where $\mathcal{H}^1$ is the one-dimensional Hausdorff measure.
\end{lemma}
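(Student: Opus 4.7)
The plan is to exploit the inner (domain) variation of the minimizer. For $\eta\in C_0^\infty(\Omega_{\mu,R};\R^2)$ and small $|\tau|$, the map $\Phi_\tau(x)=x+\tau\eta(x)$ is a diffeomorphism of $\Omega_{\mu,R}$ equal to the identity on $\p\Omega_{\mu,R}$, so $\psi_\tau:=\psi\circ\Phi_\tau^{-1}\in\K_{\psi^\sharp_{\mu,R}}$. The change of variables $y=\Phi_\tau(x)$ yields
\[
J^\v_{\mu,R,\Ld}(\psi_\tau)=\int_{\Omega_{\mu,R}}\bigl[G_\v(|(D\Phi_\tau)^{-T}\n\psi|^2,\psi)+\ld_\v^2\chi_{\{\psi<Q\}}\bigr]\det D\Phi_\tau\,dx,
\]
and differentiating at $\tau=0$ (using that $\psi$ is a minimizer) produces the Noether-type identity
\begin{equation}\label{noether_plan}
\int_{\Omega_{\mu,R}}\Bigl[-2\p_tG_\v(|\n\psi|^2,\psi)\,\p_i\eta_j\,\p_i\psi\,\p_j\psi+(G_\v(|\n\psi|^2,\psi)+\ld_\v^2\chi_{\{\psi<Q\}})\n\c\eta\Bigr]dx=0.
\end{equation}

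Next, the free-boundary contribution is isolated by restricting to $U_s:=\{\psi<Q-s\}\cap\Omega_{\mu,R}$ for $s>0$. On $U_s$ one has $\chi_{\{\psi<Q\}}\equiv 1$ and Lemma~\ref{EL} yields $\n\c(2\p_tG_\v\n\psi)=\p_zG_\v$. A direct calculation using this equation gives the pointwise identity $2\p_i[\p_tG_\v\,\p_i\psi\,\p_j\psi]=\p_jG_\v$ throughout $U_s$, so integrating \eqref{noether_plan} by parts over $U_s$ makes the two bulk integrals cancel exactly and leaves only the boundary term on $\p U_s\cap\{\psi=Q-s\}$ (the part on $\p\Omega_{\mu,R}$ does not contribute since $\eta$ is compactly supported). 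On the level curve $\{\psi=Q-s\}$, the outward unit normal to $U_s$ is $\nu=\n\psi/|\n\psi|$, and so $\p_j\psi=|\n\psi|\nu_j$; the boundary integrand then collapses to $-(\Phi_\v(|\n\psi|^2,\psi)-\ld_\v^2)\nu\c\eta$. This produces
\[
\int_{\p\{\psi<Q-s\}}\bigl(\Phi_\v(|\n\psi|^2,\psi)-\ld_\v^2\bigr)\,\eta\c\nu\,d\H^1=\mathcal{E}(s),
\]
where $\mathcal{E}(s)$ collects the contribution to \eqref{noether_plan} from $V_s:=\Omega_{\mu,R}\setminus U_s$.

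The main obstacle is to show $\mathcal{E}(s)\to 0$ as $s\to 0+$. Using the standard a priori bound $0\leq\psi\leq Q$ (established via truncation against $J^\v_{\mu,R,\Ld}$), one has $V_s\downarrow\{\psi=Q\}\cap\Omega_{\mu,R}$. The first term of $\mathcal{E}(s)$ is bounded by $C\int_{V_s}|\n\psi|^2\,dx$ (using the upper bound for $\p_tG_\v$ from \eqref{g dtgmt}), which tends to zero since $\n\psi=0$ a.e.\ on $\{\psi=Q\}$. For the second term, the key observation that $G_\v(0,Q)=0$ (immediate from the definition \eqref{G}) combined with the $C^{0,1}$ dependence of $G_\v$ on $z$ yields $|G_\v(|\n\psi|^2,\psi)|\leq C(|\n\psi|^2+|\psi-Q|)\leq C(|\n\psi|^2+s)$ on $V_s$, so this term is $O\bigl(\int_{V_s}|\n\psi|^2\,dx+s|V_s|\bigr)\to 0$. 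The remaining piece $\ld_\v^2\int_{\{Q-s\leq\psi<Q\}}\n\c\eta\,dx$ vanishes because $|\{Q-s\leq\psi<Q\}|\to 0$ by dominated convergence. A minor technical point is that the level set $\{\psi=Q-s\}$ may fail to be $C^1$ for exceptional $s$, but by Sard's theorem (applied to a smooth mollification of $\psi$) regularity holds for a.e.\ $s$, which suffices for taking the limit $s\to 0+$.
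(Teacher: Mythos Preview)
Your argument is correct and is exactly the standard domain-variation (inner-variation) computation that the paper defers to \cite[Lemma~3.2]{LSTX2023}; the paper itself omits the proof. Two cosmetic remarks: the bound you need on $\p_tG_\v=\tfrac12 g_\v$ is \eqref{gm} rather than \eqref{g dtgmt}, and since $\psi$ is already $C^{2,\alpha}$ in $\{\psi<Q\}$ by Lemma~\ref{EL} and interior Schauder estimates, Sard's theorem applies to $\psi$ directly without mollification.
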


\begin{remark}
In view of Lemma \ref{free BC}, if a minimizer $\psi$ is smooth near the free boundary $\Gamma_\psi$ and $\Gamma_\psi$ is also smooth, then by the monotonicity of $t\mapsto \Phi_\v(t,z)$ for each $z$ (cf. \eqref{Phi_dt}) and the definition of $\ld_\v$ in \eqref{J} one has $|\n\psi|=\Ld$ on $\G_\psi$.
Moreover, since
	$$\lambda_\epsilon^2=\Phi_\epsilon(\Lambda^2, Q) =\Phi_\epsilon(0, Q)+\int_0^{\Lambda^2}\p_t\Phi_\epsilon(s,Q)ds,$$
it follows from \eqref{Phi_dt} and $\Phi_\epsilon(0,Q)=0$ that
	\begin{align*}
		\frac12c_*(\bar\rho^*)^{-1}\Lambda^2\leq \lambda_\epsilon^2\leq \frac12 (c_*\v\bar\rho_*)^{-1}\Lambda^2,
	\end{align*}
where $c_*=c_*(\g)>0$ is defined in Lemma \ref{g properties},  $\bar\rho_*$ and $\bar\rho^*$ are the lower and upper bounds of $\bar\rho$ defined in \eqref{thm_condition1} and \eqref{rhou_bar_upper}, respectively.
\end{remark}

At the end of the section, we summarize the properties of the function $G_\v$. For this we denote
\begin{equation}\label{label_3}
	\MG(\mp,z):=G_\epsilon(|\mp|^2,z).
\end{equation}

\begin{proposition}\label{Gproperties}
	Let $G_\epsilon$ be defined in \eqref{G} and $\MG$ be defined in \eqref{label_3}. Then $\MG(\mp,z)$ is smooth in $\mp$ and $C^{1,1}$ in $z$. Furthermore, suppose that the density $\bar\rho$ and pressure $\bar P$ of the flow at the upstream satisfy \eqref{thm_condition2} and \eqref{Pbar_*}, where the constant $\bar\k=\bar\k(\bar\rho_*, \bar u,\g,\bar H)$ in \eqref{thm_condition2} is sufficiently small.
	Then the following properties hold:
	\begin{enumerate}
		\item[(i)] {There exist  positive constants  $\mathfrak b_\ast=c_*(\bar\rho^*)^{-1}$ and $\mathfrak b^\ast=(c_*\bar\rho_*)^{-1}$, where  $c_\ast>0$ is a constant depending only on $\gamma$, such that}
		\begin{align}
			\mathfrak b_*|\mp|^2 &\leq p_i \partial_{p_i} \MG (\mp,z) \leq \mathfrak b^*|\mp|^2,\label{eq:convex}\\
			\mathfrak b_* |\mathbf\xi |^2 &\leq \xi_i\partial_{p_ip_j} \MG(\mp,z)\xi_j \leq \mathfrak b^* \epsilon^{-1} |\mathbf\xi|^2 \quad\text{for all }\mathbf\xi\in \R^2.\label{eq:convex0}
		\end{align}
		\item[(ii)] One has
		\begin{equation}\label{supportG}
			\begin{split}
				&\p_z\mcG(\bp,z)= 0 \quad\text{in } \{(\bp, z): \bp\in\R^2, z\in (-\infty,0]\},\\
				&\p_z\mcG(\bp,z)\geq 0 \quad \text{in } \{(\bp, z): \bp\in\R^2, z\in [Q,\infty)\}.
			\end{split}
		\end{equation}
		\item[(iii)] There exist constants
		\begin{align}\label{delta}
			&\delta':=
			C\v^{-1}\k_0 \q \text{and}\q \delta:=\frac{(1+\k_0)\delta'}{\bar\rho_*},
		\end{align}
		where $\kappa_0$ and $\k_1$ are defined in \eqref{k0} and $C=C(\g,\bar u_*)>0$, such that
		\begin{align}
			&\epsilon^{-1}|\p_z \MG(\mp,z)|+|\mp\cdot \p_{\mp z} \MG(\mp,z)|\leq \delta',\label{eq:upper_pzzG1}\\
			&|\p_{\mp z}\MG(\mp,z)|+|\p_{zz}\MG(\mp,z)|\leq \delta, \label{eq:upper_pzzG2}\\
			&\MG(\mathbf 0,Q)=0,\quad \MG(\mp,z)\geq {\frac{\mathfrak b_*}2}|\mp|^2-
			\v\d'{\min\{Q, (Q-z)_+\}}.\label{eq:com_energy}
		\end{align}
	\end{enumerate}
\end{proposition}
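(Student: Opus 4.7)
The plan is to derive all three parts by direct differentiation of $\mcG(\mp,z)=G_\v(|\mp|^2,z)$ and feeding the results into the bounds on $g_\v$, $\p_tg_\v$, $\p_zg_\v$ from Lemma~\ref{g properties}, combined with the derivative bounds \eqref{k0_BS} on $\B,\S$ and \eqref{tc_deri1}--\eqref{tc_deri2} on $\t_c$. The regularity and part~(i) come out immediately: one computes $\p_{p_i}\mcG=p_ig_\v(|\mp|^2,z)$, so that $p_i\p_{p_i}\mcG=|\mp|^2g_\v$ and \eqref{eq:convex} will follow from \eqref{gm}; while $\p_{p_ip_j}\mcG=\d_{ij}g_\v+2p_ip_j\p_tg_\v$, from which the lower bound in \eqref{eq:convex0} uses $\p_tg_\v\geq 0$ and the upper bound uses $(\mp\c\xi)^2\leq|\mp|^2|\xi|^2$ together with the elliptic bound \eqref{g dtgmt}.

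For part (ii), I will first observe that on $(-\infty,0]$ the extended functions satisfy $\B'=\S'=0$ (cf.\ \eqref{eq:sign_B}); plugging this into \eqref{dzg dtg} forces $\p_zg\equiv 0$ there, and \eqref{tc_deri_expression} forces $\t_c'\equiv 0$, so \eqref{gm_dz} yields $\p_zg_\v\equiv 0$ as well. Since moreover $\p_zg(0,z)=0$ by \eqref{dzrho_0z} and $\S'(z)=0$, the boundary contribution in \eqref{G} also has zero $z$-derivative, hence $\p_z\mcG\equiv 0$ on $\{z\leq 0\}$. For $z\geq Q$ one has $\S'=0$ and $\B'\geq 0$, and in the strictly subsonic range $|\mp|^2\leq(1-\v)\t_c(z)$ the identity \eqref{dzG} reduces to $\p_z\mcG=\B'(z)/g\geq 0$. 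For $|\mp|^2>(1-\v)\t_c(z)$ I will decompose
\begin{equation*}
G_\v(t,z)=G_\v(\bar t(z),z)+\tfrac12\int_{\bar t(z)}^tg_\v(\tau,z)\,d\tau,\qquad \bar t(z):=(1-\v)\t_c(z),
\end{equation*}
and differentiate in $z$; the contributions coming from $\bar t'(z)$ cancel because $\p_tG_\v=g_\v/2$ and $g_\v(\bar t(z),z)=g(\bar t(z),z)$, leaving $\p_z\mcG|_{t=\bar t}=\B'/g$ plus an integral that has length at most $\v\t_c(z)/2$ (since $\p_zg_\v$ is supported in $\{t\leq(1-\v/2)\t_c(z)\}$ by \eqref{label_1}) and whose size is dominated under \eqref{thm_condition2} by the leading positive term.

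For part~(iii), the identity $\p_{p_iz}\mcG=p_i\p_zg_\v$ makes $|\mp\c\p_{\mp z}\mcG|=|\mp|^2|\p_zg_\v|$ bounded directly by \eqref{dzgm2}, and after using \eqref{thm_condition2} to absorb $\bar P\k_1/(\bar\rho_*)^2$ into $\k_0$ one recovers $\d'$ of \eqref{delta}. The bound on $\v^{-1}|\p_z\mcG|$ follows from \eqref{dzG} in the subsonic zone and from the same splitting as above beyond. For $|\p_{zz}\mcG|=|\p_{zz}G_\v|$ I will differentiate \eqref{gm_dz} once more in $z$, substitute \eqref{dzg dtg}, and invoke \eqref{tc_deri2}, \eqref{k0_BS}, and \eqref{thm_condition2} to reach $\d$ as in \eqref{delta}. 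Finally $\mcG(\mathbf 0,Q)=G_\v(0,Q)=0$ by cancellation in \eqref{G}, and writing $\mcG(\mp,z)=\tfrac12\int_0^{|\mp|^2}g_\v(\tau,z)\,d\tau+G_\v(0,z)$ reduces the coercivity statement in \eqref{eq:com_energy} to $G_\v(0,z)\geq -\v\d'\min\{Q,(Q-z)_+\}$, which holds since $G_\v(0,z)\geq G_\v(0,Q)=0$ for $z\geq Q$ by part~(ii), and since $|\p_zG_\v(0,z)|\leq \v\d'$ by the bound just established (with $G_\v(0,z)=G_\v(0,0)$ on $(-\infty,0]$ by part (ii)).

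The hardest part will be the bookkeeping in part~(ii) for $z\geq Q$ beyond the subsonic zone, where one must verify that the correction integral produced by the subsonic truncation remains dominated by $\B'(z)/g$; closely related is the task in part~(iii) of repeatedly absorbing the $\bar P\k_1$ contributions coming from \eqref{k0_BS}, \eqref{tc_deri1}, \eqref{tc_deri2} into the simple expressions of $\d'$ and $\d$ in \eqref{delta}. This is precisely the point where the smallness of $\bar\k$ in \eqref{thm_condition2} is decisive.
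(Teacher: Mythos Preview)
Your plan is largely correct and, for parts (i) and (iii), tracks the paper's proof almost verbatim (the identities $p_i\p_{p_i}\mcG=|\mp|^2g_\v$, $\p_{p_ip_j}\mcG=g_\v\d_{ij}+2p_ip_j\p_tg_\v$, $\p_{p_iz}\mcG=p_i\p_zg_\v$ are exactly what the paper writes, and the coercivity \eqref{eq:com_energy} is handled identically via $\mcG(\mathbf0,Q)=0$, strong convexity in $\mp$, the bound $|\p_z\mcG|\le\v\d'$, and the sign information from part~(ii)).

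There is, however, a genuine gap in your treatment of part~(ii) for $z\ge Q$ beyond the subsonic zone. You propose to dominate the correction integral $\tfrac12\int_{\bar t}^t\p_zg_\v\,d\tau$ by the leading term $\B'(z)/g(\bar t,z)$ using ``the length is at most $\v\t_c/2$'' together with a uniform bound on $|\p_zg_\v|$. This fails: the available uniform bounds \eqref{dzgm1}--\eqref{dzgm2} involve $\k_0$ and do not vanish with $\B'(z)$, whereas the leading term $\B'(z)/g$ can be arbitrarily small (even zero) at points $z>Q$. The missing observation is structural, not perturbative: since $\S'(z)=0$ on $[Q,\infty)$, both $\p_zg$ (via \eqref{dzg dtg}) and $\t_c'$ (via \eqref{tc_deri_expression}) carry an explicit factor of $\B'(z)$, hence so does $\p_zg_\v$ by \eqref{gm_dz}. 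Only after factoring out $\B'(z)$ can you compare coefficients, and the resulting inequality is precisely \eqref{label_8}, equivalent to the ratio condition $\bar\rho^*/\bar\rho_*\le(2^\g/(\g+1))^{1/(\g-1)}$ of \eqref{rho*_*}, which is where the smallness of $\bar\k$ in \eqref{thm_condition2} actually enters for part~(ii). (By contrast, in part~(iii) the role of \eqref{thm_condition2} is genuinely to absorb $\bar P\k_1/(\bar\rho_*)^2$ into $\k_0$ as you say; these are two distinct mechanisms that your final paragraph conflates.)

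The paper sidesteps your splitting entirely by first integrating by parts in $\tfrac12\int_0^{|\mp|^2}\p_zg_\v\,d\tau$ using \eqref{gm_dz}, arriving at the single closed formula \eqref{dzGm expression} valid for all $|\mp|^2$. In that formula both summands are manifestly nonnegative multiples of $\B'(z)$ once \eqref{label_8} holds, so the sign conclusion is immediate. Your splitting approach can be salvaged---after factoring $\B'(z)$ out of the integrand and bounding $\int_{\bar t}^t\p_tg_\v=g_\v(t,z)-g(\bar t,z)\le g^*-g(\bar t,z)$ via \eqref{eq:dzgm_dtgm}---but it leads to the same coefficient inequality \eqref{label_8}, so there is no real gain over the paper's route.
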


\begin{proof}
	\emph{(i).} In view of the definition of $G_\v$ in \eqref{G}, straightforward computations yield
	\begin{align*}
		&p_i\p_{p_i}\MG(\mp,z)=g_\v(|\mp|^2,z)|\mp|^2,\\
		&\p_{p_ip_j}\MG(\mp,z)=g_\v(|\mp|^2,z)\d_{ij}+2\p_tg_\v(|\mp|^2,z)p_ip_j.
	\end{align*}
	Thus from \eqref{gm} and \eqref{g dtgmt} one immediately gets  \eqref{eq:convex} and \eqref{eq:convex0}.
	
	\emph{(ii).} From the definition of $\MG$ and the equality \eqref{gm=g} one has
	\begin{equation}\label{label_9}\begin{split}
			\p_z\MG(\mp,z)&=\frac12\int_0^{|\mp|^2}\p_zg_\v(\tau,z)d\tau-(\g-1)g(0,z)^{-\g-1}\p_zg(0,z)\S(z)+\frac{\g-1}{\g}g(0,z)^{-\g}\S'(z)\\
			&\stackrel{\eqref{II}}{=}\frac12\int_0^{|\mp|^2}\p_zg_\v(\tau,z)d\tau+\frac{\B'(z)}{g(0,z)}-\frac{\S'(z)}{\g g(0,z)^\g}.
	\end{split}\end{equation}
	This, together with the explicit expression for $\p_zg_\epsilon$ in \eqref{gm_dz} and an integration by parts yields that
	\begin{equation}\label{dzGm expression}
		\begin{split}
			\p_z \mcG(\bp,z)=&-\int_0^{|\bp|^2}\left(
			\frac{\mathcal{B}'(z)}{g(\tau,z)}-\frac{\mathcal{S}'(z)}{\g g(\tau,z)^\g}-\frac{1}2
			(g^\ast - g(\tau,z)) \frac{\tau \t_c'(z)}{\t_c(z)} \right)\frac{\varpi'_\epsilon(\tau/\t_c(z))}{\t_c(z)} \ d\tau\\
			&+\left(\frac{\mathcal{B}'(z)}{g(|\bp|^2, z)}-\frac{\mathcal{S}'(z)}{\g g(|\bp|^2, z)^\g}\right)\varpi_\epsilon(|\bp|^2/\t_c(z)).
		\end{split}
	\end{equation}
	Note that $\mathcal B'(z)=\mathcal S'(z)=0$ on $(-\infty, 0]$ and $\B'(z)\geq0$, $\S'(z)=0$ on $[Q,\infty)$, cf. \eqref{eq:sign_B}. If we can show that
		\begin{align}\label{label_8}
		g(\tau,z)(g^\ast - g(\tau,z)) \left(\frac{2\B(z)}{(\g+1)\S(z)}\right)^{\frac2{\g-1}}\leq 1
	\end{align}
	in the set $\mathcal R:=\{(\tau,z):\tau\in [0,\t_c(z)),\, z\in(-\infty,0]\cup[Q,\infty)\}$, then it follows from the expression of $\t_c'$ in \eqref{tc_deri_expression},  $\varpi'_\v\leq 0$, $\varpi_\v\geq 0$ and $g\geq 0$ that  $\p_z \mcG(\bp, z)$ is a positive multiple of $\mathcal{B}'(z)$. In view of \eqref{eq:sign_B}, we consequently conclude that \eqref{supportG} holds true.
    Now it remains to prove \eqref{label_8}. Using \eqref{g bound} and \eqref{rho_bound} yields
	\begin{align}\label{label_12}
		0\leq 4g(\tau,z)(g^\ast-g(\tau,z))\leq (g^\ast)^2 = \left(\frac{2}{\g+1}\right)^{-\frac2{\g-1}}(\bar\rho_*)^{-2}.
	\end{align}
	By \eqref{label_12} and the upper bound of $\B/\S$ in \eqref{B/S}, one can check that the inequality \eqref{label_8} is equivalent to
	\begin{align}\label{rho*_*}
		\frac{\bar\rho^*}{\bar\rho_*}\leq\left(\frac{2^\g}{\g+1}\right)^{\frac1{\g-1}}.
	\end{align}
	Since $\bar P>\bar P_*>\bar\rho_*(\bar u_*)^2/\g$ (cf. \eqref{Pbar_*}), the inequality \eqref{rho*_*} is guaranteed by the condition \eqref{thm_condition2} provided $\bar\k=\bar\k(\bar\rho_*, \bar u,\g,\bar H)$ is sufficiently small.
	This finishes the proof of (ii).
	
	\emph{(iii).} In order to prove \eqref{eq:upper_pzzG1}-\eqref{eq:com_energy}, we let the constant $\bar\k$ in \eqref{thm_condition2} be sufficiently small such that $\bar\k\leq \k_0(\bar\rho_*)^2$. Consequently,
	\begin{equation}\label{label_4}
	\frac{\bar P\k_1}{(\bar\rho_*)^2}\leq \k_0. 
	\end{equation}
	
	\emph{Proof for \eqref{eq:upper_pzzG1}:} It follows from the estimate for $\t_c'$ in \eqref{tc_deri1}, the bounds of $g$ in \eqref{g bound}, the estimate \eqref{rho_bound}, and the $L^\infty$-bounds of $\B'$ and $\S'$ in \eqref{k0_BS} that
	\begin{align}\label{label_11}
		|g^\ast\t_c'(z)|+\left|\frac{\mathcal{B}'(z)}{g(\tau, z)}\right|+\left|\frac{\mathcal{S}'(z)}{\g g(\tau, z)^\g}\right|\leq C C^*,
	\end{align}
	where $C=C(\g,\bar u_*)>0$ and
	\begin{align*}
	C^*:=\frac{\bar\rho^*}{\bar\rho_*}\left(1+\frac{\bar\rho^*}{\bar\rho_*}\right)\left(\k_0+\frac{\bar P\k_1}{(\bar\rho_*)^2}\left(1+\left(\frac{\bar\rho^*}{\bar\rho_*}\right)^{\g-1}\right)\right)\stackrel{\eqref{rho*_*}-\eqref{label_4}}{\leq} C(\gamma)\k_0.
	\end{align*}
	Thus by the expression of $\p_z\MG$ in \eqref{dzGm expression} and the support condition of $\varpi_\epsilon'$,
	one has
	\begin{align*}
		|\p_z\MG(\mp,z)|\leq CC^*
	\end{align*}
	for some $C=C(\g,\bar u_*)>0$.
	Besides, a direct differentiation of $\p_z\MG$ yields
	\begin{align}\label{label_7}
		\p_{p_iz}\MG(\mp,z) = p_i \p_zg_\epsilon (|\mp|^2,z).
	\end{align}
	Thus by \eqref{dzgm2} and \eqref{rho*_*}-\eqref{label_4} one has
	\begin{align*}
		|\mp\cdot \p_{\mp z}\MG(\mp,z)|=|\mp|^2|\p_zg_\epsilon(|\mp|^2,z)|\leq 
		C\v^{-1}\k_0
	\end{align*}
	with $C=C(\g,\bar u_*)$.
	This finishes the proof of \eqref{eq:upper_pzzG1}.
	
	\emph{Proof for \eqref{eq:upper_pzzG2}:} Using \eqref{label_7}, the estimates of $\p_zg_\v$ in \eqref{dzgm1} and \eqref{label_1}, the upper bound of $\t_c$ in \eqref{tc_bound}, and \eqref{rho*_*}-\eqref{label_4}, one gets
	\begin{align}\label{label_10}
		|\p_{\mp z}\MG(\mp,z)|=|\mp||\p_zg_\epsilon(|\mp|^2,z)|
		&\leq C\v^{-1}(\bar P\bar\rho_*)^{-\frac12}\k_0
	\end{align}
	with $C=C(\g,\bar u_*)$.
	
	For the estimate of $\p_{zz}\MG$, note that by the expression of $\p_z\MG$ in \eqref{label_9} and the expression of $\p_z\rho(0,z)$ in \eqref{dzrho_0z} it holds
	\begin{equation}\label{dzzG}\begin{split}
			\p_{zz}\MG(\mp,z)=&\frac12\int_0^{|\mp|^2}\p_{zz}g_\v(\tau,z)d\tau+\frac{\B''(z)}{g(0,z)}-\frac{\S''(z)}{\g g(0,z)^\g}\\
			&+\left(\B'(z)-\frac{\S'(z)}{g(0,z)^{\g-1}}\right)^2\frac{g(0,z)^{\g-2}}{(\g-1)\S(z)}.
	\end{split}\end{equation}
	In view of the estimates about derivatives of $\B$ and $\S$ in \eqref{k0_BS}, the bounds of $g$ in \eqref{g bound}, the lower bound of $\S$ in \eqref{BS_bound}, and \eqref{rho*_*}-\eqref{label_4}, the last three terms on the right-hand side of \eqref{dzzG} are bounded by
	\begin{align*}
		\frac{C\k_0}{\bar\rho_*}\left(1+\k_0+\frac{\k_1}{\bar\rho_*}\right)+\frac{C\k_0^2}{\bar P},
	\end{align*}
	where $C=C(\gamma,\bar u_*)$. To estimate the first term on the right-hand side of \eqref{dzzG}, noting that from \eqref{gm_dz} and that  $\p_z g (t,z)=  2(\B'(z)-g(t,z)^{1-\g}\S'(z))\p_t(\frac{1}{g(t,z)})$, cf. \eqref{dzg dtg}, one has
	\begin{align*}
		\p_{zz}g_\v(\tau,z) =&2\left(\mathcal{B}''(z)\p_t\left(\frac{1}{g(\tau,z)}\right) + \mathcal{B}'(z) \p_{tz} \left(\frac{1}{g(\tau,z)}\right) \right) \varpi_\v\left(\tau/\t_c(z)\right) \\
		&-\frac2{\gamma}\left(\mathcal{S}''(z)\p_t\left(\frac{1}{g(\tau,z)^\gamma}\right) + \mathcal{S}'(z) \p_{tz} \left(\frac{1}{g(\tau,z)^\gamma}\right) \right) \varpi_\v\left(\tau/\t_c(z)\right) \\
		&+  2\p_z g(\tau,z)\frac{d}{dz}\varpi_\v\left(\tau/\t_c(z)\right) +(g(\tau,z)-g^\ast)\frac{d^2}{dz^2}\varpi_\v\left(\tau/\t_c(z)\right).
	\end{align*}
	We plug it into the expression of $\p_{zz}\MG$. For the term involving $\p_{tz}(\frac{1}{g(\tau,z)})$, an integration by parts yields
	\begin{align*}
		&\int_0^{|\mp|^2} \p_{tz}\left(\frac{1}{g(\tau,z)}\right)\varpi_\epsilon\left(\tau/\t_c(z)\right) \ d\tau\\ =& -\frac{\p_zg}{g^2}(|\mp|^2, z)\varpi_\epsilon \left(|\mp|^2/\t_c(z)\right)+\frac{\p_zg}{g^2}(0,z)
		+\int_0^{|\mp|^2}\frac{\p_zg}{g^2}(\tau,z) \frac{\varpi'_\epsilon(\tau/\t_c(z))}{\t_c(z)}\ d\tau.
	\end{align*}
	The term involving $\p_{tz}(\frac1{g^{\gamma}(\tau,z)})$ can be  estimated similarly as above. Note that
	$$|\varpi''_\epsilon(\tau/\t_c(z))|\leq 8\epsilon^{-2}\quad \text{and}\quad |{\rm supp}(\varpi''_\epsilon(\tau/\t_c(z)))|\leq \v.$$
	Thus we infer from the estimates \eqref{k0_BS}, the bounds for $g$ in \eqref{g bound}, the upper bound for $\p_tg$ in \eqref{eq:g_dt_bound}, the relation between $\p_zg$ and $\p_tg$ in \eqref{dzg dtg}, the estimates of $\t_c'$ and $\t_c''$ in \eqref{tc_deri1}-\eqref{tc_deri2} and \eqref{rho*_*}-\eqref{label_4} that
	\begin{equation}\label{label_5}\begin{split}
	|\p_{zz}\MG|\leq&\frac{C\v^{-1}\k_0}{\bar\rho_*}\left(1+\k_0+\frac{\k_1}{\bar\rho_*}\right)+\frac{C\v^{-1}\k_0^2}{\bar P},
		\end{split}
	\end{equation}
	where $C=C(\gamma,\bar u_*)$. Note that $\bar P$ satisfies \eqref{Pbar_*} and $\k_1<\bar\k/\bar P$
	by \eqref{thm_condition2} for $\bar\k\leq \k_0(\bar\rho_*)^2$.
	The inequality \eqref{label_5} together with \eqref{label_10} gives the estimate \eqref{eq:upper_pzzG2}.
	
	\emph{Proof for \eqref{eq:com_energy}:} In view of the definition of $\MG$ in \eqref{label_3} and the definition of $G_\epsilon$ in \eqref{G}, one gets $\MG(\mathbf0,Q)=0$. The second inequality in \eqref{eq:com_energy} follows directly from the strong convexity in $\mp$ and the estimates of derivatives in $z$ in \eqref{eq:upper_pzzG1}. More precisely, the strong convexity property \eqref{eq:convex0} gives that
	\begin{align*}
		\MG(\mp,z)\geq \MG(\mathbf 0,z)+ \frac{\mathfrak b_\ast}{2}|\mp|^2,
	\end{align*}
	where we have also used that $\nabla_{\mathbf p} \MG(\mathbf 0,z)=0$ (since  $\nabla_{\mathbf p} \MG(\mp,z)=2{\mathbf p}\p_tg_\epsilon(|\mp|^2,z)$). To estimate $\MG(\mathbf0,z)$ we use a first order Taylor expansion at $z=Q$.
	{Then for $z\in [0,Q]$, one has
		\begin{align*}
			|\MG(\mathbf0,z)|=|\MG(\mathbf0,Q)+ \p_z\MG(0,\xi_z)(z-Q)|\leq  \v\d' (Q-z),
		\end{align*}
		where $|\p_z\MG(\mp, z)|\leq \v\d'$ from \eqref{eq:upper_pzzG1} is used in the last inequality. Moreover, we infer from 
		\eqref{supportG} that
		$\mathcal G(\mathbf 0,z)=\mathcal G(\mathbf 0,0)\geq - \v\d'Q$ 
		for $z<0$, and $\mathcal G(\mathbf 0,z)\geq \mathcal G(\mathbf 0,Q)=0$ for $z>Q$.}
	Combining the above estimates together we obtain the second inequality in \eqref{eq:com_energy}.
	This completes the proof of the proposition.
\end{proof}

\section{Existence, regularity and fine properties of solutions to the truncated problem}\label{sec:truncated problem}

In the previous sections, we reformulate the truncated free boundary problem into a variational problem \eqref{variation problem}, which has exactly the same formulation as in \cite{LSTX2023}. With Proposition \ref{Gproperties} at hand, one can argue along the same lines as in \cite[Sections 4-6]{LSTX2023} to obtain the existence, regularity and fine properties of a minimizer and its free boundary for the variational problem \eqref{variation problem}. For completeness we briefly states the conclusions here.

\subsection{Existence and regularity of solutions to the truncated problem}
The existence of minimizers for the variational problem \eqref{variation problem} follows from standard theory for calculus of variations.
\begin{lemma}(\cite[Lemma 4.2]{LSTX2023})\label{lem:minimizer_existence}
	Assume $\MG$ defined in \eqref{label_3} satisfies \eqref{eq:convex0} and \eqref{eq:com_energy}. Then the problem \eqref{variation problem} has a minimizer.
\end{lemma}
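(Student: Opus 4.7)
The plan is to apply the direct method of the calculus of variations. First, I observe that the admissible class $\K_{\psi^{\sharp}_{\mu,R}}$ is non-empty (it contains $\psi^{\sharp}_{\mu,R}$ itself) and that $J^\v_{\mu,R,\Ld}(\psi^{\sharp}_{\mu,R})<\infty$ by the regularity of $\psi^{\sharp}_{\mu,R}$ and the smoothness/boundedness of $G_\v$ in its arguments. The coercivity bound \eqref{eq:com_energy} in Proposition \ref{Gproperties} gives
\[
J^\v_{\mu,R,\Ld}(\varphi)\geq \frac{\mathfrak b_*}{2}\int_{\O_{\mu,R}}|\n\varphi|^2\,dx - \v\d' Q\,|\O_{\mu,R}|,
\]
so the functional is bounded below and any minimizing sequence $\{\varphi_k\}\subset \K_{\psi^{\sharp}_{\mu,R}}$ has $\|\n\varphi_k\|_{L^2(\O_{\mu,R})}$ uniformly bounded. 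Since $\varphi_k-\psi^{\sharp}_{\mu,R}\in H^1_0(\O_{\mu,R})$, Poincar\'e's inequality together with the fixed boundary data also controls $\|\varphi_k\|_{L^2(\O_{\mu,R})}$, giving a uniform $H^1$-bound on $\{\varphi_k\}$.

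Next I would pass to a (not relabelled) subsequence with $\varphi_k \rightharpoonup \varphi_\infty$ weakly in $H^1(\O_{\mu,R})$, strongly in $L^2(\O_{\mu,R})$, and pointwise a.e.\ in $\O_{\mu,R}$ (Rellich--Kondrachov). Continuity of the trace operator ensures $\varphi_\infty = \psi^{\sharp}_{\mu,R}$ on $\p\O_{\mu,R}$, so $\varphi_\infty\in \K_{\psi^{\sharp}_{\mu,R}}$. It remains to verify lower semicontinuity of each of the two pieces of $J^\v_{\mu,R,\Ld}$ along this subsequence.

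For the gradient part $\int_{\O_{\mu,R}}\MG(\n\varphi,\varphi)\,dx$, the convexity of $\mp\mapsto \MG(\mp,z)$ granted by \eqref{eq:convex0}, together with the $C^{1,1}$ dependence on $z$, places the integrand in the classical Tonelli--Serrin framework: since $\varphi_k\to\varphi_\infty$ in $L^2$ (hence pointwise a.e.\ along a subsequence) and $\n\varphi_k\rightharpoonup \n\varphi_\infty$ in $L^2$, one gets
\[
\int_{\O_{\mu,R}}\MG(\n\varphi_\infty,\varphi_\infty)\,dx \leq \liminf_{k\to\infty}\int_{\O_{\mu,R}}\MG(\n\varphi_k,\varphi_k)\,dx.
\]
For the free-boundary term, the point I expect to require the most care is the lower semicontinuity of $\varphi\mapsto \int\chi_{\{\varphi<Q\}}\,dx$; this is handled by Fatou's lemma once one observes that at any $x$ with $\varphi_k(x)\to\varphi_\infty(x)$ and $\varphi_\infty(x)<Q$ we have $\chi_{\{\varphi_k<Q\}}(x)=1$ for all large $k$, so
\[
\chi_{\{\varphi_\infty<Q\}}(x)\leq \liminf_{k\to\infty}\chi_{\{\varphi_k<Q\}}(x)\quad\text{a.e.\ in }\O_{\mu,R}.
\]
Combining these two lower semicontinuity inequalities gives $J^\v_{\mu,R,\Ld}(\varphi_\infty)\leq \liminf_{k\to\infty}J^\v_{\mu,R,\Ld}(\varphi_k)=\inf_{\K_{\psi^{\sharp}_{\mu,R}}}J^\v_{\mu,R,\Ld}$, so $\varphi_\infty$ realizes the infimum. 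The argument is structurally identical to the isentropic case treated in \cite[Lemma 4.2]{LSTX2023}, and Proposition \ref{Gproperties} is exactly the input needed to import it verbatim into the present non-isentropic setting.
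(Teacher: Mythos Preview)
Your proposal is correct and follows exactly the standard direct-method argument that the paper imports by citation from \cite[Lemma 4.2]{LSTX2023}; the paper itself provides no proof beyond that reference. One minor remark: your justification that $\varphi_\infty\in\K_{\psi^{\sharp}_{\mu,R}}$ is better phrased via the weak closedness of the affine subspace $\psi^{\sharp}_{\mu,R}+H^1_0(\O_{\mu,R})$ rather than ``continuity of the trace operator'' (which is not quite enough under weak $H^1$ convergence), but the conclusion is unaffected.
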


Throughout the rest of this section, we always assume that the conditions \eqref{thm_condition2} and \eqref{Pbar_*} hold, where the constant $\bar\k=\bar\k(\bar\rho_*, \bar u,\g,\bar H)$ in \eqref{thm_condition2} is sufficiently small. Then the function $\MG$ defined in \eqref{label_3} satisfies all properties in Proposition \ref{Gproperties}.
In view of \cite[Section 4.2]{LSTX2023}, a minimizer $\psi$ has the following properties.

\begin{lemma}
	Let $\psi$ be a minimizer for \eqref{variation problem}. Then the following statements hold:
	\begin{itemize}
		\item[(i)] $\psi$ is a supersolution of the elliptic equation
		\begin{equation}\label{basic_eq_psi}
			\partial_i(\p_{p_i} \MG(\nabla\psi, \psi))- \p_z \MG(\nabla\psi,\psi) = 0,
		\end{equation}
		in the sense of
		\begin{equation}\label{eq:supersol}
			\int_{\O_{\mu,R}}(\p_{p_i} \MG(\nabla\psi, \psi)\partial_i \zeta + \p_z \MG(\nabla\psi,\psi) \zeta)\geq 0,\quad\text{for all }\zeta\geq 0,\,\zeta\in C_0^\infty(\O_{\mu,R}).
		\end{equation}
		\item[(ii)]  $\psi$ satisfies $0 \leq \psi\leq Q$.	
		\item[(iii)] $\psi\in C^{0,\alpha}_{loc}(\O_{\mu,R})$ for any $\alpha\in (0,1)$.
	\end{itemize}
\end{lemma}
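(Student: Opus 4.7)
The plan is to prove parts (ii), (i), and (iii) in that order, imitating the variational arguments of \cite[Section 4.2]{LSTX2023} and relying crucially on Proposition \ref{Gproperties}.

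For (ii), I would use a two-sided truncation. Set $\tilde\psi:=\max\{0,\min\{\psi,Q\}\}$, which belongs to $\K_{\psi^\sharp_{\mu,R}}$ since $0\leq \psi^\sharp_{\mu,R}\leq Q$ on $\p\O_{\mu,R}$. Note that $\chi_{\{\tilde\psi<Q\}}=\chi_{\{\psi<Q\}}$ a.e., so the two functionals differ only through the $\mcG$-integrals on the set $\{\psi<0\}\cup\{\psi>Q\}$, where $\nabla\tilde\psi=0$ and $\tilde\psi\in\{0,Q\}$. On $\{\psi<0\}$, by $\p_z\mcG=0$ on $(-\infty,0]$ from \eqref{supportG} and the strong convexity \eqref{eq:convex0},
\begin{equation*}
\mcG(\nabla\psi,\psi)=\mcG(\nabla\psi,0)\geq \mcG(\mathbf 0,0)+\tfrac{\mathfrak b_*}2|\nabla\psi|^2=\mcG(\nabla\tilde\psi,\tilde\psi)+\tfrac{\mathfrak b_*}2|\nabla\psi|^2.
\end{equation*}
On $\{\psi>Q\}$, using $\p_z\mcG\geq 0$ on $[Q,\infty)$ from \eqref{supportG}, together with $\mcG(\mathbf 0,Q)=0$ and \eqref{eq:convex0},
\begin{equation*}
\mcG(\nabla\psi,\psi)\geq \mcG(\nabla\psi,Q)\geq \tfrac{\mathfrak b_*}2|\nabla\psi|^2=\mcG(\nabla\tilde\psi,\tilde\psi)+\tfrac{\mathfrak b_*}2|\nabla\psi|^2.
\end{equation*}
Minimality of $\psi$ forces $\nabla\psi=0$ a.e.\ on $\{\psi<0\}\cup\{\psi>Q\}$. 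The Sobolev chain rule applied to $(\psi-Q)_+$ and $(-\psi)_+$ then shows these functions are locally constant, and the boundary condition $0\leq\psi^\sharp_{\mu,R}\leq Q$ forces them to vanish identically, yielding $0\leq\psi\leq Q$.

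For (i), I would use the one-sided variation $\psi+t\zeta$ with $\zeta\in C_0^\infty(\O_{\mu,R})$, $\zeta\geq 0$, and $t>0$. Since $\psi+t\zeta\geq\psi$, we have $\{\psi+t\zeta<Q\}\subseteq\{\psi<Q\}$, so
\begin{equation*}
\int_{\O_{\mu,R}}\lambda_\v^2\bigl(\chi_{\{\psi+t\zeta<Q\}}-\chi_{\{\psi<Q\}}\bigr)dx\leq 0.
\end{equation*}
Combined with the minimality inequality $J^\v_{\mu,R,\Ld}(\psi+t\zeta)\geq J^\v_{\mu,R,\Ld}(\psi)$, this gives
\begin{equation*}
\int_{\O_{\mu,R}}\bigl(\mcG(\nabla\psi+t\nabla\zeta,\psi+t\zeta)-\mcG(\nabla\psi,\psi)\bigr)dx\geq 0.
\end{equation*}
Dividing by $t>0$ and sending $t\to 0^+$ (dominated convergence is justified by the uniform first-derivative bounds on $\mcG$ from \eqref{eq:convex}, \eqref{eq:convex0} and \eqref{eq:upper_pzzG1}) yields exactly \eqref{eq:supersol}.

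For (iii), the strong convexity \eqref{eq:convex0} together with the lower bound in \eqref{eq:com_energy} give $\mcG(\bp,z)\geq \tfrac{\mathfrak b_*}2|\bp|^2-\v\d'Q$, while from \eqref{eq:convex} one has $\mcG(\bp,z)\leq \tfrac{\mathfrak b^*}{\v}|\bp|^2+\mcG(\mathbf 0,z)$ with $|\mcG(\mathbf 0,z)|\leq C$. Comparing $\psi$ against any $\varphi\in H^1(B_r)$ with $\varphi=\psi$ on $\p B_r$ in $J^\v_{\mu,R,\Ld}$, the minimality yields the quasi-minimizer inequality
\begin{equation*}
\int_{B_r}|\nabla\psi|^2\,dx\leq C\int_{B_r}|\nabla\varphi|^2\,dx+Cr^2
\end{equation*}
for every ball $B_r\subset \O_{\mu,R}$, with $C$ depending on $\mathfrak b_*$, $\mathfrak b^*$, $\lambda_\v$ and $Q$. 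Classical Morrey-Campanato / De Giorgi-Nash-Moser theory for quasi-minimizers of the Dirichlet functional then gives $\psi\in C^{0,\alpha}_{loc}(\O_{\mu,R})$ for every $\alpha\in(0,1)$. The main technical point is in (ii): one must combine the sign structure of $\p_z\mcG$ from \eqref{supportG} with the strict convexity in $\bp$ from \eqref{eq:convex0} to ensure that truncation strictly decreases the energy unless $\psi$ is already in $[0,Q]$, and these properties were tailored precisely for this argument in Proposition \ref{Gproperties}.
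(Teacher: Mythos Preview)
Your arguments for (i) and (ii) are correct and match the standard variational approach that the paper invokes via \cite[Section 4.2]{LSTX2023}: the one-sided variation $\psi+t\zeta$ with $\zeta\geq0$ for the supersolution property, and the two-sided truncation combined with the sign structure \eqref{supportG} and strict convexity \eqref{eq:convex0} for the bounds $0\leq\psi\leq Q$. Since the paper gives no proof of its own here, your reconstruction is exactly what is intended.

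For (iii) there is a small but genuine gap. The inequality you derive,
\[
\int_{B_r}|\nabla\psi|^2\leq C\int_{B_r}|\nabla\varphi|^2+Cr^2,
\]
has multiplicative constant $C=\mathfrak b^*/\mathfrak b_*>1$ in front of the competitor energy. Standard De~Giorgi/Giaquinta--Giusti theory for quasi-minimizers with such a constant yields $C^{0,\alpha}$ only for \emph{some} $\alpha\in(0,1)$ depending on $C$, not for every $\alpha$. To obtain every $\alpha<1$ (as stated), one should compare $\psi$ not with the harmonic replacement but with the minimizer $v$ of $\int_{B_r}\mcG(\nabla v,v)$ subject to $v=\psi$ on $\partial B_r$; then the minimality of $\psi$ for $J^\v$ together with the strong convexity \eqref{eq:convex0} and the second-derivative bounds \eqref{eq:upper_pzzG2} give
\[
\int_{B_r}|\nabla(\psi-v)|^2\leq Cr^2,
\]
after absorbing the lower-order terms via Poincar\'e. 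Since $v$ solves a uniformly elliptic equation with smooth dependence on $\nabla v$, it enjoys the Campanato decay $\int_{B_\rho}|\nabla v|^2\leq C(\rho/r)^2\int_{B_r}|\nabla v|^2$, and the usual iteration then yields $\int_{B_r}|\nabla\psi|^2\leq C_\delta r^{2-\delta}$ for every $\delta>0$, hence $\psi\in C^{0,\alpha}_{loc}$ for every $\alpha\in(0,1)$. This refined comparison is precisely the Alt--Caffarelli mechanism that the cited reference uses.
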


Note that by the definition of $Q$ in \eqref{def:Q} and the upper bound of $\bar\rho^*/\bar\rho_*$ in \eqref{rho*_*} one has
\begin{equation}\label{Q_rho}
	\bar\rho_*\|\bar u\|_{L^1([0,\bar H])}\leq Q\leq \bar\rho^*\|\bar u\|_{L^1([0,\bar H])}\leq C\bar\rho_*\|\bar u\|_{L^1([0,\bar H])},
\end{equation}
where the constant $C>0$ only depends on $\g$. Then with the help of the comparison principle for the elliptic equation \eqref{basic_eq_psi} (cf. \cite[Lemma 4.5]{LSTX2023}), one can prove the following (optimal) Lipschitz regularity and the nondegeneracy property of the minimizers as in \cite[Section 4.3]{LSTX2023}.
	\begin{lemma}\label{thm:Lipschitz}
	{Let $\psi$ be a minimizer for \eqref{variation problem}, then $\psi\in C^{0,1}_{loc}(\Omega_{\mu,R})$.} Moreover, for any connected domain $K\Subset \Omega_{\mu,R}$ containing a free boundary point, the Lipschitz constant of $\psi$ in $K$ is estimated by $C\Lambda$, where $C$ depends on $\gamma,\,\epsilon,\,\bar u,\, \frac{\Lambda}{Q},\, K$, and ${\rm dist}(K,\p\Omega_{\mu,R})$.
\end{lemma}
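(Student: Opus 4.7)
The plan is to establish the Lipschitz bound by adapting the Alt--Caffarelli one-phase regularity theory, exactly along the lines of \cite[Section 4.3]{LSTX2023}, with the only essential modification being control of the lower-order $z$-dependence of $\mathcal G$. Thanks to Proposition \ref{Gproperties} the functional behaves structurally like the model functional $\int(|\nabla\varphi|^2+\lambda^2\chi_{\{\varphi<Q\}})$, so the classical arguments apply up to perturbation terms that vanish (or are made small) by the smallness hypothesis \eqref{thm_condition2}. I would split the argument into an interior estimate and a free-boundary estimate.

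\textbf{Step 1 (interior $C^{1,\alpha}$ away from $\Gamma_\psi$).} On the open set $\{\psi<Q\}\cap\Omega_{\mu,R}$ the minimizer is a weak solution of the Euler--Lagrange equation \eqref{EL equ}. The ellipticity bounds \eqref{eq:convex0} make this a uniformly elliptic quasilinear equation (with ellipticity ratio controlled by $\epsilon$), and the right-hand side $\partial_z\mathcal G$ is bounded by \eqref{eq:upper_pzzG1}. Standard De Giorgi--Nash--Moser regularity together with Ladyzhenskaya--Uraltseva gradient estimates then give $\psi\in C^{1,\alpha}_{\mathrm{loc}}(\{\psi<Q\}\cap\Omega_{\mu,R})$ with a gradient bound that depends only on the ellipticity constants and the distance to $\partial\{\psi<Q\}\cup\partial\Omega_{\mu,R}$. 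In the interior of $\{\psi=Q\}$ the gradient is trivially zero.

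\textbf{Step 2 (linear growth at free boundary points).} For a free boundary point $x_0\in K\cap\partial\{\psi<Q\}$ choose $r_0\sim\mathrm{dist}(K,\partial\Omega_{\mu,R})$ so that $B_{r_0}(x_0)\Subset\Omega_{\mu,R}$. The key claim, mirroring \cite[Lemma 4.8]{LSTX2023}, is that for every $r\in(0,r_0)$,
\[
\sup_{B_r(x_0)}(Q-\psi)\leq C\Lambda r,
\]
with $C$ depending on $\gamma,\epsilon,\bar u,\Lambda/Q,K$ and $\mathrm{dist}(K,\partial\Omega_{\mu,R})$. To prove it, compare $\psi$ with the $\mathcal G(\cdot,\psi)$-harmonic replacement $v$ on $B_r(x_0)$ (i.e.\ the solution of $\partial_i(\partial_{p_i}\mathcal G(\nabla v,\psi))=0$ with $v=\psi$ on $\partial B_r(x_0)$, frozen $z$-argument). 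Using $\min(\psi,v)$ and $\max(\psi,v)$ as competitors, the minimality of $\psi$ yields
\[
\int_{B_r}\bigl(\mathcal G(\nabla\psi,\psi)-\mathcal G(\nabla v,\psi)\bigr)\,dx+\lambda_\epsilon^2|\{\psi<Q\}\cap B_r|\leq \lambda_\epsilon^2|\{v<Q\}\cap B_r|,
\]
and strong convexity \eqref{eq:convex0} converts the left-hand side into a coercive quadratic form in $\nabla(\psi-v)$. Together with Harnack/boundary-Harnack for $v$ applied on $\partial B_r(x_0)$, this produces the Alt--Caffarelli dichotomy: either $Q-\psi$ is controlled linearly on $B_{r/2}$, or the $\chi_{\{\psi<Q\}}$-term can be shrunk by an admissible inward perturbation, contradicting minimality. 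The perturbation terms coming from the $z$-dependence of $\mathcal G$ are absorbed using \eqref{eq:upper_pzzG1}--\eqref{eq:com_energy} and the smallness of $\bar\kappa$ in \eqref{thm_condition2}.

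\textbf{Step 3 (Lipschitz bound).} The linear growth of Step 2 combined with the interior estimate of Step 1 yields the pointwise gradient bound: if $x\in K\cap\{\psi<Q\}$ and $d(x):=\mathrm{dist}(x,\Gamma_\psi)$, then choosing $x_0\in\Gamma_\psi$ with $|x-x_0|=d(x)$ and applying the interior $C^{1,\alpha}$ estimate on $B_{d(x)/2}(x)\subset\{\psi<Q\}$ gives
\[
|\nabla\psi(x)|\leq\frac{C}{d(x)}\,\sup_{B_{d(x)}(x)}(Q-\psi)\leq C'\Lambda,
\]
so $\psi\in C^{0,1}_{\mathrm{loc}}(\Omega_{\mu,R})$ with the stated dependence.

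\textbf{Main obstacle.} The chief technical point is Step 2: unlike the semilinear Alt--Caffarelli problem, the presence of $\psi$ inside $\mathcal G$ and inside $\partial_z\mathcal G$ makes the comparison with a fixed-$z$ harmonic replacement only approximate. The estimates \eqref{eq:upper_pzzG1}--\eqref{eq:upper_pzzG2}, which are made small via \eqref{thm_condition2}, are precisely what is needed so that these perturbations can be absorbed into the main quadratic coercive term; this is exactly the role played by the analogous structural bounds in \cite{LSTX2023}, and once these are in hand the remainder of the proof is a verbatim repetition of that reference.
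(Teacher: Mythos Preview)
Your proposal is correct and follows essentially the same route as the paper, which simply invokes \cite[Section~4.3]{LSTX2023} once the structural estimates of Proposition~\ref{Gproperties} and the comparison principle are in place. One minor point: the displayed energy inequality in Step~2 does not literally follow from the $\min/\max$ competitor trick as you state it (that argument produces $\mathcal G(\nabla v,v)$ rather than $\mathcal G(\nabla v,\psi)$ on the right), but this is a cosmetic slip that is easily repaired and does not affect the overall strategy.
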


	\begin{lemma}\label{lem:nondeg}
	Let $\psi$ be a minimizer for \eqref{variation problem}. Then for any $p>1$ and any $0<r<1$, there exist {positive constants $c_r=c_r(\gamma,\epsilon,\bar u, r,p)$ small
		and $R_0=R_0(\gamma,\epsilon,\bar u, r)\in (0,1)$, 
		such that for any $B_{R}(\bar x)\subset \Omega_{\mu,R}$ with $R\leq \min\{R_0, c_r\frac{\Lambda}{Q}\}$},
	if
	\begin{equation*}
		\frac{1}{R}\left( 
		\dashint_{B_R(\bar x)}|Q-\psi|^p\right)^{1/p} \leq c_r\Lambda,
	\end{equation*}
	then $\psi=Q$ in $B_{rR}(\bar x)$.
\end{lemma}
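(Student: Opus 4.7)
The plan is to argue by comparison with an energy competitor that pushes $\psi$ up to the obstacle level $Q$ on the inner ball, following the Alt-Caffarelli-Friedman nondegeneracy scheme in its $L^p$-averaged form. Fix an intermediate radius $r'=(1+r)/2$, choose a cutoff $\eta\in C_c^\infty(B_{r'R}(\bar x))$ with $\eta\equiv 1$ on $B_{rR}(\bar x)$ and $|\n\eta|\leq C/((1-r)R)$, and set $\phi:=\psi+(Q-\psi)\eta$ inside $B_R(\bar x)$ and $\phi:=\psi$ outside. Then $\phi\in\K_{\psi^{\sharp}_{\mu,R}}$, $\phi\equiv Q$ on $B_{rR}(\bar x)$, $\psi\leq\phi\leq Q$, and $\{\phi<Q\}\cap B_R(\bar x)\subseteq\{\psi<Q\}\cap (B_R(\bar x)\setminus B_{rR}(\bar x))$; in particular the measure term of the functional drops by at least $\ld_\v^2\,|\{\psi<Q\}\cap B_{rR}(\bar x)|$.

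Minimality of $\psi$ then yields $J^\v_{\mu,R,\Ld}(\psi)\leq J^\v_{\mu,R,\Ld}(\phi)$, which rearranges to
\[
\ld_\v^2\,|\{\psi<Q\}\cap B_{rR}(\bar x)|\leq \int_{B_{r'R}(\bar x)}\bigl(\MG(\n\phi,\phi)-\MG(\n\psi,\psi)\bigr)\,dx.
\]
Using convexity and the $\v$-dependent Hessian bound for $\MG$ in $\mp$ from \eqref{eq:convex}-\eqref{eq:convex0}, the bounds on $z$-derivatives from \eqref{eq:upper_pzzG1}-\eqref{eq:upper_pzzG2}, the identity $\n\phi-\n\psi=-\eta\n\psi+(Q-\psi)\n\eta$, and the supersolution inequality \eqref{eq:supersol} tested against the admissible $\zeta:=\eta^2(Q-\psi)\geq 0$ (to eliminate the unknown $\int\eta^2|\n\psi|^2$ contribution that appears when one expands $|\n\phi|^2$), I expect to bound the right-hand side by
\[
\frac{C}{(1-r)^2R^2}\int_{B_R(\bar x)}(Q-\psi)^2\,dx + C\int_{B_R(\bar x)}(Q-\psi)\,dx,
\]
with $C=C(\g,\v,\bar u)$.

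From here H\"older's inequality converts the $L^1$ and $L^2$ norms of $Q-\psi$ on $B_R(\bar x)$ to the hypothesized $L^p$ average. Using the lower bound $\ld_\v^2\geq c\Ld^2$ from the remark following Lemma \ref{free BC} and the restriction $R\leq c_r\Ld/Q$ (which keeps the linear $L^1$ contribution subdominant relative to the quadratic one), the assumption $\frac{1}{R}\bigl(\dashint_{B_R(\bar x)}|Q-\psi|^p\bigr)^{1/p}\leq c_r\Ld$ forces
\[
|\{\psi<Q\}\cap B_{rR}(\bar x)|\leq C c_r^2\, |B_{rR}(\bar x)|.
\]
Choosing $c_r$ (and, if needed, $R_0$) small enough makes the constant on the right strictly less than $1$. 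Combined with the nondegeneracy tail provided by the Lipschitz estimate of Lemma \ref{thm:Lipschitz} (at any hypothetical free boundary point $x_0\in B_{rR}(\bar x)$ one has a full ball on which $Q-\psi$ grows linearly away from $0$, giving a positive lower density for $\{\psi<Q\}$), this density upper bound is incompatible with the presence of any free boundary point in $B_{rR}(\bar x)$. Hence $\psi\equiv Q$ on $B_{rR}(\bar x)$, as claimed.

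The main obstacle is the second step: bounding the bulk energy difference purely by $\|Q-\psi\|_{L^p(B_R(\bar x))}$ without any residual $\int\eta^2|\n\psi|^2$ term on the right. Because $\MG$ is not the Dirichlet integrand but has genuine $z$-dependence coming from $\B$ and $\S$, the supersolution test has to be combined carefully with the pointwise upper bounds on $\p_zG_\v$ and $\p_{\mp z}G_\v$ in Proposition \ref{Gproperties} in order to absorb all gradient-of-$\psi$ contributions into zero-order terms controlled by $Q-\psi$; the subsonic truncation parameter $\v$ enters precisely at this absorption, which is why the constant $c_r$ in the final statement is allowed to depend on $\v$.
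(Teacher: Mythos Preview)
The paper itself does not prove this lemma; it defers to \cite[Section~4.3]{LSTX2023} after verifying (Proposition~\ref{Gproperties}) that $\MG$ enjoys the same structural properties as in the isentropic case. Your proposal is therefore a reconstruction of that argument, and the overall competitor-plus-Caccioppoli scheme you describe (push $\psi$ up to $Q$ via $\phi=\psi+(Q-\psi)\eta$, compare energies, and test the supersolution inequality \eqref{eq:supersol} against $\eta^2(Q-\psi)$ to absorb the stray $\int\eta^2|\n\psi|^2$) is indeed the standard ACF route and is correctly set up.

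There is, however, a genuine gap in your last step. From the measure bound
\[
|\{\psi<Q\}\cap B_{rR}(\bar x)|\leq Cc_r^2\,|B_{rR}(\bar x)|
\]
you conclude $\psi\equiv Q$ on $B_{rR}(\bar x)$ by appealing to ``positive lower density for $\{\psi<Q\}$'' at a hypothetical free boundary point, which you attribute to the Lipschitz estimate. But Lemma~\ref{thm:Lipschitz} only gives the \emph{upper} bound $Q-\psi\leq C\Ld\,\text{dist}(\cdot,\G_\psi)$; it says nothing about the density of the phase $\{\psi<Q\}$. Positive lower density of $\{\psi<Q\}$ at free boundary points is in fact a \emph{consequence} of nondegeneracy (combined with Lipschitz), so invoking it here is circular.

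The usual repair is to exploit the gradient term that your competitor comparison also produces. Using $\MG(\n\psi,\psi)\geq\tfrac{\mathfrak b_*}{2}|\n\psi|^2-\v\d'(Q-\psi)$ from \eqref{eq:com_energy} on the set $\{\eta=1\}\supset B_{rR}(\bar x)$, the same inequality upgrades to
\[
\ld_\v^2\,|\{\psi<Q\}\cap B_{rR}|+\frac{\mathfrak b_*}{2}\int_{B_{rR}}|\n\psi|^2
\;\leq\;\frac{C}{(1-r)^2R^2}\int_{B_R}(Q-\psi)^2+C\int_{B_R}(Q-\psi).
\]
The measure bound with $Cc_r^2<1$ forces $|\{\psi=Q\}\cap B_{rR}|\geq (1-Cc_r^2)|B_{rR}|>0$, so the Poincar\'e inequality on $B_{rR}$ with a zero set of definite proportion controls $\frac{1}{(rR)^2}\int_{B_{rR}}(Q-\psi)^2$ by $\int_{B_{rR}}|\n\psi|^2$, hence by the same right-hand side. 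This reproduces the hypothesis at scale $rR$ (with the same constant once $c_r$ and $R_0$ are chosen appropriately), and one iterates to all scales $r^kR$; continuity of $\psi$ then gives $\psi\equiv Q$ on $B_{rR}(\bar x)$. Without this iteration (or an equivalent device), a bare density bound strictly below~$1$ does not by itself rule out a nonempty open set $\{\psi<Q\}\cap B_{rR}(\bar x)$.
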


The measure theoretic properties of the free boundary follows from the Lipschitz regularity and nondegeneracy of the minimizers.
Then with the help of the estimates for $|\n\psi|$ near the free boundary and the improvement of flatness arguments, one obtains the following regularity of the free boundary.
\begin{proposition}
	Let $\psi$ be a minimizer for \eqref{variation problem}. Let $\mcG$ be the function defined in \eqref{label_3} which  satisfies all properties in Proposition \ref{Gproperties}. Then the free boundary $\Gamma_\psi$ is locally $C^{k+1,\alpha}$ if $\mcG(\bp,z)$ is $C^{k, \alpha}$ in its components ($k\geq1, 0<\alpha<1$), and it is locally real analytic if $\mcG(\bp,z)$ is real analytic.
\end{proposition}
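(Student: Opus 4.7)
The plan is to mirror the variational free boundary regularity program of Alt, Caffarelli and Friedman, adapted to the quasilinear energy $\mcG$ and following the isentropic treatment in \cite{LSTX2023}. Combining the optimal Lipschitz regularity in Lemma \ref{thm:Lipschitz} with the nondegeneracy in Lemma \ref{lem:nondeg}, standard blow-up and compactness arguments should give that $\G_\psi$ has locally finite $\mathcal H^1$-measure and that both $\{\psi<Q\}$ and $\{\psi=Q\}$ have positive density at each free boundary point. Using the weak free boundary condition in Lemma \ref{free BC}, the monotonicity of $t\mapsto \Phi_\v(t,z)$ in \eqref{Phi_dt}, and the strong convexity \eqref{eq:convex0}, one can then classify every blow-up limit at a free boundary point as a one-plane solution of slope $\Ld$.

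Second, I would carry out a flatness-implies-$C^{1,\alpha}$ step. The goal is to show that if $\G_\psi$ lies in a thin strip between two parallel lines at some scale, then the flatness can be reduced by a definite factor at a smaller scale. The Alt-Caffarelli iteration should apply because $\p_{p_ip_j}\mcG$ is uniformly elliptic by \eqref{eq:convex0}, while the $\psi$-dependence of $\mcG$ contributes only lower-order perturbations whose sizes are controlled by $\delta$ and $\delta'$ in \eqref{eq:upper_pzzG1}-\eqref{eq:upper_pzzG2}; these are made quantitatively small by the choice \eqref{thm_condition2} with $\bar\k$ sufficiently small. Iterating this improvement yields that $\G_\psi$ is locally the graph of a $C^{1,\alpha}$ function near each free boundary point.

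Third, I would bootstrap the regularity by a partial hodograph transform. Writing $\G_\psi$ locally as $x_1=\Upsilon(x_2)$ and interchanging the roles of $x_1$ and $Q-\psi$ turns the one-phase free boundary problem into a quasilinear oblique boundary value problem on a fixed half-disc, in which $\psi=Q$ becomes a Dirichlet condition and $|\n\psi|=\Ld$ becomes a nonlinear Neumann-type condition. Since $\mcG$ is $C^{k,\alpha}$ in its arguments, the transformed operator and boundary operator have $C^{k,\alpha}$ data, so iterated Schauder estimates deliver $\Upsilon\in C^{k+1,\alpha}$. For the analytic case, the Kinderlehrer-Nirenberg (or Morrey) analyticity theorem for nonlinear elliptic boundary value problems applied to the straightened formulation produces real analyticity of $\Upsilon$.

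The hard part will be the flatness-improvement step, because unlike in the canonical Alt-Caffarelli one-phase problem the energy $\mcG(\bp,z)$ genuinely depends on $\psi$ through the Bernoulli and entropy functions $\B,\S$, and the free boundary condition encoded via $\Phi_\v$ likewise depends on $\psi$. The perturbative argument goes through exactly because the $z$-derivative estimates \eqref{eq:upper_pzzG1}-\eqref{eq:upper_pzzG2} are small, which is precisely where the smallness condition \eqref{thm_condition2} on the upstream density gradient enters the analysis; without this quantitative control the classical flatness iteration would not close.
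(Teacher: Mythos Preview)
Your proposal is correct and follows essentially the same route the paper indicates: the paper does not give a self-contained proof here but simply records that the Lipschitz regularity and nondegeneracy yield the measure-theoretic properties, after which ``the estimates for $|\nabla\psi|$ near the free boundary and the improvement of flatness arguments'' give the stated regularity, all by reference to \cite[Sections 4--6]{LSTX2023} and the Alt--Caffarelli--Friedman framework. Your three-step outline (blow-up classification, flatness improvement, hodograph bootstrap) is exactly this program.

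One small correction to your last paragraph: the constants $\delta,\delta'$ in \eqref{delta} are \emph{not} made small by \eqref{thm_condition2}; indeed $\delta'=C\epsilon^{-1}\kappa_0$ depends on $\kappa_0=\|\bar u'\|_{C^{0,1}}$, which is fixed by the data and not controlled by $\bar\kappa$. The smallness hypothesis \eqref{thm_condition2} is used in the proof of Proposition \ref{Gproperties} to obtain \eqref{rho*_*} and \eqref{label_4}, which in turn yield the sign condition \eqref{supportG} and the clean form of the bounds, but it does not drive $\delta,\delta'$ to zero. The improvement-of-flatness step does not need $\delta,\delta'$ small: since $\psi\to Q$ near $\Gamma_\psi$, the $z$-dependence of $\mcG$ contributes terms that scale out under blow-up, so boundedness of $\p_z\mcG$, $\p_{\bp z}\mcG$, $\p_{zz}\mcG$ suffices. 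This is a misattribution in your commentary, not a gap in the method.
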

\begin{remark}
Under our assumptions that $\bar\rho,\, \bar u\in C^{1,1}([0,\bar{H}])$, the function $\MG(\mp,z)$ is smooth in $\mp$ and $C^{1,1}$ in $z$. Thus in our case the free boundary is locally $C^{2,\alpha}$ for any $\alpha\in (0,1)$.
\end{remark}

\subsection{Fine properties of the free boundary.}
In order to show the fine properties of the free boundary, we take a specific boundary value on $\p\O_{\mu,R}$ for the variational problem \eqref{variation problem}.
Let $s\in (\frac{1}{2},1)$ be a fixed constant and $b_\mu\in(1,\bar H)$ such that $\Theta(b_\mu)=-\mu$, where $\Theta$ is defined in \eqref{nozzle}. Choose a point $(-\mu, b'_\mu)$ with $1<b'_\mu< b_\mu$ and $k_\mu:=b_\mu-b'_\mu$ small depending on $\bar u, \gamma,$ and $\v$. Define $\psi^\dag(x_2)$ as the minimizer of $\int_{0}^{\tilde{\ubar H}}G_\epsilon(|v'|^2,v)$ in the admissible set $K^\dag:=\{v\in C^{0,1}([0,\tilde{\ubar H}];\mathbb R): v(0)=0, v(\tilde{\ubar H})=Q\}$, where $\tilde{\ubar H}\in(0,1)$ is small enough such that $(\psi^\dag)'(\tilde{\ubar H})>\Lambda$.
Set
\begin{equation}\label{eq:bdry_datum}
	\psi^{\sharp}_{\mu,R}(x_1,x_2):=\left\{
	\begin{aligned}
		&0 && \hbox{if } x_1=-\mu, \,\, 0<x_2<b'_\mu,\\
		&Q\left(\frac{x_2-b'_\mu}{k_\mu}\right)^{1+s} && \hbox{if } x_1=-\mu, \,\, b'_\mu\leq x_2\leq b_\mu,\\
		&\min\{\psi^\dag,Q\} &&\hbox{if } x_1=R, \,\, 0<x_2<1,\\
		&0 &&\hbox{if } x_2=0,\\
		&Q &&\hbox{if } (x_1, x_2)\in \N_1\cup \left([0,R]\times \{1\}\right).
	\end{aligned}
	\right.
\end{equation}
Note that $\psi^{\sharp}_{\mu,R}$ is continuous and it satisfies $0\leq\psi^{\sharp}_{\mu,R}\leq Q$.
If $k_{\mu}$ is sufficiently small,  $\psi^{\sharp}_{\mu,R}(-\mu,\c)$ is a subsolution and $\psi^{\sharp}_{\mu,R}(R,\c)$ is a supersolution to \eqref{EL equ} in $\O_{\mu,R}$.

Let $\psi_{\mu,R,\Ld}$ be a minimizer for \eqref{variation problem} with the boundary value $\psi_{\mu,R}^\sharp$.
By the strong maximum principle, one has
$$\psi^{\sharp}_{\mu,R}(-\mu,x_2)<\psi_{\mu,R,\Ld}(x_1,x_2)\leq\psi^{\sharp}_{\mu,R}(R,x_2)\q \text{ for all } (x_1,x_2)\in \O_{\mu,R}.$$
Then one can show that $\psi_{\mu,R,\Lambda}$ is the unique minimizer for  \eqref{variation problem}, and $\psi_{\mu,R,\Lambda}$ is monotone increasing in the $x_1$-direction, i.e., $\p_{x_1}\psi_{\mu,R,\Ld}\geq0$ in $\O_{\mu,R}$. Consequently, the free boundary $\Gamma_{\psi_{\mu,R,\Lambda}}$ (which is not empty with the boundary value $\psi^{\sharp}_{\mu,R}$) is an $x_2$-graph. More precisely, there exists a function $\ur_{\MH,\Ld}\in(-\mu,R]$ and $\ul H_{\mu,R,\Ld}\in[\tilde{\ubar H},1)$ such that
$$\G_{\psi_{\mu,R,\Lambda}}=\{(x_1,x_2): x_1=\ur_{\mu,R,\Ld}(x_2),\, x_2\in(\ul H_{\mu,R,\Ld},1)\}.$$
By the non-oscillation property (\cite[Lemma 5.4]{LSTX2023}) for the free boundary and the unique continuation property (cf. \cite{MR1809741} and \cite[Lemma 5.5]{LSTX2023}) for the elliptic equation, one concludes that  $\ur_{\mu,R,\Ld}$ is continuous and  $\lim_{x_2\to1-}\ur_{\mu,R,\Ld}(x_2)$ exists.

\subsection{Continuous fit and smooth fit.}
The following proposition shows the existence of a suitable $\Ld>0$ such that the free boundary $\G_{\psi_{\mu,R,\Ld}}$ fits the outlet $A=(0,1)$ of the nozzle continuously.

\begin{proposition}\label{prop:continuous_fit}
	Let $\psi:=\psi_{\mu,R,\Lambda}$ be the minimizer of \eqref{variation problem} with the boundary value $\psi^\sharp_{\mu,R}$ as in \eqref{eq:bdry_datum}. There exists $\Lambda=\Lambda(\mu, R)>0$ such that $\Upsilon_{\mu,R,\Lambda}(1)=0$. Moreover, $\Lambda$ satisfies $C^{-1} Q \leq \Lambda(\mu, R)\leq C Q$ for positive constant $C$ depending on $\gamma$, $\epsilon$, and $\bar u$, 
	but independent of $\mu$ and $R$.
\end{proposition}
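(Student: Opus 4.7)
The plan follows the classical Alt--Caffarelli--Friedman continuity-monotonicity scheme in the parameter $\Lambda$. I would first introduce the detachment function
\[
\mathcal U(\Lambda) \;:=\; \lim_{x_2\to 1-}\Upsilon_{\mu,R,\Lambda}(x_2) \;\in\; [-\mu, R],
\]
which is well-defined since $\Upsilon_{\mu,R,\Lambda}$ is monotone by the non-oscillation property and $\partial_{x_1}\psi_{\mu,R,\Lambda}\geq 0$. The goal is to find $\Lambda>0$ with $\mathcal U(\Lambda)=0$, i.e., the free boundary meeting the corner $A=(0,1)$ of the domain.

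My first step would be \emph{monotonicity} in $\Lambda$. By the strict monotonicity $\p_t\Phi_\v(t,Q)>0$ from \eqref{Phi_dt}, the constant $\lambda_\v$ is strictly increasing in $\Lambda$, so a larger $\Lambda$ imposes a heavier penalty on $\{\psi<Q\}$. A standard comparison using $\min\{\psi_{\Lambda_1},\psi_{\Lambda_2}\}$ and $\max\{\psi_{\Lambda_1},\psi_{\Lambda_2}\}$ as competitors for the two minimization problems (together with the convexity in $\mathbf p$ from \eqref{eq:convex0} and the sign of $\p_z\MG$ from \eqref{supportG}) yields $\psi_{\Lambda_1}\leq\psi_{\Lambda_2}$ throughout $\Omega_{\mu,R}$ for $\Lambda_1<\Lambda_2$. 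Consequently $\{\psi_{\Lambda_2}=Q\}\supset\{\psi_{\Lambda_1}=Q\}$, and $\mathcal U(\Lambda)$ is non-increasing.

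Next, I would treat the two \emph{limiting regimes} by barriers. For small $\Lambda$, the free-boundary penalty $\lambda_\v^2$ is small, so the minimizer stays close to the unconstrained elliptic solution; the coincidence set is only a thin layer along the flat top, forcing $\mathcal U(\Lambda)$ to approach $R$. In particular $\mathcal U(\Lambda)>0$ for all sufficiently small $\Lambda$. For large $\Lambda$, one builds an upper barrier whose coincidence set engulfs the outlet region, forcing $\{\psi=Q\}$ to extend into $\{x_1<0,\,x_2<1\}$ around the corner $A$; this gives $\mathcal U(\Lambda)<0$. Continuity of $\mathcal U(\Lambda)$ in $\Lambda$ then follows from the uniform Lipschitz estimate of Lemma \ref{thm:Lipschitz}, uniqueness of the minimizer, and the non-degeneracy of Lemma \ref{lem:nondeg} (which prevents $\Upsilon_{\mu,R,\Lambda}(1)$ from ``jumping'' over $0$ in the limit). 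Combining monotonicity, the two sign assertions, and continuity, the intermediate value theorem produces the desired $\Lambda(\mu,R)$.

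The quantitative bounds $C^{-1}Q\leq\Lambda\leq CQ$ come from the free-boundary condition $|\nabla\psi|=\Lambda$ at the continuous-fit value: integrating $\partial_{x_2}\psi=\rho u_1$ across the flow region (whose asymptotic height is bounded between two constants depending only on $\gamma, \v, \bar u$) identifies the mean value of $|\nabla\psi|$ along any vertical section with a quantity comparable to $Q$; combined with the Lipschitz bound and non-degeneracy near $\Gamma_\psi$, this pins $\Lambda$ two-sidedly to $Q$ with constants independent of $\mu$ and $R$. The main obstacle will be the continuity of $\mathcal U(\Lambda)$ at the critical value, where the free boundary touches the corner $A$: standard free-boundary regularity degenerates at a corner of the fixed wall, so one must combine non-degeneracy with a careful barrier analysis tailored to the specific boundary datum $\psi^\sharp_{\mu,R}$ in \eqref{eq:bdry_datum} to rule out limiting free boundaries that cross the outlet without meeting it.
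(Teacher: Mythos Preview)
Your outline is essentially the Alt--Caffarelli--Friedman continuity scheme that the paper adopts by reference to \cite{LSTX2023}: monotonicity of the minimizer in $\Lambda$ via the $\min/\max$ competitor trick, continuity of the detachment point from uniqueness and stability of minimizers, and the intermediate value theorem once the two limiting regimes are pinned down by barriers. So the strategy matches.

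One point deserves tightening. Your justification of the two-sided bound $C^{-1}Q\le\Lambda\le CQ$ appeals to an ``asymptotic height'' of the flow region, but in the truncated domain $\Omega_{\mu,R}$ there is no asymptotic state: the bounds must be obtained \emph{locally at the outlet} $A=(0,1)$ and must be uniform in $\mu,R$. The clean argument is the one you almost wrote elsewhere in the proposal: at the continuous-fit value, $A$ is a free-boundary point, so Lemma~\ref{thm:Lipschitz} gives $|\nabla\psi|\le C\Lambda$ in a fixed ball around $A$, and since $\psi$ drops from $Q$ to $0$ across the unit segment $\{0\}\times(0,1)$ one gets $Q\le C\Lambda$; conversely, Lemma~\ref{lem:nondeg} applied in that same ball forces $\Lambda\le CQ$ (otherwise nondegeneracy would make $\{\psi=Q\}$ invade the segment $\{0\}\times(0,1)$ below $A$, contradicting continuous fit). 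Framed this way the constants depend only on $\gamma,\epsilon,\bar u$ and the local geometry of $\N_1$ near $A$, which is exactly the uniformity claimed.
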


The continuous fit implies the following smooth fit.

\begin{proposition}\label{prop:smooth_fit}
		Let $\psi:=\psi_{\mu,R,\Lambda}$ be the minimizer of \eqref{variation problem} with the boundary value $\psi^\sharp_{\mu,R}$ as in \eqref{eq:bdry_datum} and  $\Lambda$ as in Proposition \ref{prop:continuous_fit}.
	Then $\N_1\cup \Gamma_\psi$ is $C^1$ in a neighbourhood of the point $A=(0,1)$, and $\nabla\psi$ is continuous in a $\{\psi<Q\}$-neighborhood of $A$.
\end{proposition}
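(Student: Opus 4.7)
The plan is to adapt the smooth-fit argument of Alt--Caffarelli--Friedman, as carried out in \cite[Section 6]{LSTX2023} for the isentropic case, to the present non-isentropic setting. The structural properties of $\mathcal{G}$ recorded in Proposition~\ref{Gproperties} — uniform ellipticity, smoothness in $\mathbf{p}$, and $C^{1,1}$ dependence on $z$ — are precisely what is needed for that argument to carry over in the full-Euler case with non-constant $\mathcal{B}$ and $\mathcal{S}$. The desired continuity of $\nabla\psi$ at $A$ will follow once we have established the $C^1$ fit, via standard boundary regularity for the quasilinear equation \eqref{EL equ}.

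First, I would use the continuous fit $\Upsilon_{\mu,R,\Lambda}(1)=0$ from Proposition~\ref{prop:continuous_fit}, together with monotonicity $\partial_{x_1}\psi \geq 0$ and the fact that $\Gamma_\psi$ is an $x_2$-graph, to describe the local geometry at $A=(0,1)$: in a small neighborhood of $A$, one branch of $\partial\{\psi<Q\}$ is $\N_1$ and the other is $\Gamma_\psi$, and both terminate at $A$. I would then flatten $\N_1$ locally via a $C^{1,\bar\alpha}$ diffeomorphism $\Psi$ sending $A$ to the origin and $\N_1$ to a coordinate half-line, and rewrite the equation \eqref{EL equ} and the free-boundary condition of Lemma~\ref{free BC} in the new coordinates. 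Because $\Theta\in C^{1,\bar\alpha}$, the transformed equation retains the quasilinear structure and the uniform-ellipticity/regularity bounds of Proposition~\ref{Gproperties} up to bounded smooth multipliers from the Jacobian of $\Psi$.

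Next comes the blow-up: set $\psi_r(y):=(Q-\psi(A+ry))/r$ in the flattened coordinates. By Lemma~\ref{thm:Lipschitz} the family $\{\psi_r\}$ is locally uniformly Lipschitz, and by Lemma~\ref{lem:nondeg} the free boundaries $\partial\{\psi_r<Q\}$ do not collapse in the limit. Passing to a subsequence, $\psi_r\to\psi_0$ locally uniformly with Hausdorff convergence of the free boundaries, where $\psi_0\geq 0$ is a global minimizer in the half-plane of the frozen-coefficient variational problem obtained by replacing $\mathcal{B}$, $\mathcal{S}$ by their values at $A$. The limit $\psi_0$ vanishes on the flattened wall, satisfies $|\nabla\psi_0|=\Lambda$ on its free boundary in the weak sense of Lemma~\ref{free BC}, and inherits the monotonicity from $\partial_{x_1}\psi\geq 0$. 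A standard classification then forces $\psi_0$ to be an affine profile $\Lambda\,\operatorname{dist}(\cdot,L)$ in its positivity set, for some ray $L$ through the origin. Matching the Dirichlet condition $\psi_0=0$ on the flattened wall half-line with $|\nabla\psi_0|=\Lambda$ on $L$ rules out any wedge of opening strictly less than $\pi$ (in such a wedge an affine function vanishing on both boundary rays is identically zero, contradicting $|\nabla\psi_0|=\Lambda$), so $L$ must be the continuation of the wall, giving tangency of $\Gamma_\psi$ and $\N_1$ at $A$.

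The main obstacle is this last classification step: rigorously excluding nontrivial wedge angles between $\N_1$ and the blow-up limit of $\Gamma_\psi$, and then promoting tangency of every subsequential blow-up to tangency of the full limit. I would close this by combining Hopf's lemma on the flattened wall with the non-oscillation property of the free boundary (analogous to \cite[Lemma 5.4]{LSTX2023}), which together pin down the tangent direction uniquely and independently of the blow-up sequence. Once tangency is established, $\N_1\cup\Gamma_\psi$ is $C^1$ at $A$, and boundary $C^{1,\alpha}$ Schauder estimates applied to \eqref{EL equ} on the resulting $C^1$ domain with the constant Dirichlet datum $\psi=Q$ yield continuity of $\nabla\psi$ in a $\{\psi<Q\}$-neighborhood of $A$, completing the proof.
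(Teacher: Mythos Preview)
Your proposal is correct and follows the same route as the paper: the paper does not give an independent proof of Proposition~\ref{prop:smooth_fit} but refers the reader to \cite[Sections 4--6]{LSTX2023}, noting that the truncated variational problem \eqref{variation problem}--\eqref{K} has exactly the same form as in the isentropic case once Proposition~\ref{Gproperties} is in hand. Your sketch of the Alt--Caffarelli--Friedman blow-up and wedge-exclusion argument is precisely the content of that reference, so your approach coincides with the paper's.
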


\section{Existence and uniqueness of the solution to the jet problem}\label{sec:limit_solution}
In this section, we first remove the domain and subsonic truncations, and then study the far fields behavior of the jet flows. Consequently the existence and uniqueness of the solution to Problem \ref{jet problem} are obtained.
The proof is essentially the same as that in \cite[Sections 7 and 8]{LSTX2023}. Here we only give the details for the removal of subsonic truncation.
\subsection{Remove the the domain truncations.}\label{Remove the domain truncation}

Let the nozzle boundary $\N_1$ defined in \eqref{nozzle} satisfy \eqref{nozzleb}.
Given a density $\bar\rho\in C^{1,1}([0,\bar H])$, a horizontal velocity $\bar u\in C^{1,1}([0,\bar H])$ and a pressure $\bar P>0$ of the flow at the upstream, which satisfy  \eqref{thm_condition1}-\eqref{thm_condition2} and \eqref{Pbar_*} with the constant $\bar\k=\bar\k(\bar\rho_*, \bar u,\g,\bar H)$ in \eqref{thm_condition2} sufficiently small.
Let $\psi_{\mu,R,\Lambda}$ be the minimizer of the problem \eqref{variation problem}
with  $\psi^\sharp_{\mu,R}$  defined in \eqref{eq:bdry_datum}. Then for any $\mu_j, R_j\rightarrow \infty$, there is a subsequence (still labelled by $\mu_j$ and $R_j$) such that
$\Lambda_j:=\Lambda(\mu_j, R_j)\rightarrow \Lambda_\infty$ for some $\Lambda_\infty\in (0,\infty)$ and $\psi_{\mu_j, R_j, \Lambda_j}\rightarrow \psi_\infty$ in $C^{0,\alpha}_{loc}(\Omega)$ for any $\alpha\in (0,1)$.
Furthermore, the following statements hold:
\begin{itemize}
	\item [(i)] The function $\psi:=\psi_\infty$ is a local minimizer for the energy functional, i.e., for any $D\Subset \Omega$,  one has  $J^\epsilon(\psi_\infty)\leq J^\epsilon(\varphi)$ for all  $\varphi=\psi_\infty$ on {$\partial D$}, where
	\begin{align*}
		J^\epsilon(\varphi):=\int_{D} G_\epsilon(|\nabla\varphi|^2,\varphi)+\lambda_{\epsilon,\infty}^2 \chi_{\{\varphi<Q\}}\ dx
		\quad\text{and} \quad \lambda_{\epsilon,\infty}:=\sqrt{\Phi_\epsilon(\Lambda_\infty^{{2}},Q)}.
	\end{align*}
	Besides, $\psi$ solves
	\begin{equation}\label{eq_limiting_sol}
		\left\{
		\begin{aligned}
			&\nabla\cdot\left(g_\epsilon(|\nabla \psi|^2,\psi)\nabla \psi\right)-\p_z G_\epsilon (|\nabla \psi|^2,\psi)=0 &&\text{ in } \mathcal{O},\\
			&\psi =0 &&\text{ on }\N_0,\\
			&\psi =Q &&\text{ on } \N_1 \cup \Gamma_{\psi},\\
			&|\nabla \psi| =\Lambda_\infty &&\text{ on } \Gamma_{\psi},
		\end{aligned}
		\right.
	\end{equation}
	where $\mathcal{O}:=\Omega\cap \{\psi<Q\}$ is the flow region, $\Gamma_{\psi}:=\p\{\psi<Q\}\backslash \N_1$ is the free boundary, and  $\p_zG_\epsilon(|\nabla\psi|^2,\psi)$ satisfies \eqref{dzG_subsonic} in the subsonic region $|\nabla\psi|^2\leq (1-\epsilon)\t_c(\psi)$.
	\item [(ii)] The function $\psi$ is in $C^{2,\alpha}(\mathcal{O})\cap C^1(\overline{\mathcal{O}})$, and it satisfies $\p_{x_1}\psi\geq 0$ in $\Omega$.
	\item[(iii)] The free boundary $\Gamma_{\psi}$ is given by the graph $x_1=\Upsilon(x_2)$, where $\Upsilon$ is a $C^{2,\alpha}$ function as long as it is finite.
	\item[(iv)] At the orifice $A=(0,1)$ one has $\lim_{x_2\rightarrow 1-}\Upsilon(x_2)=0$. Furthermore, $\N_1\cup\Gamma_{\psi}$ is $C^1$ around $A$, in particular, $\Theta'(1)=\lim_{x_2\rightarrow 1-}\Upsilon'(x_2)$.
	\item[(v)] There is a constant $\ubar H\in (0,1)$ such that $\Upsilon(x_2)$ is finite if and only if $x_2\in (\ubar H, 1]$, and $\lim_{x_2\rightarrow \ubar H+} \Upsilon(x_2)=\infty$. Furthermore, there exists a constant $a>0$ sufficiently large, such that $\Gamma_{\psi}\cap \{x_1>a\}= \{(x_1, f(x_1)): a<x_1<\infty\}$ for some $C^{2,\alpha}$ function $f$ and $\lim_{x_1\rightarrow \infty}f'(x_1)=0$.
\end{itemize}

\subsection{Remove the subsonic truncation}\label{Remove the subsonic truncation}
In this subsection, we remove the subsonic truncation introduced in Section \ref{subsec_subsonic truncation} provided the upstream pressure of the flow is sufficiently large. 

\begin{proposition}\label{subsonic pro}
Let $\psi:=\psi_\infty$ be a solution obtained in Section \ref{Remove the domain truncation}.
Suppose that $\bar P>\bar P^*$ for some $\bar P^*=\bar P^*(\bar\rho_*,\bar u, \g, \v,\N_1)\geq\tilde P_*$, where $\tilde P_*$ is defined in \eqref{Pbar_*}.
Then
	\begin{equation}\label{subsonic}
		|\n\psi|^2\leq(1-\v)\t_c(\psi)
	\end{equation}
	where $\t_c$ is defined in \eqref{tc}.
\end{proposition}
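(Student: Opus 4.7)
The strategy is to exploit the coarse scaling $\mathfrak t_c(\psi)\gtrsim\bar P$ from \eqref{tc_bound} against a $\bar P$-independent $L^\infty$ bound on $\nabla\psi$. Concretely, once one proves $\|\nabla\psi\|_{L^\infty(\overline{\mathcal O})}\leq C_\sharp$ with $C_\sharp$ depending only on $\gamma,\epsilon,\bar u,\bar\rho_\ast,\bar\rho^\ast$ and the nozzle geometry, but independent of $\bar P$ (as long as $\bar P\geq\tilde P_\ast$ and \eqref{thm_condition2} holds), one immediately obtains $|\nabla\psi|^2/\mathfrak t_c(\psi)\leq C_\sharp^2/(c\bar P)$. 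Choosing $\bar P^\ast\geq\max\{\tilde P_\ast,\,C_\sharp^2/(c(1-\epsilon))\}$ then yields \eqref{subsonic} throughout $\mathcal O$. Inside the region $\{|\nabla\psi|^2\leq(1-\epsilon)\mathfrak t_c(\psi)\}$ the truncation is inactive by \eqref{gm=g}, so the final bound is genuinely a bound for the full Euler stream equation \eqref{elliptic equ}.

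For the gradient bound itself I would argue piecewise. On the free boundary, Proposition \ref{prop:continuous_fit} gives $|\nabla\psi|=\Lambda_\infty\leq CQ$ with $C=C(\gamma,\epsilon,\bar u)$, and $Q$ depends only on the upstream $\bar\rho,\bar u$ by \eqref{def:Q}. On the rigid walls $\mathcal N_0,\mathcal N_1$ the slip condition forces $\nabla\psi$ to be normal to the wall, and the conservation of the Bernoulli and entropy functions along each streamline together with \eqref{BS relation} expresses $|\nabla\psi|=\rho|\mathbf u|$ in terms of upstream data, yielding again a $\bar P$-independent bound. In the interior, the key observation is that, by Lemma \ref{g properties} and Proposition \ref{Gproperties}, the coefficients of \eqref{eq_limiting_sol} and the inhomogeneous term $\partial_z G_\epsilon$ admit $\bar P$-independent bounds under \eqref{thm_condition2}: the factor $\bar P$ appearing in the $\mathcal B',\mathcal S'$ estimates of Proposition \ref{prop:BS_def} is absorbed by $\kappa_1\leq\bar\kappa/\bar P$, so e.g.\ the estimate $|\partial_z\mathcal G|\leq \epsilon\delta'=C\kappa_0$ from \eqref{eq:upper_pzzG1} is independent of $\bar P$. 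Standard De Giorgi--Nash--Moser theory together with boundary $C^{1,\alpha}$-estimates for quasilinear elliptic equations then promote the boundary and interior bounds to the desired global bound $\|\nabla\psi\|_{L^\infty(\overline{\mathcal O})}\leq C_\sharp$.

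The main obstacle is that $\mathcal O$ is unbounded and $\mathcal N_1\cup\Gamma_\psi$ is only $C^1$ at the orifice $A=(0,1)$ (Proposition \ref{prop:smooth_fit}). The unboundedness is handled by combining the monotonicity $\partial_{x_1}\psi\geq 0$ with the upstream asymptotic \eqref{asymptotic rup} and the downstream flatness $\lim_{x_1\to\infty}f'(x_1)=0$ of the free boundary to reduce the estimate to a sequence of bounded subdomains on which the Lipschitz bound of Lemma \ref{thm:Lipschitz} applies uniformly. Near $A$ one uses the continuity of $\nabla\psi$ asserted in Proposition \ref{prop:smooth_fit} together with a local barrier built from the matched slopes $\Theta'(1)=\Upsilon'(1)$ to prevent gradient blow-up. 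Once $C_\sharp$ is so obtained, the choice of $\bar P^\ast$ above concludes the proof of \eqref{subsonic}.
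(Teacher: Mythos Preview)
Your overall strategy matches the paper's: obtain a $\bar P$-independent bound on $\|\nabla\psi\|_{L^\infty(\overline{\mathcal O})}$ and then invoke $\mathfrak t_c(\psi)\gtrsim\bar P$ from \eqref{tc_bound}. The free-boundary piece ($|\nabla\psi|=\Lambda_\infty\leq CQ$) and the observation that the inhomogeneous term $\partial_zG_\epsilon$ carries a $\bar P$-independent bound (via \eqref{thm_condition2}, so that the factor $\bar P$ in \eqref{k0_BS} is absorbed by $\kappa_1\leq\bar\kappa/\bar P$) are both correct and are exactly the ingredients the paper uses.

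The gap is your boundary argument on the rigid walls $\mathcal N_0,\mathcal N_1$. At this stage $\psi$ solves only the \emph{truncated} equation \eqref{eq_limiting_sol}; the Bernoulli relation \eqref{BS relation} (equivalently $\rho=1/g(|\nabla\psi|^2,\psi)$) is a property of the genuine Euler stream equation, which is recovered only \emph{after} \eqref{subsonic} is established---so invoking it here is circular. Even if one grants \eqref{BS relation}, along $\mathcal N_1$ it yields only $|\nabla\psi|^2=\mathfrak t(\rho,Q)$ with $\rho$ undetermined, and the maximum of $\mathfrak t(\cdot,Q)$ is $\mathfrak t_c(Q)\sim\bar P$ by \eqref{tc_bound}; hence the resulting bound would not be $\bar P$-independent. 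The assertion that conservation of $\mathcal B,\mathcal S$ ``expresses $|\nabla\psi|$ in terms of upstream data'' is therefore false: it pins down $\mathcal B(Q),\mathcal S(Q)$ but leaves $\rho$ (and thus $|\nabla\psi|$) free to vary along the wall.

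The paper bypasses any boundary/interior decomposition. It writes the truncated equation in non-divergence form $a_\epsilon^{ij}\partial_{ij}\psi=F_\epsilon$, records the $\bar P$-independent structural bounds $1\leq\beta_{1,\epsilon}/\beta_{0,\epsilon}\leq C\epsilon^{-1}$ and $|F_\epsilon/\beta_{0,\epsilon}|\leq C\epsilon^{-1}\kappa_0\bar\rho^*$, and then invokes the global $C^1$ estimate of Xie--Xin (\cite[Proposition~3]{MR2607929}), whose inputs are precisely these ellipticity and source bounds together with $0\leq\psi\leq Q\sim\bar\rho^*$ and the nozzle geometry. This yields $\|\psi\|_{C^1(\mathcal O)}\leq C(1+(1+\kappa_0)\bar\rho^*)$ directly, without any appeal to streamline conservation. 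If you wish to repair your route, replace the wall step by a barrier argument: since $\psi$ is constant on each wall and the coefficients and right-hand side of \eqref{eq_limiting_sol} are uniformly controlled independently of $\bar P$, a linear barrier gives the required $\bar P$-independent normal-derivative bound on $\mathcal N_0\cup\mathcal N_1$.
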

\begin{proof}
	In the flow region $\mathcal O:=\Omega\cap\{\psi<Q\}$, $\psi$ satisfies the following equation of non-divergence form
	\begin{equation*}\label{eqnondiv}
		a_\epsilon^{ij}(\nabla\psi,\psi) \p_{ij}\psi=F_\epsilon(|\nabla \psi|^2, \psi),
	\end{equation*}
	where the matrix
	$$(a_\epsilon^{ij})=g_\epsilon(|\nabla \psi|^2, \psi)I_2+2\p_t g_\epsilon(|\nabla \psi|^2, \psi)\nabla\psi\otimes \nabla \psi$$
	is symmetric with the eigenvalues
	\begin{align*}
		\beta_{0,\epsilon}  =g_\epsilon(|\nabla \psi|^2, \psi)\quad \text{and}\quad
		\beta_{1,\epsilon} =g_\epsilon(|\nabla \psi|^2, \psi)+2\p_t g_\epsilon(|\nabla \psi|^2, \psi)|\nabla\psi|^2,
	\end{align*}
	and 
	$$F_\epsilon(|\nabla \psi|^2, \psi)=-\p_z g_\epsilon(|\nabla \psi|^2, \psi) |\nabla\psi|^2+\p_zG_\epsilon(|\nabla \psi|^2, \psi).$$
	
	It follows from Lemma \ref{g properties}, \eqref{eq:upper_pzzG1} and \eqref{rho*_*}-\eqref{label_4} that there exist constants $C_1=C_1(\gamma)>1$ and $C_2=C_2(\g,\bar u_*)>0$ such that
	\begin{equation}\label{subsonic_eq}
		1\leq \frac{\beta_{1,\epsilon}}{\beta_{0,\epsilon}}\leq C_1 \epsilon^{-1}
		\quad\text{and}\quad
		\left|\frac{F_\epsilon}{\beta_{0,\epsilon}}\right|\leq C_2\v^{-1}\k_0\bar\rho^*.
	\end{equation}
	Note that $Q\sim\bar\rho^*$ by \eqref{Q_rho}.
	With \eqref{subsonic_eq} at hand, one can use similar arguments as in \cite[Proposition 3]{MR2607929} to get
	\begin{equation}\label{eq:subsonic}
		\|\psi\|_{C^1(\mathcal O)}\leq C\bigg(1+Q+\Big\|\frac{F_\epsilon}{\beta_{0,\epsilon}}\Big\|_{L^{\infty}(\mathbb R\times[0,Q])}\bigg)\leq C\left(1+\left(1+\k_0\right)\bar\rho^*\right)
	\end{equation}
	where $C=C(\bar u, \gamma,\epsilon,\N_1)$. This together with the lower bound of $\t_c$ in \eqref{tc_bound} and \eqref{rho*_*} yields \eqref{subsonic}, provided $\bar P=\bar P(\bar\rho_*, \bar u, \g,\v,\N_1)$ is sufficiently large.
	\end{proof}
\begin{remark}\label{rmk_subsonic_P}
Note that the upstream density $\bar\rho$ and the upstream pressure $\bar P$ of the flow need satisfy \eqref{thm_condition2} with $\bar\k=\bar\k(\bar\rho_*, \bar u, \g,\bar H)>0$, cf. Section \ref{Remove the domain truncation}. For fixed $\v\in(0,1/4)$, if 
\begin{equation}\label{rhobar_condition_v}
\|\bar\rho'\|_{C^{0,1}([0,\bar H])}<\frac{\bar\k}{\bar P^*},
\end{equation}
where $\bar P^*$ is defined in Proposition \ref{subsonic pro}, then \eqref{subsonic} holds as long as the upstream pressure $\bar P$ satisfies 
\begin{equation}\label{pbar_bound}
\bar P^*<\bar P< \hat P:=\frac{\bar\k}{\|\bar\rho'\|_{C^{0,1}([0,\bar H])}}
\end{equation}
(without loss of generality we assume $\|\bar\rho'\|_{C^{0,1}([0,\bar H])}>0$). Therefore, by \eqref{dzG_subsonic} the limiting solution obtained in Section \ref{Remove the domain truncation} is exactly a solution of  \eqref{jet equ}. 
\end{remark}

\subsection{Asymptotic behavior of the jet flow.}\label{Asymptotic behavior}
The solution obtained in Section \ref{Remove the domain truncation} has the following asymptotic behavior as $x_1\to\pm\infty$.

\begin{proposition}\label{prop:asymptotic}
Let the nozzle boundary $\N_1$ defined in \eqref{nozzle} satisfy \eqref{nozzleb}. Given a density $\bar\rho\in C^{1,1}([0,\bar H])$, a horizontal velocity $\bar u\in C^{1,1}([0,\bar H])$, and
a pressure $\bar P>\bar P_*$ for $\bar P_*$ defined in \eqref{Pbar_*}, such that \eqref{thm_condition1}-\eqref{thm_condition2} hold with the constant $\bar\k=\bar\k(\bar\rho_*,\bar u,\g,\bar H)$ sufficiently small. Let $\psi$ be a solution of \eqref{eq_limiting_sol} with $\Lambda:=\Lambda_\infty>0$. Assume that $\psi$ and the free boundary $\Gamma_\psi$ satisfy the properties (ii)-(v) in Section \ref{Remove the domain truncation} and \eqref{subsonic}. Then for any $\a\in(0,1)$, as $x_1\rightarrow -\infty$,
	\begin{equation}\label{eqbarpsi}
		\psi(x_1,x_2)\rightarrow \bar\psi(x_2):=\int_0^{x_2}(\bar\rho\bar u)(s)ds, \quad\text{in } C_{loc}^{2,\alpha}([0,\bar H));
	\end{equation}
	as $x_1\rightarrow \infty$,
	\begin{equation}\label{ubarpsi}
		\psi(x_1,x_2)\to\ubar\psi(x_2):=\int_0^{x_2}(\ubar\rho\ubar u)(s)ds, \quad\text{in }C_{loc}^{2,\alpha}([0,\ubar H)),
	\end{equation}
	where $\ubar\rho\in C^{1,\alpha}([0,\ubar H])$ is the positive downstream density, $\ubar u\in C^{1,\alpha}([0,\ubar H])$ is the positive downstream horizontal velocity, and $\ubar H>0$ is the downstream asymptotic height. Furthermore, the downstream states $(\ubar{\rho},\ubar{u},\ubar H)$ are uniquely determined by $\bar P$, $\bar\rho$, $\bar u$, $\gamma$, $\epsilon$, and $\Lambda$.
\end{proposition}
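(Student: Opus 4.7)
The plan is to combine uniform elliptic estimates for the quasilinear equation in \eqref{eq_limiting_sol} with a translation-compactness argument, and to identify each subsequential limit via a one-dimensional reduction together with uniqueness of the resulting Bernoulli-entropy system.

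For the upstream asymptotic, I would fix $R>0$ and consider the shifted functions $\psi_n(x_1,x_2):=\psi(x_1-n,x_2)$ on the slab $D_R:=(-R,R)\times(0,\bar H)$ as $n\to\infty$. By \eqref{nozzleb}, the portion of $\N_1$ inside the shifted slab converges $C^{1,\bar\alpha}$ to the horizontal segment $\{x_2=\bar H\}$. The subsonicity bound \eqref{subsonic}, Lemma \ref{g properties}, and the $C^{1,1}$-regularity of $\mathcal{B},\mathcal{S}$ from Proposition \ref{prop:BS_def} make \eqref{eq_limiting_sol} uniformly elliptic with regular coefficients, so standard Schauder theory yields uniform $C^{2,\alpha}$-estimates for $\psi_n$ on $D_R$ up to $\{x_2=0\}\cup\N_1$. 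Arzel\`a-Ascoli extracts a subsequence $\psi_{n_k}$ converging in $C^{2,\alpha'}_{loc}$ to some $\bar\psi_\infty$. The identity $\psi_n(x_1+c,x_2)=\psi_{n-c}(x_1,x_2)$, valid for every fixed $c$, forces $\bar\psi_\infty$ to be $x_1$-independent, hence a function of $x_2$ only on $[0,\bar H)$. It then solves the one-dimensional form of \eqref{eq_limiting_sol} with $\bar\psi_\infty(0)=0$ and $\bar\psi_\infty(\bar H)=Q$. Setting $(\rho_\infty,u_\infty,P_\infty)$ via \eqref{rup_psi} and noting $u_2\equiv 0$ in the limit, the $x_2$-momentum equation forces $P_\infty$ to be a constant, while Bernoulli and entropy reduce to $\mathcal{B}(\bar\psi_\infty)$ and $\mathcal{S}(\bar\psi_\infty)$. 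The map $P\mapsto \int_0^{\bar H}\rho u\,dx_2$, with $\rho,u$ determined by $P$ and these invariants, is strictly monotone on the subsonic branch, so $\bar\psi_\infty(\bar H)=Q$ pins down $P_\infty=\bar P$; consequently $(\rho_\infty,u_\infty)=(\bar\rho,\bar u)$ and $\bar\psi_\infty=\bar\psi$. Since the argument applies to every subsequence, \eqref{eqbarpsi} follows.

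The downstream analysis proceeds identically with $\psi_n(x_1,x_2):=\psi(x_1+n,x_2)$, now on slabs capped above by the free boundary $\{x_2=f(x_1+n)\}$. Property (v) in Section \ref{Remove the domain truncation} gives $f(x_1+n)\to\ubar H$ and $f'(x_1+n)\to 0$, so the upper boundary becomes $\{x_2=\ubar H\}$ in the limit and both $\psi=Q$ and $|\nabla\psi|=\Lambda_\infty$ pass to $\ubar\psi_\infty(\ubar H)=Q$ and $|\ubar\psi'_\infty(\ubar H)|=\Lambda_\infty$. As before $\ubar\psi_\infty$ depends only on $x_2\in[0,\ubar H)$ and solves the 1D equation. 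Evaluating Bernoulli and entropy at $x_2=\ubar H$ yields exactly the pair of relations \eqref{Ld} with $\rho_0=\ubar\rho(\ubar H)$ and $\ubar P$ the asymptotic pressure; strict monotonicity of $\rho\mapsto \rho\mathfrak q(\rho,Q)$ on the subsonic branch $\rho\in(\varrho_c(Q),\varrho_m(Q)]$ determines $\rho_0$ and hence $\ubar P$ uniquely from $\Lambda_\infty$. With $\ubar P$ fixed, pointwise Bernoulli-entropy yields $(\ubar\rho(x_2),\ubar u(x_2))$; integrating $\ubar\psi'=\ubar\rho\ubar u$ from $\ubar\psi(0)=0$ recovers $\ubar\psi$; and $\ubar\psi(\ubar H)=Q$, i.e.\ the prescribed mass flux, uniquely determines $\ubar H$. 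The $C^{1,\alpha}$-regularity of $(\ubar\rho,\ubar u)$ then follows from the $C^{1,1}$-regularity of $\mathcal{B},\mathcal{S}$.

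The hardest step will be the downstream one: $\ubar H$ is not known a priori and the upper lateral boundary is a free boundary, so one must carefully propagate $C^{2,\alpha}$-estimates up to the smooth portion of $\Gamma_\psi$ (available by Proposition \ref{prop:smooth_fit} and the regularity results following it) and pass the free-boundary condition $|\nabla\psi|=\Lambda_\infty$ to the limit using only the $C^1$-convergence $f'(x_1)\to 0$. The algebraic closure of the downstream state --- that $\Lambda_\infty$ together with the streamline invariants $\mathcal{B}(Q),\mathcal{S}(Q)$ determines $\ubar P$ --- rests on strict monotonicity of $\rho\mapsto\rho\mathfrak q(\rho,Q)$ on the subsonic branch, which is precisely what the subsonic truncation $\varpi_\epsilon$ from Section \ref{subsec_subsonic truncation} was designed to enforce.
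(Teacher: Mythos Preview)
Your overall strategy---translation compactness, uniform Schauder estimates, reduction to a one-dimensional boundary value problem, and identification of the limit via the Bernoulli--entropy invariants---is the same as the paper's (which defers to \cite{LSTX2023}), and the downstream closure you describe is correct.

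There is, however, a genuine gap in the step where you deduce $x_1$-independence of the subsequential limit. The shift identity $\psi_n(x_1+c,x_2)=\psi_{n-c}(x_1,x_2)$ alone does \emph{not} force $\bar\psi_\infty$ to be independent of $x_1$: knowing that $\psi_{n_k}\to\bar\psi_\infty$ tells you nothing about the limit of $\psi_{n_k-c}$ unless you already know the full sequence converges. Without additional input one could have different subsequential limits, each genuinely depending on $x_1$. The missing ingredient is precisely property (ii) from Section~\ref{Remove the domain truncation}, namely $\partial_{x_1}\psi\geq 0$. Monotonicity plus the uniform bound $0\leq\psi\leq Q$ gives that $\psi(x_1,x_2)$ has a pointwise limit as $x_1\to-\infty$ for every fixed $x_2$; this pointwise limit is automatically $x_1$-independent, and every subsequential $C^{2,\alpha'}$-limit must coincide with it. The same remark applies verbatim to the downstream side. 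Once this is fixed, your identification of the upstream limit via the constancy of the limiting pressure and the monotonicity of $P\mapsto\int_0^{\bar H}\rho u\,dx_2$ on the subsonic branch is a legitimate (and somewhat more physical) alternative to the direct uniqueness argument for the one-dimensional elliptic boundary value problem used in \cite{LSTX2023}; just be sure to record that this monotonicity follows from $\partial_\rho\mathfrak t(\rho,z)<0$ on $(\varrho_c(z),\varrho_m(z)]$, cf.\ the proof of Lemma~\ref{g}.
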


	\begin{remark}
	Under the assumptions of Proposition \ref{prop:asymptotic}, we refer from Proposition \ref{prop:equiv_sol} and Remark \ref{rmk_p_constant} that the downstream pressure $\ubar P$, which is defined by
	\begin{equation*}
		\ubar P=\frac{(\gamma-1)\S(\ubar\psi)}{\gamma g(|\n\ubar\psi|^2,\ubar\psi)^\gamma},
	\end{equation*}
	must be a constant.
	Thus using \eqref{Spsi} one has
	$$\ubar P=\frac{(\g-1)\ubar\rho^\g(\ubar H)\S(Q)}{\g}.$$
	\end{remark}

From Propositions \ref{prop:asymptotic}, we get the asymptotics for the solution obtained in Section \ref{Remove the domain truncation}. As shown in \cite[Proposition 7.2]{LSTX2023}, one has $\p_{x_1}\psi_\infty>0$ in the flow region. Thus in view of Proposition \ref{subsonic pro}, we have proved the existence of solutions to {Problem} \ref{jet problem} when the upstream pressure $\bar P$ is sufficiently large.
Then the existence of solutions to Problem \ref{problem1} follows from Proposition \ref{prop:equiv_sol}.

\subsection{Uniqueness of the jet flow.}
The following proposition gives the uniqueness of the solution to the jet problem. For the proof we refer to \cite[Proposition 8.1]{LSTX2023}.

\begin{proposition}\label{uniqueness pro}
		Let the nozzle boundary $\N_1$ defined in \eqref{nozzle} satisfy \eqref{nozzleb} and \eqref{nozzle3}.
		Given a density $\bar\rho$, a horizontal velocity $\bar u$, and a pressure $\bar P$ of the flow at the upstream as in Proposition \ref{prop:asymptotic}.
		If $(\psi_i, \Gamma_i, \Lambda_i)$ ($i=1$, $2$) are two uniformly subsonic solutions to Problem \ref{jet problem}, then they must be the same.
\end{proposition}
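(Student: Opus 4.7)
The plan is to reduce uniqueness to a moving-plane (sliding) argument in the $x_1$-direction, exploiting the upstream and downstream asymptotics of Proposition \ref{prop:asymptotic}, the elliptic structure of \eqref{EL equ} inherited from Proposition \ref{Gproperties}, and the nozzle monotonicity \eqref{nozzle3}. Given two uniformly subsonic solutions $(\psi_1,\Gamma_1,\Lambda_1)$ and $(\psi_2,\Gamma_2,\Lambda_2)$, I would argue by contradiction. Without loss of generality, assume $\Lambda_1\le \Lambda_2$. Since both solutions share the same upstream data $(\bar\rho,\bar u,\bar P)$ and by Proposition \ref{prop:asymptotic} the downstream asymptotic states $(\ubar\rho,\ubar u,\ubar H)$ are uniquely determined by this data (together with $\gamma$), the downstream momenta on the free boundary must agree as well; hence an eventual strict inequality $\Lambda_1<\Lambda_2$ will be ruled out by a boundary contact analysis.

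First I would introduce the horizontal translations $\psi_1^\tau(x_1,x_2):=\psi_1(x_1-\tau,x_2)$, defined on the correspondingly shifted nozzle. Because the upper nozzle wall $\N_1$ satisfies \eqref{nozzle3}, so is monotone nonincreasing in $x_2\in(h_*,\bar H)$, translation to the right pushes the shifted nozzle wall \emph{outside} the flow region of $\psi_2$ in the unbounded upstream part, so that for $\tau$ sufficiently large the flow domain $\{\psi_1^\tau<Q\}$ is strictly contained in $\{\psi_2<Q\}$ in a uniform sense. Combining this with the upstream convergence \eqref{eqbarpsi} and downstream convergence \eqref{ubarpsi} applied to both $\psi_1^\tau$ and $\psi_2$, I would deduce that for all large $\tau$ one has $\psi_1^\tau\ge\psi_2$ globally. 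Then I would define
\begin{equation*}
\tau_*:=\inf\{\tau_0\ge 0:\ \psi_1^\tau\ge \psi_2 \text{ in the common flow region for all } \tau\ge \tau_0\},
\end{equation*}
and the goal becomes proving $\tau_*=0$.

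The heart of the argument is the analysis at $\tau=\tau_*$. Writing $w:=\psi_1^{\tau_*}-\psi_2$, subtracting the nondivergence form of \eqref{EL equ} for $\psi_1^{\tau_*}$ and $\psi_2$, and linearizing using the smoothness and strong convexity of $\MG$ from Proposition \ref{Gproperties} (items (i) and (iii)), $w$ satisfies a linear, uniformly elliptic equation of the form $a^{ij}\partial_{ij}w+b^i\partial_i w+c\,w\ge 0$, where the bounds \eqref{eq:upper_pzzG1}--\eqref{eq:upper_pzzG2} and the smallness condition \eqref{thm_condition2} on $\k_1$ ensure that standard maximum principle and Hopf-type statements apply (the coefficient $c$ is absorbed by the smallness). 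Since $w\ge 0$ and $w$ attains the value $0$ somewhere in the closure of the common flow region, I distinguish three cases: (a) contact at an interior point of $\{\psi_2<Q\}\cap\{\psi_1^{\tau_*}<Q\}$, which by the strong maximum principle forces $w\equiv 0$; (b) contact on the rigid boundary $\N_0\cup\N_1^{\tau_*}$, which is ruled out by the Hopf lemma using that $w$ vanishes on the boundary in the unshifted case but has a prescribed positive gap after shifting (or is again handled by the same touching); (c) contact at a free-boundary point, where the free boundary condition in \eqref{jet equ} and the Hopf lemma give $\Lambda_1=|\nabla\psi_1^{\tau_*}|>|\nabla\psi_2|=\Lambda_2$ at the contact point, contradicting $\Lambda_1\le\Lambda_2$ unless $\tau_*=0$ and the free boundaries coincide.

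With $\tau_*=0$, the same analysis yields $\psi_1\ge\psi_2$; reversing the roles of the two solutions gives $\psi_2\ge\psi_1$, hence $\psi_1\equiv\psi_2$, $\Gamma_1=\Gamma_2$ and $\Lambda_1=\Lambda_2$. The main obstacle is case (c), the free-boundary contact analysis: the free boundary condition in the weak sense of Lemma \ref{free BC} must be upgraded to a pointwise statement at the contact point (which is available since both solutions have $C^{2,\alpha}$ free boundaries in their flow regions by Section \ref{sec:truncated problem}), and the Hopf lemma must be applied to the linearized operator at a free boundary point, which requires an interior touching ball; this is where the monotonicity condition \eqref{nozzle3} enters critically, since otherwise the sliding argument could fail at an upstream contact on the rigid wall before reaching a meaningful free-boundary contact. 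A secondary, more technical obstacle is justifying the comparison at infinity upstream and downstream, which requires combining the asymptotics \eqref{eqbarpsi}--\eqref{ubarpsi} with uniform estimates in $x_1$ on strips, but these follow as in \cite[Section 8]{LSTX2023}.
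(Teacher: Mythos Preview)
Your plan is the same sliding-in-$x_1$ method that the paper defers to (\cite[Proposition~8.1]{LSTX2023}), and the overall architecture---translate one solution, compare via the maximum principle, then analyze the critical shift $\tau_*$ via the strong maximum principle and Hopf lemma at interior, nozzle-wall, and free-boundary contact points---is correct.

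There is, however, a direction error that would make the argument fail as written. With $\psi_1^\tau(x_1,x_2)=\psi_1(x_1-\tau,x_2)$ and $\tau>0$, the shifted upper boundary $\N_1^\tau\cup\Gamma_1^\tau$ lies to the \emph{right} of $\N_1\cup\Gamma_2$ for large $\tau$ (by \eqref{nozzle3} and the $x_2$-graph property of the free boundary), so in fact $\{\psi_2<Q\}\subset\{\psi_1^\tau<Q\}$, the opposite of what you claim. On $\N_1\cup\Gamma_2$ one then has $\psi_1^\tau<Q=\psi_2$, and the comparison principle yields $\psi_1^\tau\le\psi_2$, not $\psi_1^\tau\ge\psi_2$. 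Consequently, at a free-boundary contact the Hopf lemma gives $\Lambda_1<\Lambda_2$, which does \emph{not} contradict your WLOG assumption $\Lambda_1\le\Lambda_2$. To make case~(c) produce a contradiction you must assume instead $\Lambda_1\ge\Lambda_2$ (or swap the roles of $\psi_1$ and $\psi_2$); once this is corrected, the outline proceeds as in \cite{LSTX2023}. A related slip: contrary to your opening remark, Proposition~\ref{prop:asymptotic} states that the downstream asymptotics depend on $\Lambda$ in addition to the upstream data, so you cannot infer $\Lambda_1=\Lambda_2$ from far-field behavior alone---this equality has to come out of the sliding analysis itself.
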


\section{Existence of the critical pressure}\label{Existence of critical flux}

For given density $\bar\rho$, horizontal velocity $\bar u$, and pressure $\bar P$ of the flow at the upstream as in Section \ref{Remove the domain truncation}, we have shown that there exist subsonic jet  flows as long as $\bar P>\bar{P}^*$ for some $\bar{P}^*$ sufficiently large. In this section, we decrease $\bar{P}^*$ as small as possible until some critical pressure $\bar{P}_c$, such that for $\bar{P}\leq\bar{P}_c$ either the flow fails to be global subsonic or the stream function $\psi$ fails to be a solution in the sense of Problem \ref{jet problem}.

\begin{proposition}
Let the nozzle boundary $\N_1$ defined in \eqref{nozzle} satisfy \eqref{nozzleb} and \eqref{nozzle3}.
Given a density $\bar\rho\in C^{1,1}([0,\bar H])$ and a horizontal velocity $\bar u\in C^{1,1}([0,\bar H])$ satisfying \eqref{thm_condition1}. Assume that $\bar\rho$ satisfies \eqref{rhobar_condition_v}.  
There exists a critical pressure $\bar P_c\in[\bar P_*,\bar P^*]$ such that if $\bar P\in(\bar P_c,\hat P)$, where $\bar P_*$ is defined in \eqref{Pbar_*}, $\bar P^*$ is defined in Proposition \ref{subsonic pro} and $\hat P$ is defined in \eqref{pbar_bound}, then there exists a unique solution $\psi$ to Problem \ref{jet problem}, which satisfies
\begin{equation}\label{Eellipticity}
0< \psi< Q \ \text{ in }\mathcal O\quad\text{and}\quad
\mathfrak M(\bar P):=\sup_{\overline{\mathcal O}}\frac{|\nabla\psi|^2}{\t_c(\psi)}<1,
\end{equation}
where the domain $\mathcal O$ is bounded by $\N_0$, $\N_1$, and $\Gamma$. Furthermore, either $\mathfrak M(\bar P)\rightarrow 1$ as $\bar P\rightarrow \bar P_c+$, or there does not exist a $\sigma>0$ such that Problem \ref{jet problem} has solutions for all $\bar P\in (\bar P_c-\sigma,\bar P_c)$ and
	\begin{equation}\label{Ebifurcationestimate}
		\sup_{\bar P\in(\bar P_c-\sigma,\bar P_c)}\mathfrak M(\bar P)<1.
	\end{equation}
\end{proposition}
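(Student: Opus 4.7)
The plan is to define $\bar P_c$ as an infimum of admissible upstream pressures and then establish the dichotomy by contradiction via a compactness-and-continuation argument.

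Set
\[
\Sigma := \bigl\{P \in (\bar P_*, \hat P) : \text{for every } P' \in [P, \hat P) \text{ Problem \ref{jet problem} has a solution with } \mathfrak M(P')<1 \bigr\},
\]
and let $\bar P_c := \inf \Sigma$. Section \ref{Remove the domain truncation} together with Proposition \ref{subsonic pro} shows $[\bar P^*, \hat P) \subset \Sigma$, so $\Sigma \neq \emptyset$ and $\bar P_c \leq \bar P^*$. The inclusion $\Sigma \subset (\bar P_*, \hat P)$ forces $\bar P_c \geq \bar P_*$; this cutoff is natural because for $\bar P \leq \bar P_*$ the upstream asymptotic \eqref{asymptotic rup} together with the definition of $\bar P_*$ in \eqref{Pbar_*} would produce a sonic or supersonic upstream point, contradicting $\mathfrak M(\bar P)<1$. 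Thus $\bar P_c \in [\bar P_*, \bar P^*]$.

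For any $\bar P \in (\bar P_c, \hat P)$, the infimum definition yields $P_0 \in \Sigma$ with $\bar P_c \leq P_0 < \bar P$, and membership in $\Sigma$ then furnishes a subsonic solution at pressure $\bar P$. Uniqueness is Proposition \ref{uniqueness pro}. The strict bound $0 < \psi < Q$ in $\MO$ comes from the strong maximum principle for the uniformly elliptic equation \eqref{jet equ}$_1$ (elliptic because $\mathfrak M(\bar P)<1$) together with the boundary data $\psi=0$ on $\N_0$ and $\psi=Q$ on $\N_1\cup\G_\psi$.

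For the dichotomy I would argue by contradiction: suppose both that there exist $\sigma_0 \in (0,1)$ and a sequence $\bar P_n \to \bar P_c+$ with $\mathfrak M(\bar P_n) \leq 1-\sigma_0$, and that there is $\sigma>0$ such that Problem \ref{jet problem} has solutions for all $\bar P\in(\bar P_c-\sigma,\bar P_c)$ satisfying $\sup_{\bar P\in(\bar P_c-\sigma,\bar P_c)}\mathfrak M(\bar P)<1$. Using the uniform ellipticity from $\mathfrak M(\bar P_n)\leq 1-\sigma_0$, the Lipschitz and $C^{2,\alpha}$ estimates and the nondegeneracy/non-oscillation properties of Section \ref{sec:truncated problem}, and the far-field analysis of Section \ref{Asymptotic behavior}, I extract a subsequential limit $\psi_\infty$ (in $C^{2,\alpha}_{\rm loc}$, with the free boundary converging in $C^{2,\alpha}_{\rm loc}$) solving Problem \ref{jet problem} at upstream pressure $\bar P_c$ with $\mathfrak M(\bar P_c)\leq 1-\sigma_0<1$. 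Combining this with the hypothesized solutions below $\bar P_c$, every $P'\in[\bar P_c-\sigma/2,\hat P)$ admits a subsonic solution, so $\bar P_c-\sigma/2\in\Sigma$, contradicting $\bar P_c=\inf\Sigma$.

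The main obstacle is this compactness step. All relevant quantities ($\B$, $\S$, $Q$, $\Ld$, and the bounds in Proposition \ref{Gproperties}) depend continuously on $\bar P$, so the uniform bound $\mathfrak M(\bar P_n)\leq 1-\sigma_0$ yields estimates uniform in $n$. The delicate points are: (a) preservation of the smooth fit at $A=(0,1)$ in the limit, which requires the $n$-uniform version of Proposition \ref{prop:smooth_fit}; (b) convergence of the downstream asymptotic heights $\ul H_n$ to a positive limit $\ul H_\infty<1$, relying on the monotonicity condition \eqref{nozzle3} and the uniqueness part of Proposition \ref{prop:asymptotic}; and (c) passing to the limit in the upstream asymptotics \eqref{asymptotic rup} as $\bar P_n\to\bar P_c$. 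Each of these is parallel to constructions already carried out in Sections \ref{Remove the domain truncation}--\ref{Asymptotic behavior}, so the proof reduces to verifying the continuous dependence of those constructions on $\bar P$.
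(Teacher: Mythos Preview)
Your argument is correct and reaches the same conclusion, but the organization differs from the paper's. The paper does not define $\bar P_c$ directly as the infimum of a single admissible set; instead it runs a nested family of subsonic truncations $\epsilon_n\downarrow 0$, defines for each $n$ a set $\mathfrak T_n$ of pressures above which the $\epsilon_n$-truncated problem stays in the region $\mathfrak M_n\leq 1-\epsilon_n$, sets $\bar P_n:=\inf\mathfrak T_n$, and then takes $\bar P_c:=\lim_n \bar P_n$. The compactness step that you isolate in the dichotomy is, in the paper, absorbed into the proof that each $\mathfrak M_n$ is right-continuous on $[\bar P_n,\hat P)$: this is clean because the $\epsilon_n$-truncated equation is uniformly elliptic with constants independent of $\bar P$, so the $C^{2,\alpha}$ and free-boundary estimates of Section \ref{sec:truncated problem} apply automatically. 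Your direct route avoids the auxiliary sequence $\{\bar P_n\}$ but must instead recover the same uniform estimates from the hypothesis $\mathfrak M(\bar P_n)\leq 1-\sigma_0$, which (as you note) requires identifying the unique subsonic solution with the local minimizer so that the variational machinery is available; this is legitimate via Proposition \ref{uniqueness pro} and Section \ref{Remove the domain truncation}(i). In short, the paper front-loads compactness into a right-continuity lemma at each fixed truncation level, while you defer it to a single limiting argument at $\bar P_c$; both work, and your contradiction structure is arguably a bit more transparent about what the negation of $\mathfrak M(\bar P)\to 1$ actually provides.
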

\begin{proof}
	The proof is basically the same as in \cite[Proposition 9.1]{LSTX2023}. For completeness we give the details.
	
Let $\{\v_n\}_{n=1}^{\infty}$ be a strictly decreasing sequence of positive numbers such that $\v_n\downarrow0$ and $\v_1\leq\v$ which is used for the subsonic truncations in Section \ref{subsec_subsonic truncation}.
Let $\psi_n(\c;\bar P)$ be the function (if it exists) such that
\begin{itemize}
	\item [(i)] $\psi_n(x; \bar P)\in  C^{2,\alpha}(\{\psi_n(x;\bar P)<Q\})\cap C^{1}(\overline{\{\psi_n(x;\bar P)<Q\}})$ satisfies
\begin{eqnarray}\label{system1}\begin{cases}
		\n\c(g_{\v_n}(|\n\psi|^2,\psi)\n\psi)-\p_zG_{\v_n}(|\n\psi|^2,\psi)=0 &\text{in } \{\psi<Q\},\\
		\psi=0 &\text{on }\N_0,\\
		\psi=Q &\text{on }\N_1\cap \G_\psi,\\
		|\n\psi|=\Ld &\text{on }\G_\psi,
	\end{cases}
\end{eqnarray}
	where the mass flux $Q$ is defined in \eqref{def:Q}, the functions $g_{\epsilon_n}$ and $G_{\epsilon_n}$ are defined in \eqref{g modified} and \eqref{G}, respectively;
	\item [(ii)] $\psi_n(x; \bar P)$ satisfies $\partial_{x_1}\psi_n(x;\bar P)>0$ in $\{\psi_n(x;\bar P)<Q\}$;
	\item [(iii)]  the free boundary $\Gamma_{\psi_n}$ satisfies the properties (1)-(3) in Problem \ref{jet problem}; furthermore, there exist constants $C_1=C_1(\epsilon_n, \bar P, \tau)$ and $C_2=C_2(\epsilon_n, \bar P, a)$ (with $\tau$, $a>0$) depending on $\bar P$ continuously such that
	\begin{equation}\label{eq9.3.5}
		\|\Upsilon\|_{C^{2, \alpha}([\ubar{H}+\tau, 1-\tau ])}\leq C_1(\epsilon_n, \bar P, \tau) \quad \text{and}\quad \|f\|_{C^{2,\alpha}((a, \infty))}\leq C_2(\epsilon_n, \bar P, a),
	\end{equation}
	where $\Upsilon$ and $f$ are the functions appeared in the properties (1) and (3) in Problem \ref{jet problem}, respectively.
\end{itemize}
Moreover, if
$$\frac{|\n\psi_n(x;\bar P)|^2}{\t_c(\psi_n(x;\bar P))}\leq1-\v_n,$$
then $\psi_n(x;\bar P)$ solves \eqref{jet equ}, and it satisfies the far fields behavior claimed in Proposition \ref{prop:asymptotic} and
\begin{equation*}
	0\leq \psi_n(x;\bar P)\leq Q.
\end{equation*}
Set
$$\mathfrak{S}_n(\bar P):=\{\psi_n(x;\bar P):\psi_n(x;\bar P) \text{ satisfies the properties (i)--(iii)}\}.$$
Define
$$\M_n(\bar P):=\inf_{\psi_n\in\mathfrak S_n(\bar P)}\sup_{\overline\MO}\frac{|\n\psi_n(x;\bar P)|^2}{\t_c(\psi_n(x;\bar P))}$$
and
$$\mathfrak{T}_n:=\{s:s\geq \bar P_*,\, \M_n(\bar P)\leq1-\v_n \text{ if } \bar P\in(s,\hat P)\}.$$
It follows from Proposition \ref{subsonic pro} and Remark \ref{rmk_subsonic_P} that $[\bar P^*,\hat P)\subset\mathfrak T_n$ for sufficiently large $n$. Hence $\mathfrak T_n$ is not an empty set. Define $\bar P_n:=\inf\mathfrak T_n$.

The sequence $\{\bar P_n\}$ has some nice properties.

First, $\M_n(\bar P)$ is right continuous for $\bar P\in[\bar P_n,\hat P)$. Let $\{\bar P_{n,k}\}\subset(\bar P_n,\hat P)$ and $\bar P_{n,k}\downarrow \bar P$ as $k\to\infty$. Since $\M_n(\bar P_{n,k})\leq1-\v_n$, one has
$$\|\psi_n(x;\bar P_{n,k})\|_{C^{0,1}(\ov{\MO_{n,k}})}\leq C \q{\rm and}\q \|\psi_n(x;\bar P_{n,k})\|_{C^{2,\a}(\ov{\MO_{n,k}}\backslash B_r(A))}\leq C(r) \text{ for any }r>0,$$
where $\MO_{n,k}$ is the associated domain bounded by $\N_0$, $\N_1$ and the free boundary, and $B_r(A)$ is the disk centered at $A=(0,1)$ with radius $r$. Therefore, there exists a subsequence $\psi_n(x;\bar P_{n,k_m})$ such that $\psi_n(x;\bar P_{n,k_m})\to\psi$ and $\MO_{n,k_m}\to \MO$ as $m\to\infty$. Furthermore, together with the estimates \eqref{eq9.3.5}, $\psi$ satisfies the properties (i)--(iii) mentioned above. Thus $\M_n(\bar P)\leq \lim_{m\to\infty}\M_n(\bar P_{n,k_m})\leq1-\v_n$. It follows from Proposition \ref{uniqueness pro} that $\psi$ is unique. Hence $\M_n(\bar P)=\lim_{m\to\infty}\M_n(\bar P_{n,k_m})$.

Second, $\bar P_n>\bar P_*$. Suppose not, by the definition of $\bar P_n$ one has $\bar P_*\in\mathfrak T_n$. It follows from the right continuity of $\M_n$ that $\M_n(\bar P_*)\leq1-\v_n$. Thus $\psi_n(x;\bar P_*)$ satisfies the far field behavior claimed in Proposition \ref{prop:asymptotic}.
However, it follows from the definition of $\bar P_*$ that
$$\sup_{x\in\overline\MO}\frac{|\n\psi_n(x;\bar P_*)|^2}{\t_c(\psi_n(x;\bar P_*))}\geq \sup_{x_2\in[0,\bar H]}\frac{|(\bar\r\bar u)(x_2;\bar P_*)|^2}{\t_c(\bar\psi(x_2;\bar P_*))}=1.$$
Thus $\M_n(\bar P_*)\geq1$. This leads to a contradiction, which implies $\bar P_n>\bar P_*$.

Finally, it follows from the definition of $\{\bar P_n\}$ that $\{\bar P_n\}$ is an decreasing sequence.

Define $\bar P_c:=\lim_{n\to\infty}\bar P_n$. Based on previous properties of $\{\bar P_n\}$, $\bar P_c$ is well-defined and $\bar P_c\in [\bar P_*,\bar P^*]\subset [\bar P_*,\hat P)$. Note that for any $\bar P\in(\bar P_c,\hat P)$, there exists a $\bar P_n\in(\bar P_c,\bar P)$ and thus $\M_n(\bar P)\leq1-\v_n$. Therefore $\psi=\psi_n(x;\bar P)$ solves Problem \ref{jet problem}.

If $\M(\bar P)\to1$ does not hold as $\bar P\rightarrow \bar P_c+$, then 
$\sup_{\bar P\in(\bar P_c,\hat P)}\M(\bar P)<1$, and hence there exists $n$ such that $\sup_{\bar P\in(\bar P_c,\hat P)}\M(\bar P)<1-\v_n$. As the same as the proof for the right continuity of $\M_n(\bar P)$ on $[\bar P_n,\hat P)$, $\M(\bar P_c)\leq1-\v_n$. If in addition there exists $\sigma>0$ such that Problem \ref{jet problem} always has a solution $\psi$ for $\bar P\in(\bar P_c-\sigma,\bar P_c)$, and
$$\sup_{\bar P\in(\bar P_c-\sigma,\bar P_c)}\M(\bar P)=\sup_{\bar P\in(\bar P_c-\sigma,\bar P_c)}\sup_{\overline\MO}\frac{|\n\psi|^2}{\t_c(\psi)}<1.$$
Then there exists $k>0$ such that
$$\sup_{\bar P\in(\bar P_c-\sigma,\bar P_c)}\M(\bar P)=\sup_{Q\in(\bar P_c-\sigma,\bar P_c)}\sup_{\overline\MO}\frac{|\n\psi|^2}{\t_c(\psi)}\leq 1-\v_{n+k}.$$
This yields that $\bar P_{n+k}\leq \bar P_c-\sigma$ and leads to a contradiction. Therefore, either $\M(\bar P)\to1$ as $\bar P\rightarrow \bar P_c+$, or there does not exist $\sigma>0$ such that Problem \ref{jet problem} has solutions  for all $\bar P\in(\bar P_c-\sigma,\bar P_c)$ and the associated solutions satisfy \eqref{Ebifurcationestimate}.

This completes the proof of the proposition.
\end{proof}

Combining all the results in previous sections, Theorem \ref{result} is proved.

\bibliographystyle{abbrv}
	\bibliography{ref1}

\end{document}